\numberwithin{equation}{section}
\newtheorem{defn}{Definition}[section]
\newtheorem{theorem}{Theorem}[section]
\newtheorem{corollary}[theorem]{Corollary}
\newtheorem{lemma}[theorem]{Lemma}
\newtheorem{prop}[theorem]{Proposition}
\newtheorem{question}[theorem]{Question}
\newtheorem{remark}[theorem]{Remark}
\newtheorem{example}[theorem]{Example}
\newcommand{\frakm}{\mathfrak{m}}
\newcommand{\frakj}{\mathfrak{J}}
\newcommand{\ZZ}{\mathbb{Z}}
\DeclareMathOperator{\Hom}{Hom}
\DeclareMathOperator{\Ext}{Ext}
\DeclareMathOperator{\Dim}{dim}
\DeclareMathOperator{\GL}{GL}
\DeclareMathOperator{\Sym}{Sym}
\DeclareMathOperator{\Pf}{Pf}
\title{On the Generalized Multiplicities of Maximal Minors and Sub-Maximal Pfaffians}
\author{Jiamin Li}
\address{Department of Mathematics, Statistics, and Computer Science, University of Illinois at Chicago,
Chicago, IL 60607}
\email{jli283@uic.edu}
\subjclass[2020]{}
\begin{document}

\maketitle

\begin{abstract}
	Let $S=\mathbb{C}[x_{ij}]$ be a polynomial ring of $m\times n$ generic variables (resp. a polynomial ring of $(2n+1) \times (2n+1)$ skew-symmetric variables) over $\mathbb{C}$ and let $I$ (resp. $\Pf$) be the determinantal ideal of maximal minors (resp. sub-maximal pfaffians) of $S$. Using the representation theoretic techniques introduced in the work of Raicu et al, we study the asymptotic behavior of the length of the local cohomology module of determinantal and pfaffian thickenings for suitable choices of cohomological degrees. This asymptotic behavior is also defined as a notion of multiplicty. We show that the multiplicities in our setting coincide with the degrees of Grassmannian and Orthogonal Grassmannian.
\end{abstract}

\section{Introduction}
Let $S=\mathbb{C}[x_{ij}]_{m\times n}$ be a polynomial ring of $m \times n$ variables with $m \geq n$. When $[x_{ij}]_{m \times n}$ is a generic matrix and $m>n$, we denote the determinantal ideals generated by $p \times p$ minors by $I_p$. On the other hand if $[x_{ij}]_{m \times n}$ is a skew-symmetric matrix with $m=n$ then we denote the ideals generated by its $2p \times 2p$ pfaffians by $\Pf_{2p}$. Our goal in this paper is to study the generalized multiplicities of $I_n$ and $\Pf_{2n}$, which is also a study of asymtoptic behavior of the length of the local cohomology modules. The precise definition of the generalized multiplicities will be given later. Our main theorems are the following.

\begin{theorem}(Theorem \ref{formula_limit}\label{main_thm})
	Let $S = \mathbb{C}[x_{ij}]_{m\times n}$ where $m>n$ and $[x_{ij}]_{m\times n}$ is a generic matrix, then we have
	\begin{enumerate}
		\item If $j\neq n^2-1$, then $\ell(H^j_\frakm(S/I_n^D))$ and $\ell(H^j_\frakm(I_n^{d-1}/I_n^d))$ are either $0$ or $\infty$. 
		\item If $j=n^2-1$, then $\ell(H^j_\frakm(S/I_n^D))$ and $\ell(H^j_\frakm(I^{d-1}_n/I^d_n))$ are nonzero and finite. Moreover we have 
			\begin{align}\label{main_thm_formula}
				\begin{split}
				&\lim_{d \rightarrow \infty}\dfrac{(mn-1)!\ell(H^j_\frakm(I_n^{d-1}/I_n^d))}{d^{mn-1}} \\&= (mn)!\prod^{n-1}_{i=0}\dfrac{i!}{(m+i)!}
				\end{split}
			\end{align}
		\item In fact the limit $$\lim_{D\rightarrow \infty} \dfrac{(mn)!\ell(H^j_\frakm(S/I_n^D))}{D^{mn}}$$ is equal to (\ref{main_thm_formula}) as well.
	\end{enumerate}
\end{theorem}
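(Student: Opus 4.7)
The plan is to apply the representation-theoretic description of local cohomology of determinantal thickenings developed by Raicu and collaborators. The polynomial ring $S$ carries a natural $\GL_m \times \GL_n$-action, and under this action both $H^j_\frakm(S/I_n^d)$ and $H^j_\frakm(I_n^{d-1}/I_n^d)$ decompose into isotypic components indexed by pairs of partitions via Schur functors of the form $\mathbb{S}_\lambda \mathbb{C}^m \otimes \mathbb{S}_\mu \mathbb{C}^n$. This machinery provides an explicit combinatorial description of which $(\lambda,\mu)$ can appear for a given cohomological degree $j$ and thickening order $d$, together with their multiplicities.

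For part (1), recall that a graded $S$-module has finite length exactly when each of its isotypic summands is a finite-dimensional $\GL_m \times \GL_n$-representation. The combinatorial indexing shows that for $j \ne n^2 - 1$ the admissible $(\lambda,\mu)$ form an infinite family of unbounded-size partitions whenever the cohomology is nonzero, forcing the length to be $\infty$; only for the critical degree $j = n^2 - 1$ does the indexing family collapse to a finite set of contributing partitions, yielding finite length. Identifying this critical degree with $n^2 - 1$ is a direct consequence of the codimension and projective-dimension structure of the minors resolution.

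For part (2), using the identification of the finite family $\Lambda_d$ of contributing partitions, I would write
\[
\ell\bigl(H^{n^2-1}_\frakm(I_n^{d-1}/I_n^d)\bigr) \;=\; \sum_{\lambda \in \Lambda_d} c_\lambda \cdot \dim \mathbb{S}_\lambda \mathbb{C}^m \cdot \dim \mathbb{S}_\lambda \mathbb{C}^n,
\]
where the entries of $\lambda \in \Lambda_d$ scale linearly in $d$ and $c_\lambda$ is an explicit combinatorial multiplicity. Weyl's dimension formula expresses each summand as a rational polynomial in the parts of $\lambda$. Rescaling $\lambda = d \tilde{\lambda}$ turns the sum into a Riemann sum for an $n$-dimensional integral of Selberg type over a bounded simplex whose integrand is a Vandermonde-squared factor times a power weight. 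Evaluating this integral via the classical Selberg or Mehta--Dyson formula produces precisely $(mn)! \prod_{i=0}^{n-1} \frac{i!}{(m+i)!}$, which one recognizes as the degree of the Grassmannian $\mathrm{Gr}(n, m+n)$ in its Pl\"ucker embedding.

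Part (3) then follows by telescoping. The short exact sequence $0 \to I_n^{d-1}/I_n^d \to S/I_n^d \to S/I_n^{d-1} \to 0$ yields
\[
\ell\bigl(H^{n^2-1}_\frakm(S/I_n^D)\bigr) \;=\; \sum_{d=1}^{D} \ell\bigl(H^{n^2-1}_\frakm(I_n^{d-1}/I_n^d)\bigr),
\]
and summing the asymptotic $\ell(H^{n^2-1}_\frakm(I_n^{d-1}/I_n^d)) \sim c \cdot d^{mn-1}/(mn-1)!$ from part (2) produces $\ell(H^{n^2-1}_\frakm(S/I_n^D)) \sim c \cdot D^{mn}/(mn)!$ with the same constant $c$. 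The main obstacle in this program is the asymptotic evaluation in part (2): controlling the error in the Riemann-sum approximation uniformly in $d$, and evaluating the resulting Selberg-type integral in closed form so that it matches the Grassmannian degree formula. The bookkeeping needed to pin down $\Lambda_d$ and $c_\lambda$ from the Raicu--Weyman description will also require careful care, but the heart of the argument lies in the integral evaluation that realizes $\deg \mathrm{Gr}(n, m+n)$.
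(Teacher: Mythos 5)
Your overall approach matches the paper's: both pass to the $\GL$-equivariant Schur decomposition of the relevant (co)homology modules (the paper works with $\Ext^{mn-j}_S(-,S)$ via local duality, since the Raicu--Weyman decompositions are stated for $\Ext$), both identify $j=n^2-1$ (equivalently $\Ext$-degree $n(m-n)+1$) as the unique degree with finite nonzero length, both express the length as a sum of Weyl-dimension products over a bounded family of weights, and both evaluate the resulting integral via Selberg to land on $\deg G(n,m+n)$.

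The one genuine gap is in your part (3). The short exact sequence $0 \to I_n^{d-1}/I_n^d \to S/I_n^d \to S/I_n^{d-1} \to 0$ does \emph{not} by itself yield $\ell(H^{n^2-1}_\frakm(S/I_n^D)) = \sum_d \ell(H^{n^2-1}_\frakm(I_n^{d-1}/I_n^d))$: the long exact sequence in local cohomology (equivalently in $\Ext$) has connecting maps that a priori destroy additivity. What is needed is that the induced map $\Ext^{n(m-n)+1}_S(S/I_n^{d-1},S) \to \Ext^{n(m-n)+1}_S(S/I_n^d,S)$ is injective, so the long exact sequence splits into short exact sequences at the critical degree. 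This is a nontrivial structural fact specific to maximal minors (and sub-maximal Pfaffians) proved by Raicu--Weyman--Witt, used in the paper via Proposition~\ref{isom_ext_modules} and Lemma~\ref{injectivity}; without it the telescoping does not go through. Two smaller points: the $\mathbb{C}^m$-factor in the decomposition is $S_{\lambda(0)}\mathbb{C}^m$ for the modified weight $\lambda(0)$, not $S_\lambda\mathbb{C}^m$; and rather than controlling a Riemann-sum error uniformly in $d$, the paper instead observes via Faulhaber/Euler--Maclaurin that the length is \emph{exactly} a polynomial in $d$ of degree $mn-1$ and reads off the leading coefficient as the integral, which sidesteps the error estimate entirely.
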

Suprisingly, $(\ref{main_thm_formula})$ is in fact the degree of the Grassmannian $G(n,m+n)$, see Remark \ref{Grassmannian}. A fortiori, it must be an integer. Moreover, (\ref{main_thm_formula}) can be interpreted as the number of fillings of the $m \times n$ Young diagram with integers $1,...,mn$ and with strictly increasing rows and columns, see \cite[Ex 4.38]{EisenbudHarris}.

Analogously, we have
\begin{theorem}\label{main_thm2}(Theorem \ref{formula_limit_pfaffian})
	Let $S = \mathbb{C}[x_{ij}]_{(2n+1)\times (2n+1)}$ where $[x_{ij}]_{(2n+1)\times (2n+1)}$ is a skew-symmetric matrix, then we have
	\begin{enumerate}
		\item If $j \neq 2n^2-n-1$, then $\ell(H^j_\frakm(S/\Pf_{2n}^D))$ and $\ell(H^j_\frakm(\Pf_{2n}^{d-1}/\Pf_{2n}^d))$ are either $0$ or $\infty$.
		\item If $j=2n^2-n-1$, then $\ell(H^j_\frakm(S/\Pf_{2n}^D))$ and $\ell(H^j_\frakm(\Pf_{2n}^{d-1}/\Pf_{2n}^d))$ are finite and nonzero. Moreover we have, 
			\begin{align}\label{main_thm2_formula}
				\begin{split}
		&\lim_{D \rightarrow \infty}\dfrac{(2n^2+n-1)!\ell(H^j_\frakm(\Pf_{2n}^{d-1}/\Pf_{2n}^d))}{d^{(2n^2+n-1)}} \\&= (2n^2+n)!\prod^{n-1}_{i=0}\dfrac{(2i)!}{(2n+1+2i)!}
				\end{split}
			\end{align}
			\item In fact the limit $$\lim_{D\rightarrow \infty} \dfrac{(2n^2+n)!\ell(H^j_\frakm(S/\Pf_{2n}^D))}{D^{2n^2+n}}$$ is equal to (\ref{main_thm2_formula}) as well.
	\end{enumerate}
\end{theorem}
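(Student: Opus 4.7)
The plan is to mirror the proof of Theorem~\ref{main_thm} for maximal minors, replacing the bi-$\GL$-equivariant framework for generic matrices with the single $\GL_{2n+1}$-equivariant framework for skew-symmetric matrices (with action $g\cdot A = g A g^{t}$). The relevant representation theory now involves Schur functors $\mathbb{S}_{\lambda}\CC^{2n+1}$ indexed by partitions whose conjugate has all even parts, consistent with the Cauchy-type decomposition of the coordinate ring $S = \Sym(\bigwedge^{2}\CC^{2n+1})$.

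The first step is to decompose $\Pf_{2n}^{d}$ and the quotient $\Pf_{2n}^{d-1}/\Pf_{2n}^{d}$ as $\GL_{2n+1}$-modules into an explicit sum of Schur functors indexed by partitions satisfying a linear inequality in $d$ (the skew-symmetric analogue of the De~Concini--Procesi straightening law, due to Abeasis--Del Fra and Weyman). This yields a finite thin slab $\Lambda_{d}$ of contributing partitions. I would then feed each summand into the Raicu--Weyman computation of $H^{j}_{\frakm}(\mathbb{S}_{\lambda}\CC^{2n+1})$: for every $\lambda \in \Lambda_{d}$ there is a unique cohomological degree in which local cohomology is nonzero, along with a criterion distinguishing finite-length contributions from infinite-length ones. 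A case analysis on the shape of $\lambda$ within $\Lambda_{d}$ should show that all slab contributions simultaneously produce finite length precisely at $j = 2n^{2}-n-1$, giving part~(1) and the finiteness claim in part~(2).

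For this critical $j$, the length becomes an explicit sum of Weyl dimensions
\begin{equation*}
\ell\bigl(H^{j}_{\frakm}(\Pf_{2n}^{d-1}/\Pf_{2n}^{d})\bigr) = \sum_{\lambda \in \Lambda_{d}} \Dim \mathbb{S}_{\lambda}\CC^{2n+1}.
\end{equation*}
After the rescaling $\lambda = d\mu$, the Weyl dimension formula turns the right-hand side into a Riemann sum converging to an integral of a product of linear factors over a rescaled polytope, yielding polynomial growth of degree $2n^{2}+n-1$ in $d$ with a Selberg-type leading constant. The hardest step will be evaluating that integral and matching it with $(2n^{2}+n)!\prod_{i=0}^{n-1}(2i)!/(2n+1+2i)!$: this is the skew-symmetric analogue of the Hook Content computation that recovered the Grassmannian degree in Theorem~\ref{main_thm}, and the identification with the degree of the Orthogonal Grassmannian $OG(n, 2n+1)$ is expected to require a Macdonald-type closed form for the resulting Selberg integral.

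Finally, part~(3) follows from part~(2) by telescoping through the short exact sequence
\begin{equation*}
0 \to \Pf_{2n}^{d-1}/\Pf_{2n}^{d} \to S/\Pf_{2n}^{d} \to S/\Pf_{2n}^{d-1} \to 0
\end{equation*}
and applying a Stolz--Ces\`aro argument to promote $d^{2n^{2}+n-1}$ layer-growth into $D^{2n^{2}+n}$ cumulative growth, with leading coefficient rescaled by $1/(2n^{2}+n)$; this is precisely the shift from $(2n^{2}+n-1)!$ to $(2n^{2}+n)!$ that appears between the two statements, so the same constant on the right-hand side is recovered.
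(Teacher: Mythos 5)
Your high-level plan is sound — decompose into Schur-indexed pieces, isolate the unique finite-length cohomological degree, convert the dimension count into a Riemann sum, evaluate the limit by a Selberg integral, and telescope for part (3). That matches the paper's architecture. However, there is a genuine conceptual gap in the middle step. You propose to decompose $\Pf_{2n}^{d-1}/\Pf_{2n}^{d}$ as a direct sum of Schur functors and then compute $H^{j}_{\frakm}(\mathbb{S}_{\lambda}\CC^{2n+1})$ summand by summand. But the Schur-functor decomposition is only a decomposition as $\GL_{2n+1}$-representations (graded vector spaces), not as $S$-modules, and $H^{j}_{\frakm}(-)$ is not additive over it; indeed $\mathbb{S}_{\lambda}\CC^{2n+1}$ by itself carries no $S$-module structure, so the expression $H^{j}_{\frakm}(\mathbb{S}_{\lambda}\CC^{2n+1})$ does not make sense. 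The paper's actual route is to apply graded local duality to convert $\ell(H^{j}_{\frakm}(-))$ into $\ell(\Ext^{\dim S-j}_{S}(-,S))$, and then to invoke Perlman's theorem: there is a $\GL$-equivariant filtration of $S/\Pf^{d}$ by genuine $S$-modules $J_{\underline{z},l}$, and Perlman proves both that $\Ext^{\bullet}_{S}(S/\Pf^{d},S)$ is the direct sum of $\Ext^{\bullet}_{S}(J_{\underline{z},l},S)$ and that each of these decomposes explicitly into Schur functors. The "finite thin slab" of weights you describe is exactly the set $W(\underline{z},l,\underline{t})$ produced by this theorem, but you cannot reach it without going through the $\Ext$-module side.

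Two further points. First, your telescoping for part (3) silently assumes that the long exact sequence in local cohomology (equivalently, in $\Ext$) coming from $0\to \Pf^{d-1}/\Pf^{d}\to S/\Pf^{d}\to S/\Pf^{d-1}\to 0$ breaks into short exact sequences at the relevant index. That is not automatic and is precisely the Raicu--Weyman--Witt injectivity result for sub-maximal Pfaffian powers, which you should cite; without it you only get an inequality between the accumulated lengths. Second, a small factual slip: the closed form $(2n^{2}+n)!\prod_{i=0}^{n-1}(2i)!/(2n+1+2i)!$ is the degree of the orthogonal Grassmannian $OG(2n,4n+1)$, not $OG(n,2n+1)$; the substitution is $a\mapsto 2n$ in $\deg OG(a,2a+1)$.
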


Similar to Theorem \ref{main_thm}, $(\ref{main_thm2_formula})$ has a geometric interpretation, and it is the degree of the Orthogonal Grassmannian $OG(2n,4n+1)$, see Remark \ref{Orthogonal_Grass}. Therefore it must be an integer as well. This explains the similarities between the Hilbert-Samuel multiplicity and the multiplicities we discuss above. Furthermore, as in the case of Grassmannian, (\ref{main_thm2_formula}) can be interpreted similarly using the shifted standard tableaux, see \cite[p91]{Totaro} for the discussion.

As mentioned before, the above limits are notions of muliplicity. The Hilbert-Samuel multiplicity (see \cite[Ch4]{BrunsHerzog} for more detailed discussion) denoted by $e(I)$, has played an important role in the study of commutative algebra and algebraic geometry. The attempt of its generalization can be traced back to the work of Buchsbaum and Rim \cite{BuchsbaumRim} in 1964. One of the more recent generalizations is defined via the $0$-th local cohomology (see for example \cite{CutkoskyHST},\cite{KatzValidashti}, \cite{UlrichValidashti}), which coincides with the Hilbert-Samuel multiplicity when the ideal is $\mathfrak{m}$-primary. In \cite{CutkoskyHST}, the authors proved the existence of the $0$-multiplicities when the ring is a polynomial ring. Later, Cutkosky showed in \cite{Cutkosky} that the $0$-multiplicities exists under mild assumption of the ring. 

In \cite{JeffriesMontanoVarbaro} the authors studied this $0$-multiplicities of several classical varieties, in particular they calculated the formula of the $0$-multiplicities of determinantal ideals of non-maximal minors and the pfaffians. A further generalized multiplicity is defined in \cite{DaoMontano19} via the local cohomology of arbitrary indices, which is necessary in some situations, e.g. the determinantal ideals of maximal minors. However, the existence of such multiplicity is not known in general. In the unpublised work \cite{Kenkel} the author calculated the closed formula, and thus showed the existence, of the generalized $j$-multiplicity defined in \cite{DaoMontano19} of determinantal ideals of maximal minors of $m\times 2$ matrices. Thus our Theorem \ref{main_thm} and Theorem $\ref{main_thm2}$ are extensions of the results in \cite{JeffriesMontanoVarbaro} and \cite{Kenkel}.

We give the definiton of generalized multiplicities here.

\begin{defn}\label{defn_dm}(see \cite{DaoMontano19} for more details)
	Let $S$ be a Noetherian ring of dimension $k$ and $\frakm$ a maximal ideal of $S$. Let $I$ be an ideal of $S$. Define
$$\epsilon_+^j(I) := \limsup_{D\rightarrow\infty}\dfrac{k!\ell(H^j_\frakm(S/I^D))}{D^k}.$$
	Suppose $\ell(H^j_\frakm(S/I^D)) < \infty$, then we define
	$$\epsilon^j(I) := \lim_{D \rightarrow \infty} \dfrac{k!\ell(H^j_\frakm(S/I^D))}{D^{k}}$$ if the limit exist and we call it the $j$-$\epsilon$-multiplicity. 
\end{defn}

\begin{defn}\label{defn_higher_mul}
	Under the same setting, we define $$\mathfrak{J}^j_+(I) := \limsup_{d\rightarrow \infty} \dfrac{(k-1)!\ell(H^j_\frakm(I^{d-1}/I^d))}{d^{k-1}}.$$
	If $\ell(H^j_\frakm(I^{d-1}/I^d)) < \infty$, then we define $$\mathfrak{J}^j(I) := \lim_{D\rightarrow \infty} \dfrac{(k-1)!\ell(H^j_\frakm(I^{d-1}/I^d))}{d^{k-1}}.$$ if the limit exists and we call it the $j$-multiplicity.
\end{defn}

When $I$ is a $\frakm$-primary ideal, we have $$e(I) = (\Dim(S))!\lim_{t\rightarrow \infty} \dfrac{\ell(S/I^t)}{t^{\Dim(S)}} = (\Dim(S)-1)!\lim_{t\rightarrow \infty} \dfrac{\ell(I^{t-1}/I^t)}{t^{\Dim(S)-1}}.$$ However in general we may have $\epsilon^j(I) \neq \frakj^j(I)$, as we can see in the below results of $\epsilon^0(I_p)$ and $\Pf_{2p}$ for $p < n$ in \cite{JeffriesMontanoVarbaro}.

\begin{theorem}(See \cite[Theorem 6.1]{JeffriesMontanoVarbaro})\label{0_multiplicities_result}
	Let $I_p$ be the determinantal ideal of $p \times p$-minors of $S$ where $S$ is a polynomial ring of generic $m \times n$ variables over $\mathbb{C}$ and $0<p<n\leq m$. Let 
$$c=\dfrac{(mn-1)!}{(n-1)!...(n-m)!m!(m-1)!...1!},$$
then we have
\begin{enumerate}
\item $$\epsilon^0(I_p) = cmn\int_{\Delta_1}(z_1...z_n)^{m-n}\prod_{1\leq i < j\leq n}(z_j-z_i)^2dz$$ where $\Delta_1=\operatorname{max}_i\{{z_i}\}+t-1\leq \sum z_i \leq t\} \subseteq [0,1]^n$, 
\item $$\frakj^0(I_p) = cp\int_{\Delta_2}(z_1...z_n)^{m-n}\prod_{1\leq i<j \leq n}(z_j-z_i)^2 dz$$ where $\Delta_2  = \{\sum z_i = t\} \subseteq [0,1]^n$.
\end{enumerate}
\end{theorem}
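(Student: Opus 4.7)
The plan is to carry out a representation-theoretic analysis of $S/I_p^D$ under the natural action of $\GL_m(\CC) \times \GL_n(\CC)$, extract the length of the $\frakm$-torsion submodule as a weighted sum of dimensions of Schur modules, and then pass to a Riemann-integral limit.

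First, invoking the Cauchy decomposition, one writes
$$S = \bigoplus_\lambda \mathbb{S}_\lambda \CC^m \otimes \mathbb{S}_\lambda \CC^n,$$
where the sum runs over partitions $\lambda$ with at most $n$ parts. Using the classical plethystic description of $I_p^D$ (De Concini--Eisenbud--Procesi), one identifies $I_p^D$ as a sum of Schur bifunctors indexed by a combinatorial set $\mathcal{P}_{p,D}$ of partitions determined by a staircase condition on $\lambda$. Second, I would appeal to the local cohomology computations of Raicu and collaborators for determinantal thickenings to identify precisely which Schur summands of $S/I_p^D$ contribute to $H^0_\frakm$. This amounts to selecting the $\lambda$ whose isotypic component has socle supported at the origin, which is governed by a rectangularity/saturation constraint on the Young diagram.

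Third, I would apply the Weyl dimension formula to obtain the top-degree asymptotic
$$\Dim \mathbb{S}_\lambda \CC^m \cdot \Dim \mathbb{S}_\lambda \CC^n \sim C \cdot \prod_{i=1}^n \lambda_i^{\,m-n}\prod_{1 \leq i < j \leq n}(\lambda_j - \lambda_i)^2,$$
where $C$ is read off from the denominators $\prod_{i<j}(j-i)$ appearing in the Weyl formula; tracking these produces exactly the constant $c$ stated in the theorem. Finally, summing these dimensions over the admissible $\lambda$ and rescaling via $z_i = \lambda_i/D$, the sum is recognized as a Riemann sum of mesh $1/D$. Passing to the limit $D \to \infty$ converts the sum into an integral over a polytope in $[0,1]^n$ cut out by the linear inequalities encoding $\mathcal{P}_{p,D}$ together with the $H^0_\frakm$ selection criterion; for $\epsilon^0(I_p)$ this gives a slab (the region $\Delta_1$), while for $\frakj^0(I_p)$ one only counts contributions from $I_p^{d-1}/I_p^d$, i.e.\ partitions on the boundary layer of $\mathcal{P}_{p,d}$, producing the codimension-one slice $\Delta_2 = \{\sum z_i = t\}$. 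The extra factors of $mn$ versus $p$ in front of the two integrals reflect the different normalizations from $(k)!$ vs.\ $(k-1)!$ together with the difference between a slab and a slice in the Cavalieri principle.

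The main obstacle is the second step: identifying $H^0_\frakm(S/I_p^D)$ representation-theoretically in the non-maximal case $p < n$. Unlike the maximal minors case treated in this paper, for non-maximal $p$ multiple Schur summands interact nontrivially under $\frakm$-torsion, and one needs the full strength of the Raicu--Weyman local cohomology description to pin down the $j=0$ contribution. Once that description is in hand, the Weyl-dimension computation and the Riemann-sum limit are largely mechanical, and the geometry of the resulting polytope (in particular the appearance of the $\max_i\{z_i\}$ term in $\Delta_1$) reflects the combinatorial description of the extremal partitions satisfying the saturation condition.
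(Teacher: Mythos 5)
This theorem is quoted in the paper as background from Jeffries--Monta\~{n}o--Varbaro (cited as \cite[Theorem 6.1]{JeffriesMontanoVarbaro}); the paper under review does not prove it, so there is no internal proof to compare your proposal against. I will therefore assess the proposal on its own terms and relative to the approach in \cite{JeffriesMontanoVarbaro}.

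Your broad outline --- Cauchy decomposition, a combinatorial description of which Schur pieces survive in $S/I_p^D$, the Weyl dimension formula, and a Riemann-sum limit passing to an integral over a polytope --- is indeed the spirit of the argument in \cite{JeffriesMontanoVarbaro}. But the step you yourself flag as ``the main obstacle'' is genuinely where your plan as stated goes off track, and not for the reason you suggest. You propose to identify the summands of $H^0_\frakm(S/I_p^D)$ by ``appealing to the local cohomology computations of Raicu and collaborators for determinantal thickenings,'' but that is the wrong tool here. The Raicu--Weyman machinery (and the decomposition of $\Ext^j_S(S/I_p^D,S)$ recalled in Section 3 of this paper) is designed for the higher cohomological indices; moreover, it postdates \cite{JeffriesMontanoVarbaro} and is not what they use. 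For $j=0$ the identification is more elementary: $H^0_\frakm(S/I_p^D)$ is exactly the saturation quotient $\widetilde{I_p^D}/I_p^D$, and the $\GL_m\times\GL_n$-equivariant ideals of $S$ are completely classified by De Concini--Eisenbud--Procesi in terms of order ideals of partitions. So the set of $\lambda$ contributing to $H^0_\frakm$ is cut out by explicit linear inequalities in the parts of $\lambda$ coming from the description of $\widetilde{I_p^D}$ (which is governed by symbolic powers of the $I_q$), not from an Ext-module decomposition. Once that combinatorial region is pinned down, the inequalities rescale precisely to the slab $\Delta_1$ (for the full thickening) and to the slice $\Delta_2$ (for the graded piece $I_p^{d-1}/I_p^d$), which is what produces the two regions in the statement. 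Without making this step explicit, the proposal cannot reproduce $\Delta_1$ or $\Delta_2$ and in particular cannot explain the appearance of the $\max_i\{z_i\}$ constraint. The remaining steps (Weyl dimension formula giving the weight $(z_1\cdots z_n)^{m-n}\prod_{i<j}(z_j-z_i)^2$, and the Riemann-sum passage with constant $c$) are correct in outline and routine once the polytope is identified.
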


The authors have also proved a corresponding theorem for the skew-symmetric matrix.

\begin{theorem}(See \cite[Theorem 6.3]{JeffriesMontanoVarbaro})
	Let Let $\Pf_{2p}$ be the $2p \times 2p$ pfaffians of a polynomial ring $S$ with $n \times n$ skew-symmetric variables. Let $m := \lfloor n/2 \rfloor$. Then for $0 < p < m$, let $$c = \dfrac{(\binom{n}{2}-1)!}{m!(n-1)!...1!},$$ we have
	\begin{enumerate}
		\item	$$\epsilon^0(\Pf_{2p}) = c\binom{n}{2}\int_{\Delta_1}(z_1...z_m)^{2y}\prod_{1\leq i < j \leq m}(z_j-z_i)^4dz$$
		\item $$\frakj^0(\Pf_{2p}) = cp\int_{\Delta_2}(z_1...z_n)^{2y}\prod_{1\leq i < j\leq m}(z_j-z_i)^4 dz.$$
	\end{enumerate}
			where $y=0$ if $n$ is even and $1$ otherwise, and $\Delta_1$ and $\Delta_2$ are the same as those in Theorem \ref{0_multiplicities_result}.
\end{theorem}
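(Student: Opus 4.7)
The plan is to adapt the $\GL_n$-equivariant approach that Jeffries--Monta\~no--Varbaro use for the determinantal case. With $V = \CC^n$, the coordinate ring $S = \Sym(\bigwedge^2 V)$ of skew-symmetric $n \times n$ matrices decomposes as $\bigoplus_\lambda S_\lambda V$, where $\lambda$ runs over a distinguished set of partitions (with all even parts when $n$ is even, modified by a half-spin shift when $n$ is odd). Since $\Pf_{2p}$ is $\GL_n$-stable, so are all its powers, and therefore every thickening $S/\Pf_{2p}^D$ and every layer $\Pf_{2p}^{d-1}/\Pf_{2p}^d$ decomposes as a direct sum of Schur functors $S_\lambda V$; the same holds for their $\frakm$-torsion submodules $H^0_\frakm$.

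The first concrete step is to identify, for each $D$, the index set $\Lambda(D)$ of partitions $\lambda$ such that $S_\lambda V$ appears in $H^0_\frakm(S/\Pf_{2p}^D)$, together with its multiplicity $m_\lambda$. I would attack this through an equivariant resolution or saturation description of $\Pf_{2p}^D$, analogous to the work of Raicu et al.\ on maximal minors, so that $\Lambda(D)$ is cut out by an explicit combinatorial condition on the parts of $\lambda$ that depends on $p$ and $D$. Once $\Lambda(D)$ and $m_\lambda$ are known, one has
\[
\ell\!\left(H^0_\frakm(S/\Pf_{2p}^D)\right) \;=\; \sum_{\lambda \in \Lambda(D)} m_\lambda \Dim_\CC(S_\lambda V),
\]
with $\Dim(S_\lambda V)$ given by the Weyl dimension formula in terms of $\prod_{i<j}(\lambda_i - \lambda_j + j - i)$; an analogous sum on a ``slice'' index set computes $\ell(H^0_\frakm(\Pf_{2p}^{d-1}/\Pf_{2p}^d))$.

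The final step is asymptotic. Rescaling $\lambda = D z$, the Weyl factor contributes one Vandermonde $\prod_{i<j}(z_j - z_i)$; the evenness constraint on the parts (effective only on $m = \lfloor n/2 \rfloor$ free variables) contributes another, while the lattice-density correction in the Riemann sum contributes a further doubling, so that after all cancellations one obtains $\prod_{i<j}(z_j - z_i)^4$ in the limit. The factor $(z_1 \cdots z_m)^{2y}$ with $y \in \{0,1\}$ encodes the half-spin shift in the odd $n$ case. The constant $c$ collects $k!/D^k$ with $k = \binom{n}{2}$ from Definition~\ref{defn_dm} together with the $\prod(j-i)^{-1}$ denominators from Weyl, and the different overall factors $\binom{n}{2}$ versus $p$ out front of $c$ reflect how the cumulative integral over $\Delta_1$ degenerates to a slice integral over $\Delta_2$.

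The main obstacle is the first step: without a clean equivariant description of $H^0_\frakm$ of the thickenings, the Riemann-sum machinery has nothing to act on, and the skew-symmetric setting does not have as developed a library of equivariant resolutions as the generic matrix case. A secondary difficulty is the careful bookkeeping of the even/odd $n$ dichotomy --- in particular, verifying that the effective Vandermonde exponent lands on exactly $4$ rather than some nearby integer, which requires tracking both the Weyl contribution and the lattice spacing, and that the residual factor of $z_1 \cdots z_m$ appears with exponent $2y$ precisely matching the half-spin shift.
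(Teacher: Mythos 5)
This statement is a background result quoted from Jeffries--Monta\~no--Varbaro \cite[Theorem 6.3]{JeffriesMontanoVarbaro}; the paper does not prove it, so there is no in-paper argument to compare against. Your proposal follows the correct general framework (GL-equivariant decomposition of $S/\Pf_{2p}^D$, identification of the partition index set for $H^0_\frakm$ via the saturation $(\Pf_{2p}^D : \frakm^\infty)/\Pf_{2p}^D$, Weyl dimension formula, and a Riemann-sum limit), which is indeed what JMV do. However, the step that would actually produce the stated integrand is mis-analyzed in two places.

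First, the exponent $4$ on the Vandermonde does \emph{not} arise as ``one from Weyl, one from the evenness constraint, one from lattice density.'' It comes entirely from the Weyl dimension formula applied to the doubled partition $\lambda = \underline{z}^{(2)} = (z_1,z_1,\dots,z_m,z_m)$ (or $(z_1,z_1,\dots,z_m,z_m,0)$ when $n$ is odd). For each pair $1\leq a<b\leq m$ there are exactly four index pairs $(2a{-}\delta,\,2b{-}\delta')$ with $\delta,\delta'\in\{0,1\}$ in the product $\prod_{i<j}(\lambda_i-\lambda_j+j-i)/(j-i)$, each contributing a factor asymptotic to $(z_a-z_b)$; multiplying gives $(z_a-z_b)^4$. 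The evenness constraint on columns merely says that the free variables are $z_1,\dots,z_m$; it does not contribute an extra Vandermonde, nor does a lattice-spacing correction. If you set up the bookkeeping as you propose, you would not land on the exponent $4$ cleanly. Second, ``half-spin shift'' is not the source of the factor $(z_1\cdots z_m)^{2y}$. All representations here are ordinary polynomial $\GL_n$-representations; no spin or half-integer weights occur. The factor $(z_1\cdots z_m)^{2y}$ with $y=1$ for $n$ odd comes from the same Weyl product: when $n=2m+1$ there is an extra trailing index $2m{+}1$ with $\lambda_{2m+1}=0$, and for each $a$ the two pairs $(2a{-}1,\,2m{+}1)$ and $(2a,\,2m{+}1)$ each contribute a factor asymptotic to $z_a$, producing $\prod_a z_a^2$. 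You should also note a typo carried over from the statement as quoted: in item (2) the integrand should read $(z_1\cdots z_m)^{2y}$, not $(z_1\cdots z_n)^{2y}$.
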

Note that when $S$ is a polynomial ring of $m\times n$ generic variables and $p=n$, $H^0_\frakm(S/I_n^D)$ is always $0$, respectively when $S$ is a polynomial ring of $(2n+1) \times (2n+1)$ skew-symmetric variables and $q=n$, we have $H^0_\frakm(S/\Pf_{2n}^D) = 0$. To avoid this triviality we will instead study the multiplicites of $I_n$ and $\Pf_{2n}$ of higher cohomological indices, which will require more tools from representation theory.

It was proved in \cite{DaoMontano19} that when $S$ is a polynomial ring of $k$ variables and when $J$ is a homogeneous ideal of $S$, we have for all $\alpha \in \mathbb{Z}$, 
$$\limsup_{D\rightarrow\infty}\dfrac{k!\ell(H^j_\frakm(S/I^D)_{\geq \alpha D})}{D^k} < \infty.$$
As a corollary of the above result, combined with the result from \cite{Raicu}, which states that if $S$ is a polynomial ring of $m \times n$ variables and $I_p$ is a determinantal ideal of $p \times p$-minors of $S$, then $H^i_\frakm(S/I_p^D)_j = 0$ for $i\leq m+n-2$ and $j<0$, we get that $\epsilon_+^j(I_p) < \infty$ for $j\leq m+n-2$ (see \cite[Ch 5]{DaoMontano19}). Note that, as mentioned in \cite[Ch 7]{DaoMontano19}, even if $\epsilon^j(I)$ exists, it doesn't have to be rational (see the example in \cite[Ch 3]{CutkoskyHST}). Therefore it is natural to ask for which $j$ the multiplicities exist, and if they exist, the rationality of the multiplicities. As we see in Theorem \ref{main_thm} and Theorem \ref{main_thm2}, the only interesting cohomological indices to our question are $n^2-1$ for maximal minors and $2n^2-n-1$ for sub-maximal pfaffians, and we solve the problem of calculating the generalized multiplicites of determinantal ideals of maximal minors and sub-maximal pfaffians completely.




\textbf{Organization.} In section 2 we will recall briefly the construction of Schur functors. In section 3 we will review the $\Ext$-module decompositions in the case of determinantal thickenings of generic matrix and derive some useful properties. Then we will show the existence calculate the $j$-multiplicity in section 4. We will follow the same strategies for skew-symmetric matrix in section 5 and 6. Finally, we will discuss some future directions of this line of work in section 7.

\medskip
\textbf{Notations.}
In this paper $\ell(M)$ will denote the length of a module $M$, $S$ will denote the polynomial ring $\mathbb{C}[x_{ij}]$. We will use $D$ to denote the powers of ideals when we discuss modules related to $\epsilon^j(I)$ and use $d$ to denote the powers of ideal when we dicuss modules related $\frakj^j(I)$. All rings are assumed to be unital commutative.

\section{Preliminaries on Schur Functor}
We will recall the basic construction of the Schur functors, more information can be found in \cite{FultonHarris} and \cite{Weyman}. Let $V$ be an $n$-dimensional vector space over $\mathbb{C}$. Denote the collection of partitions with $n$ nonzero parts by $\mathcal{P}(n)$. We define a dominant weight of $V$ to be $\lambda = (\lambda_1,...,\lambda_n) \in \mathbb{Z}^n$ such that $\lambda_1\geq ... \geq \lambda_n$ and denote the set of dominant weights to be $\ZZ_{\text{dom}}$. Note that $(\lambda_1,\lambda_2,0,0,...,0) = (\lambda_1,\lambda_2)$. Furthermore we denote $(c,...,c)$ by $(c^n)$. We say $\lambda=(\lambda_1,\lambda_2,...) \geq \alpha = (\alpha_1,\alpha_2,...)$ if each $\lambda_i \geq \alpha_i$. Given a weight we can define an associated Young diagram with numbers filled in. For example if $\lambda=(3,2,1)= (3,2,1,0,0,0)\in \mathbb{Z}^6$, then we can draw the Young diagram 
\[
\begin{ytableau}
	1 & 2 & 3\\
	4 & 5\\
	6\\	
\end{ytableau}
\]
Let $\mathfrak{S}_n$ be the permutation group of $n$ elements. Let $P_\lambda = \{g\in\mathfrak{S}_n:g \text{ preserves each row}\}$ and $Q_\lambda=\{g\in\mathfrak{S}_n:g \text{ preserves each column}\}$. Then we define $a_\lambda = \sum_{g\in P_\lambda}e_g$, $b_\lambda = \sum_{g\in Q_\lambda}\operatorname{sgn}(g)e_g$, and moreover $c_\lambda  = a_\lambda \cdot b_\lambda$. 

Recall that the Schur functor $S_\lambda(-)$ is defined to $$S_\lambda(V) = \operatorname{Im}(c_\lambda\big|_{V^{\otimes \mu}})$$ where $\mu = |\lambda|$.

Let $V$ be an $n$-dimensional $\mathbb{C}$-vector space. We have a formula for the dimension of $S_\lambda V$ as $\mathbb{C}$-vector space.
\begin{prop}(See \cite[Ch2]{FultonHarris})\label{dim_schur}
	Suppose $\lambda = (\lambda_1,...,\lambda_n) \in \mathbb{Z}^n_\text{dom}$. Then we have $$\Dim(S_\lambda V) = \prod_{1\leq i < j \leq n}\dfrac{\lambda_i-\lambda_j+j-i}{j-i}.$$
\end{prop}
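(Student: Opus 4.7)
The plan is to derive the formula by evaluating the character of $S_\lambda V$ at the identity, using Weyl's bialternant formula. As a $\GL(V)$-representation, the character of $S_\lambda V$ on a diagonal matrix $\operatorname{diag}(x_1,\ldots,x_n)$ is the Schur polynomial
$$s_\lambda(x_1,\ldots,x_n) = \frac{\det(x_i^{\lambda_j+n-j})_{1\le i,j\le n}}{\det(x_i^{n-j})_{1\le i,j\le n}},$$
whose denominator equals the Vandermonde $\prod_{i<j}(x_i-x_j)$. This identity is proved in Fulton--Harris and may be invoked directly. Since $\Dim S_\lambda V = s_\lambda(1,\ldots,1)$ yields a $0/0$ indeterminacy upon naive substitution, I would instead specialize $x_i = q^{n-i}$ and send $q \to 1$.

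With this substitution the numerator becomes $\det\bigl(q^{(n-i)(\lambda_j+n-j)}\bigr)$, a Vandermonde in the variables $y_j := q^{\lambda_j+n-j}$, and so equals $\prod_{j<k}(q^{\lambda_j+n-j}-q^{\lambda_k+n-k})$. Factoring the smaller power of $q$ out of each bracket rewrites this as
$$q^{A} \prod_{j<k}\bigl(q^{(\lambda_j-\lambda_k)+(k-j)}-1\bigr), \qquad A := \sum_{j<k}(\lambda_k+n-k).$$
Treating the denominator in the same way produces $q^{B}\prod_{i<j}(q^{j-i}-1)$ with $B := \sum_{i<j}(n-j)$.

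Taking the ratio, the number of bracket factors matches on top and bottom, so after applying the elementary limit $(q^a-1)/(q-1) = 1 + q + \cdots + q^{a-1} \to a$ the $(q-1)$-powers cancel, and each surviving bracket contributes $(\lambda_j-\lambda_k)+(k-j)$ in the numerator and $(j-i)$ in the denominator. A short index calculation shows $A - B = \sum_{k=1}^n(k-1)\lambda_k$, a fixed integer, so the prefactor $q^{A-B}$ tends to $1$; what remains is exactly $\prod_{i<j}\bigl((\lambda_i-\lambda_j)+(j-i)\bigr)/(j-i)$, as claimed.

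The main obstacle is the bookkeeping of the $q$-powers stripped off each bracket and confirming that they assemble into a prefactor whose limit is $1$; the computation is routine but easy to mis-sign. No input beyond the bialternant formula and the elementary limit above is needed, so the entire argument is a direct specialization of Weyl's character formula to the identity element.
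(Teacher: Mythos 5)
Your argument is correct: it is the standard derivation of the Weyl dimension formula for $\GL_n$ by principal specialization $x_i = q^{n-i}$ in the bialternant expression for $s_\lambda$, followed by $q\to 1$. The paper states Proposition \ref{dim_schur} as a citation to Fulton--Harris without reproducing a proof, and the proof you give is essentially the one in that reference, so there is nothing to compare beyond noting that your bookkeeping of the prefactor $q^{A-B}$ with $A-B=\sum_k (k-1)\lambda_k$ and the limit $(q^a-1)/(q^b-1)\to a/b$ is accurate.
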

From the formula of $\Dim(S_\lambda V)$ it is easy to see the following.
\begin{corollary}\label{same_dim}
	For any $c\in \mathbb{N}$ we have
	$$\Dim(S_\lambda V) = \Dim(S_{\lambda+(c^n)}V).$$
\end{corollary}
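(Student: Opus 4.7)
The plan is to deduce the statement directly from the hook-content style formula given in Proposition \ref{dim_schur}. That formula expresses $\Dim(S_\lambda V)$ as a product whose factors involve only the differences $\lambda_i - \lambda_j$ (together with the constants $j-i$), and shifting every coordinate by the same constant $c$ leaves all such differences fixed.

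Concretely, I would write $\mu := \lambda + (c^n)$, so $\mu_i = \lambda_i + c$ for every $i \in \{1, \ldots, n\}$. Applying Proposition \ref{dim_schur} to $\mu$ gives
\begin{equation*}
\Dim(S_\mu V) = \prod_{1 \leq i < j \leq n} \frac{\mu_i - \mu_j + j - i}{j - i}.
\end{equation*}
For each pair $i < j$ one has $\mu_i - \mu_j = (\lambda_i + c) - (\lambda_j + c) = \lambda_i - \lambda_j$, so each numerator $\mu_i - \mu_j + j - i$ equals $\lambda_i - \lambda_j + j - i$. Hence the product for $\Dim(S_\mu V)$ agrees term by term with the product for $\Dim(S_\lambda V)$, and the equality follows.

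There is essentially no obstacle here; this is a one-line consequence of the explicit dimension formula, and the only small thing to check is that $\mu$ is still a dominant weight in $\mathbb{Z}^n_{\text{dom}}$ so that Proposition \ref{dim_schur} applies, which is immediate since adding a constant to every entry of a weakly decreasing tuple preserves the inequalities $\mu_1 \geq \mu_2 \geq \cdots \geq \mu_n$.
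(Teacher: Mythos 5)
Your proof is correct and follows exactly the route the paper intends: the dimension formula in Proposition \ref{dim_schur} depends on $\lambda$ only through the differences $\lambda_i-\lambda_j$, which are unchanged by adding $(c^n)$. The paper simply states this as immediate from the formula, so your write-up just fills in the short calculation.
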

\section{Decompositions of Ext modules of determinantal thickenings of maximal minors}
In this section we recall the $\operatorname{GL}$-equivariant $\mathbb{C}$-vector spaces decompositions of $\Ext^j_S(S/I_p^D)$ given in \cite{Raicu}. This will be the key ingredient in the disuccsion of multiplicities in section 4. 

Following the notations in \cite{Raicu}, we denote 
$$\mathcal{X}^d_p = \{\underline{x} \in \mathcal{P}(n) : |\underline{x}| = pd, x_1 \leq d\}.$$

Recall the following construction of finite set. First we define $x_i'$ to be the number of boxes in the $i$-th column of the Young diagram defined by $\underline{x}$. Then we define $\underline{x}(c)$ to be such that $\underline{x}(c)_i = \operatorname{min}(x_i,c)$.

\begin{defn}\label{Z_set}(See \cite[Definition 3.1]{Raicu})
	Suppose $\mathcal{X} \subset \mathcal{P}(n)$ is a finite subset. Then we define the set $\mathcal{Z}(\mathcal{X})$ to be the set consisted of the pair $(\underline{z},l)$ with $\underline{z} \in \mathcal{P}$ and $l \geq 0$. Let $z_1 = c$. Then we have 
	\begin{enumerate}
		\item There exists a partition $\underline{x} \in \mathcal{X}$ such that $\underline{x}(c) \leq \underline{z}$ and $x'_{c+1} \leq l+1$.
		\item If $\underline{x} \in \mathcal{X}$ satisfies (1) then $x'_{c+1} = l+1$.
	\end{enumerate}
\end{defn}

\begin{lemma}\label{z(x)_set}(See \cite[Lemma 5.3]{Raicu})
	Denote $\mathcal{Z}(\mathcal{X}^d_p)$ by $\mathcal{Z}^d_p$, then we have
\begin{align*}
    \mathcal{Z}^d_p=\big\{(\underline{z},l): 0\leq l \leq p-1, \underline{z}\in \mathcal{P}(n), z_1=...=z_{l+1}\leq d-1&, \\|\underline{z}| + (d-z_1)\cdot l +1 \leq p\cdot d \leq |\underline{z}|+(d-z_1)\cdot(l+1)\big\}.
\end{align*}
\end{lemma}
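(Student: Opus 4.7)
The plan is to unpack Definition \ref{Z_set} by stratifying $\mathcal{X}^d_p$ according to the value $k := x'_{c+1}$, where $c := z_1$. Setting $K(\underline{z}) := \{k \ge 0 : \exists\, \underline{x} \in \mathcal{X}^d_p \text{ with } \underline{x}(c) \le \underline{z} \text{ and } x'_{c+1} = k\}$, conditions (1) and (2) taken together are equivalent to $\min K(\underline{z}) = l+1$, and the problem reduces to computing $K(\underline{z})$ explicitly.

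First I would analyze the existence question for fixed $k$. A partition $\underline{x} \in \mathcal{P}(n)$ with $x'_{c+1} = k$ has its first $k$ parts in $[c+1,d]$ and the remaining parts in $[0,c]$, so the partition inequality $x_k \ge x_{k+1}$ is automatic. The constraint $\underline{x}(c) \le \underline{z}$ forces $z_i = c$ for $i \le k$ (equivalently $z_k = c$, since $z_1 = c$) together with $x_i \le z_i$ for $i > k$. A short counting argument then shows that, given $z_k = c$, the attainable values of $|\underline{x}|$ fill the integer interval $[k(c+1),\, |\underline{z}|+k(d-c)]$. Hence $k \in K(\underline{z})$ precisely when $z_k = c$ and $k(c+1) \le pd \le |\underline{z}|+k(d-c)$.

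To derive the stated description from $\min K(\underline{z}) = l+1$, I would first establish the boundary conditions. If $z_1 \ge d$, then every $\underline{x} \in \mathcal{X}^d_p$ has $x_1 \le d \le c$, so $x'_{c+1} = 0$ and condition (2) forces $l+1 = 0$, a contradiction; thus $z_1 \le d-1$, i.e., $d > c$. If $l \ge p$, then $z_{l+1} = c$ combined with monotonicity of $\underline{z}$ gives $z_p = c$, so $\underline{x} = (d^p, 0^{n-p}) \in \mathcal{X}^d_p$ satisfies $\underline{x}(c) \le \underline{z}$ with $x'_{c+1} = p \le l$, contradicting $\min K(\underline{z}) = l+1$; thus $l \le p-1$. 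With $d > c$ in hand, membership $l+1 \in K(\underline{z})$ produces $z_{l+1} = c$ (so $z_1 = \cdots = z_{l+1}$) and $pd \le |\underline{z}|+(l+1)(d-c)$, while the exclusion $[0,l] \cap K(\underline{z}) = \emptyset$ — since $z_k = c$ and $k(c+1) \le (l+1)(c+1) \le pd$ are automatic on this range — reduces by monotonicity of $|\underline{z}|+k(d-c)$ to the single bound $|\underline{z}|+(d-z_1)l+1 \le pd$.

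The main obstacle is the converse direction together with the interdependence of the boundary and monotonicity arguments. In the reverse direction $d > c$ is supplied by hypothesis, so monotonicity immediately yields $[0,l] \cap K(\underline{z}) = \emptyset$; the remaining check $l+1 \in K(\underline{z})$ requires deriving the unstated bound $(l+1)(c+1) \le pd$ from the hypotheses. This follows from $pd \ge |\underline{z}|+l(d-c)+1 \ge (l+1)c + l(d-c) + 1$ and $d-c \ge 1$, which together give $pd \ge (l+1)c + l + 1 = (l+1)(c+1)$. Once this routine verification is in place, the two characterizations of $\mathcal{Z}^d_p$ agree and the lemma follows.
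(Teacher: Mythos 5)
The paper does not prove this lemma at all: it is imported verbatim from \cite[Lemma 5.3]{Raicu}, so there is no in-paper proof to compare against. Your argument is a correct, self-contained derivation from Definition \ref{Z_set}. The reduction to $\min K(\underline z)=l+1$ is exactly the content of conditions (1)--(2), the stratification by $k=x'_{c+1}$ correctly identifies $k\in K(\underline z)$ with $z_k=c$ and $k(c+1)\le pd\le|\underline z|+k(d-z_1)$ (the interval-filling step is sound: start from $(d^k,z_{k+1},\dots,z_n)$ and decrement one coordinate at a time down to $((c+1)^k,0,\dots,0)$), and the boundary cases $z_1\le d-1$, $l\le p-1$ are correctly disposed of. The only part worth tightening is the $z_1\ge d$ case: there you argue via condition (2), but if $pd>|\underline z|$ no $\underline x$ with $\underline x(c)\le\underline z$ exists at all, so condition (1) fails rather than (2); either way $(\underline z,l)\notin\mathcal Z^d_p$, so the conclusion stands. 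The derivation of the hidden bound $(l+1)(c+1)\le pd$ in the converse direction from $pd\ge|\underline z|+l(d-c)+1\ge(l+1)c+l+1$ is the one genuinely non-automatic verification, and you handle it correctly. This is presumably the same combinatorial unpacking Raicu carries out.
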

Next we recall the construction of the quotient $J_{\underline{z},l}$ from \cite{RaicuWeyman}, and will be crucial in the decomposition of the $\Ext$ modules of $\GL$-equivariant ideals. Let $\underline{z} = (z_1,...,z_m) \in \mathcal{P}(m)$ be such that $z_1 = ... = z_{l+1}$ for some $0 \leq l \leq m-1$. Then we define $$\mathfrak{succ}(\underline{z}, l) = \{\underline{y} \in \mathcal{P}(m) | \underline{y} \geq \underline{z} \text{ and } y_i > z_i \text{ for some } i>l \},$$ it is easy to see that $I_{\mathfrak{succ}(\underline{z},l)} \subseteq I_{\underline{z}}$, so we can define the quotient $J_{\underline{z},l} = I_{\underline{z}} / I_{\mathfrak{succ}(\underline{z},l)}$.

The above definition and lemma will be used again later when we study the case of pfaffians of skew-symmetric matrix. In section 3 and section 4 we consider $S=\mathbb{C}[x_{ij}]$ where $[x_{ij}]$ is a generic matrix of $m \times n$ variables. Recall that we have the $\operatorname{GL}$-equivariant decomposition (Cauchy formula) of $S$:$$S=\bigoplus_{\lambda\in \mathbb{Z}^{\text{dom}}_{\geq 0}} S_\lambda\mathbb{C}^m \otimes S_\lambda\mathbb{C}^n.$$

Denote by $I_\lambda$ to be the ideal generated by $S_\lambda\mathbb{C}^m \otimes S_\lambda\mathbb{C}^n$. It was shown in \cite{DeConciniEisenbudProcesi} that a $\operatorname{GL}$-equivariant ideal $I$ of $S$ can be written as $$I_\lambda = \bigoplus_{\mu\geq \lambda}S_\mu\mathbb{C}^m \otimes S_\mu\mathbb{C}^n,$$ and in particular the ideal of $p\times p$ minors is equal to $I_{(1^p)}$, moreover we have $I^d_p = I_{\mathcal{X}^d_p}$. Moreover we get that the $\GL$-invariant ideals are of the form $$I_\mathcal{X} = \bigoplus_{\lambda \in \mathcal{X}} I_\lambda$$ for $\mathcal{X} \subset \mathcal{P}(n)$. 

The following is the key tool of this paper. Note that in \cite{Raicu} the author considered the decomposition of $\Ext^j_S(S/I_\mathcal{X},S)$ in general, but here we only consider specifically the determinantal ideals.

\begin{theorem}\label{schur-decompos}(See \cite[Theorem 3.3]{RaicuWeyman}, \cite[Theorem 2.5, Theorem 3.2]{Raicu})
	There exists a $GL$-equivariant filtration of $S/I_p^d$ with factors $J_{\underline{z},l}$ which are quotients of $I_{\underline{z}}$. Therefore we have the following vector spaces decomposition of $\Ext^j_S(S/I^d_p,S)$:
	\begin{align}\label{decompose_first}
		\Ext_S^j(S/I_p^d,S) = \bigoplus_{(\underline{z},l) \in \mathcal{Z}^d_p} \Ext_S^j(J_{\underline{z},l},S)
	\end{align}
and we have
\begin{align}
\operatorname{Ext}_S^j(J_{(\underline{z},l)},S) = \bigoplus_{\substack{0\leq s \leq t_1 \leq ... \leq t_{n-l}\leq l\\ mn -l^2 -s(m-n)-2(\sum^{n-l}_{i=1}t_i)=j \\ \lambda \in W(\underline{z},l;\underline{t},s)}} S_{\lambda(s)}\mathbb{C}^m \otimes S_\lambda\mathbb{C}^n
\end{align}
where $\mathcal{P}_n$ is the collection of partitions with at most $n$ nonzero parts, which means $z_1\geq z_2 \geq ... \geq z_n \geq 0$. Moreover the set $W(\underline{z},l,\underline{t},s)$ consists of dominant weights satisfy the following conditions:
\begin{align}\label{res_weight_gen}
\begin{cases}
\lambda_n \geq l -z_l -m, \\
\lambda_{t_i+i} = t_i-z_{n+1-i}-m & i=1,...,n-l,\\
\lambda_s \geq s-n \text{ and } \lambda_{s+1} \leq s-m.
\end{cases}
\end{align}
and the $\lambda(s)$ is given by $$\lambda(s)=(\lambda_1,...,\lambda_s,(s-n)^{m-n},\lambda_{s+1}+(m-n),...,\lambda_n+(m-n)) \in \mathbb{Z}^m_{\text{dom}}.$$
	In fact in our case we have $\lambda_n=l-z_l-m$. This also implies that $t_{n-l}=l$.
\end{theorem}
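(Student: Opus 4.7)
The plan is to prove the theorem in three stages: first establish the existence of the $\GL$-equivariant filtration, then upgrade the associated filtration on $\Ext$ to a direct sum decomposition, and finally compute $\Ext^j_S(J_{\underline{z},l},S)$ for each factor using geometric techniques.

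First, I would construct the filtration by exploiting the bijection between $\GL$-invariant ideals of $S$ and order ideals in the poset $\mathcal{P}(n)$. Starting from $I_p^d = I_{\mathcal{X}_p^d}$, I build a chain $S \supset I_{\mathcal{X}_0} \supset I_{\mathcal{X}_1} \supset \cdots \supset I_{\mathcal{X}_N} = I_p^d$ by adding one partition at a time in a manner compatible with the poset order. Each successive quotient $I_{\mathcal{X}_i}/I_{\mathcal{X}_{i+1}}$ is, by construction, of the form $I_{\underline{z}}/I_{\mathfrak{succ}(\underline{z},l)} = J_{\underline{z},l}$, and a combinatorial analysis of which pairs arise reproduces the indexing set $\mathcal{Z}^d_p$ described in Lemma \ref{z(x)_set}.

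Second, each short exact sequence in the filtration gives a long exact sequence on $\Ext^j_S(-,S)$. To promote these into the direct sum (\ref{decompose_first}), I would argue that the connecting homomorphisms vanish. Since everything is $\GL_m \times \GL_n$-equivariant and we work in characteristic zero, it suffices to check vanishing on each isotypic component; using the explicit description of $\Ext^j_S(J_{\underline{z},l},S)$ from part (2), I would verify that the Schur weights occurring for different pairs $(\underline{z},l)$ cannot be matched across adjacent cohomological degrees by boundary maps, forcing every sequence to split and yielding the direct sum.

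Third, for the individual modules $\Ext^j_S(J_{\underline{z},l},S)$, I would use Weyman's geometric technique: realize $J_{\underline{z},l}$ as a pushforward from a suitable Grassmannian bundle $\pi : Z \to \mathbb{A}^{mn}$, resolve it by a Koszul-type complex of bundles built from tautological quotient and sub-bundles, and compute using Borel--Weil--Bott on the fibers. The constraint $mn - l^2 - s(m-n) - 2\sum_i t_i = j$ records the total homological shift coming from the Koszul differentials and the fiber cohomology, while the weight restrictions (\ref{res_weight_gen}) encode the regularity conditions under which Borel--Weil--Bott produces a nontrivial class in the expected degree. The shift $\lambda(s)$ translates between $\GL_n$-weights on the Grassmannian side and the corresponding $\GL_m$-weights via the standard change-of-variables in Bott's algorithm.

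The main obstacle is the third step: identifying precisely which Borel--Weil--Bott terms survive and pinning down the combinatorial constraints (\ref{res_weight_gen}), including the equality $\lambda_n = l - z_l - m$ that forces $t_{n-l} = l$, requires delicate bookkeeping of weights under repeated applications of Bott's algorithm together with a subtle vanishing/non-vanishing analysis on the Koszul complex. The first two steps are comparatively formal once the filtration has been chosen.
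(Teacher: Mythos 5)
This theorem is imported verbatim from the literature (\cite{RaicuWeyman} and \cite{Raicu}); the paper offers no proof, only a citation, so there is no in-paper argument to compare against. Judged on its own terms as a reconstruction of the cited sources' proof, your sketch captures the broad strategy correctly, but there are two places where the plan is misleadingly glossed.

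First, the filtration in step one is not produced by ``adding one partition at a time.'' The indexing set $\mathcal{Z}(\mathcal{X}^d_p)$ is not a subset of $\mathcal{X}^d_p$, and a generic single-generator refinement $\mathcal{X}_i \subset \mathcal{X}_i \cup \{\underline{z}\}$ produces a quotient $I_{\mathcal{X}_i \cup \{\underline{z}\}}/I_{\mathcal{X}_i}$ whose weight support is $\{\mu \geq \underline{z} : \mu \not\geq \lambda\ \forall \lambda \in \mathcal{X}_i\}$, which generally does not coincide with the support $\{\mu \geq \underline{z} : \mu_i = z_i \ \forall i > l\}$ of $J_{\underline{z},l}$. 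Arranging for the two to match for every step, and showing that the corners $(\underline{z},l)$ that arise are exactly those listed in Lemma~\ref{z(x)_set}, is precisely the combinatorial content of the filtration theorem in \cite{RaicuWeyman}; it is not ``comparatively formal,'' and your closing sentence misidentifies where the real work lies.

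Second, step three is plausible in flavor (Weyman-style geometric technique plus Borel--Weil--Bott) but not in detail: the cited papers do not obtain $\Ext^j_S(J_{\underline{z},l},S)$ by pushing $J_{\underline{z},l}$ itself forward from a Grassmannian bundle. The geometric input in \cite{RaicuWeyman} computes the local cohomology modules $H^{\bullet}_{I_p}(S)$ as $\GL$-representations, and \cite{Raicu} then extracts $\Ext^{\bullet}_S(J_{\underline{z},l},S)$ from that computation together with (graded local) duality and the filtration. Step two, the weight-disjointness argument forcing the spectral sequence of the filtration to degenerate into the direct sum (\ref{decompose_first}), is the one part of your outline that matches the literature's mechanism essentially as you describe it.
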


In the rest of the paper we will assume $p=n$, i.e. we only focus on the maximal minors case.

\begin{lemma}\label{weights_max_minors}
	In Theorem \ref{schur-decompos} we have $l=n-1$. Therefore the pair $(\underline{z},l)$ in Theorem \ref{schur-decompos} is of the form $((c)^n,n-1)$ for $c\leq d-1$. In particular we have $((d-1)^n,n-1)$ in $\mathcal{Z}^d_n$.
\end{lemma}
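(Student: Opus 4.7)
My plan is to work directly with the explicit parametrization of $\mathcal{Z}^d_n$ provided by Lemma \ref{z(x)_set} (specialized to $p=n$). Membership of a pair $(\underline{z},l)$ in $\mathcal{Z}^d_n$ requires $0\leq l\leq n-1$, $\underline{z}\in\mathcal{P}(n)$, $z_1=\cdots=z_{l+1}\leq d-1$, and the sandwich inequalities
\[
|\underline{z}| + (d-z_1)\cdot l + 1 \;\leq\; n\cdot d \;\leq\; |\underline{z}| + (d-z_1)\cdot (l+1).
\]
The entire content of the lemma is the claim that the upper index $l$ is pinned to $n-1$, so my strategy is to extract a sharp inequality from the right-hand bound and ram it against the constraint $z_i\leq d-1$.

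The key step is to split $|\underline{z}| = (l+1)z_1 + \sum_{i=l+2}^{n} z_i$ using the flat initial block $z_1=\cdots=z_{l+1}$. The coefficient of $z_1$ telescopes with $(d-z_1)(l+1)$, and the right-hand sandwich inequality collapses to
\[
d(n-l-1) \;\leq\; \sum_{i=l+2}^{n} z_i.
\]
Since $z_i\leq z_{l+1}=z_1\leq d-1$ for each $i\geq l+2$, the right-hand side is at most $(n-l-1)(d-1)$. Comparing the two gives $(n-l-1)\cdot 1 \leq 0$, forcing $n-l-1\leq 0$, hence $l=n-1$.

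Once $l=n-1$, the flatness condition $z_1=\cdots=z_{l+1}=z_1=\cdots=z_n$ compels $\underline{z}=(c^n)$ with $c=z_1\leq d-1$. A direct substitution will show the sandwich inequalities reduce to $c+(n-1)d+1\leq nd\leq nd$, i.e., simply to $c\leq d-1$, so every $c\in\{0,1,\ldots,d-1\}$ produces a valid element $((c^n),n-1)\in\mathcal{Z}^d_n$; in particular $c=d-1$ gives the distinguished element $((d-1)^n,n-1)$ that the lemma highlights. I do not anticipate any real obstacle here: the whole argument is an elementary inequality manipulation, and the only mildly subtle point is noticing that the telescoping in the right-hand bound, rather than the left one, is what cuts the degrees of freedom down to a single parameter $c$.
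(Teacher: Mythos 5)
Your proposal is correct and follows essentially the same approach as the paper: both use the explicit description of $\mathcal{Z}^d_n$ from Lemma~\ref{z(x)_set}, the right-hand sandwich inequality $nd \leq |\underline{z}| + (d-z_1)(l+1)$, and the constraint $z_i \leq d-1$ to force $l = n-1$, after which flatness $z_1 = \cdots = z_{l+1}$ pins down $\underline{z} = (c^n)$. The telescoping of $(l+1)z_1$ against $(d-z_1)(l+1)$ is a slightly cleaner bookkeeping than the paper's argument by contradiction, but it is the same computation reorganized.
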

\begin{proof}: Note the restriction $l\leq p-1$ gives $l\leq n-1$. It is easy to check that $((d-1)^n,n-1)$ is in $\mathcal{Z}^d_n$. On the other hand, assume that there exists $(\underline{z},l)$ in $\mathcal{Z}^d_n$ such that $l\leq n-2$. From Theorem \ref{schur-decompos} we have the restriction $$|\underline{z}|+(d-z_1)\cdot(l+1) \geq nd$$ when $p=n$. However by our assumption we have
\begin{align*}
    |\underline{z}|+(d-z_1)(l+1) &\leq |\underline{z}|+(d-z_1)(n-1)  \\&=  |\underline{z}|+d(n-1)-z_1(n-1) \\&\leq  nz_1+d(n-1)-z_1(n-1) \\&=   z_1+d(n-1) \\&\leq  d-1 +d(n-1) = nd - 1 < nd.
\end{align*}
Contradicting to our restriction. Therefore we must have $l=n-1$. Moreover, by the definition of $(\underline{z},l)$ we have $z_1=...=z_{l+1}$, therefore in our case we have $z_1=...=z_n$. So the $(\underline{z},l)$ is of the form $((c)^n,n-1)$ for $c\leq d-1$.
\end{proof} 
For the rest of section 3 and section 4 we will denote $I:=I_n$.
Using this information we can also gives a criterion of the vanishing of the $\Ext$ modules. Recall that the highest non-vanishing cohomological degree of $S/I^d_n$ is $n(m-n)+1$ (see \cite{Huneke}). This can be seen from the following lemma as well.
\begin{lemma}\label{vanishing_degree}
	In our setting $\Ext^j_S(S/I^d,S)\neq 0$ if and only if $m-n$ divides $1-j$ and $j\geq 2$. Moreover, $\Ext^{n(m-n)+1}_S(S/I^d,S)\neq 0$ if and only if $d\geq n$.
\end{lemma}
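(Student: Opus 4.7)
The plan is to read off both claims directly from Theorem \ref{schur-decompos} once it has been specialized via Lemma \ref{weights_max_minors}. That lemma shows every pair $(\underline{z},l)\in \mathcal{Z}^d_n$ has the form $((c)^n,n-1)$ with $0 \leq c \leq d-1$, so throughout I may take $l = n-1$. Since $n-l=1$, the tuple $\underline{t}$ reduces to a single entry $t_1 \leq l = n-1$, and the equality $t_{n-l}=l$ asserted at the end of Theorem \ref{schur-decompos} forces $t_1 = n-1$ and $\lambda_n = l - z_l - m = (n-1) - c - m$.

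Substituting $l = n-1$ and $t_1 = n-1$ into the cohomological-degree formula yields
\[ j = mn - (n-1)^2 - s(m-n) - 2(n-1) = (n-s)(m-n)+1, \]
and since $0 \leq s \leq n-1$, the only admissible values are $j = 1+k(m-n)$ for $k := n-s \in \{1,\dots,n\}$. This immediately gives $m-n \mid 1-j$ and $j \geq 1+(m-n) \geq 2$, which handles the necessary direction.

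For the converse I would show that the set $W((c)^n,n-1;n-1,s)$ defined by (\ref{res_weight_gen}) is non-empty for some $0 \leq c \leq d-1$. Beyond the forced equality $\lambda_n=(n-1)-c-m$, the remaining constraints on $\lambda \in \mathbb{Z}^n_{\text{dom}}$ are $\lambda_{s+1}\leq s-m$ and, when $s\geq 1$, $\lambda_s \geq s-n$. Taking $\lambda_1=\cdots=\lambda_s=s-n$ and $\lambda_{s+1}=\cdots=\lambda_{n-1}=s-m$ satisfies every linear inequality, and the dominance check $\lambda_{n-1}\geq \lambda_n$ collapses precisely to $c \geq n-1-s$. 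Hence a non-zero summand exists as soon as some $c \in \{n-1-s,\dots,d-1\}$ is available, i.e.\ whenever $d \geq n-s$.

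Specializing to the top degree $j = n(m-n)+1$, only $s=0$ contributes, and the inequality above becomes $c \geq n-1$, which can be met within $\{0,\dots,d-1\}$ if and only if $d \geq n$; this yields the ``moreover'' part. The main obstacle will be the dominance check in the third paragraph: the forced equality on $\lambda_n$ must be reconciled with $\lambda_{s+1}\leq s-m$, and it is exactly this interaction that pins the existence of $\lambda$ to a lower bound on $c$ (and hence on $d$). Once that is handled, everything else is routine bookkeeping inside the Schur-module decomposition.
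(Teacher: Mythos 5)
Your argument is correct and tracks the paper's proof closely: both specialize via Lemma~\ref{weights_max_minors} to force $l=n-1$ and $t_1=n-1$, read off $j=(n-s)(m-n)+1$ from the cohomological-degree formula to get the divisibility condition together with $j\geq 2$, and for the ``moreover'' part derive a contradiction with dominance when $d<n$. Your explicit witness weight $\lambda_1=\cdots=\lambda_s=s-n$, $\lambda_{s+1}=\cdots=\lambda_{n-1}=s-m$, $\lambda_n=(n-1)-c-m$, together with the observation that this pins non-vanishing to $d\geq n-s$, fills in the existence direction more completely than the paper does (the paper leaves non-emptiness of $W$ implicit) and makes explicit the $d$-dependence that the lemma's first ``if and only if'' passes over.
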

\begin{proof}
	By Lemma \ref{weights_max_minors}, the weights $\lambda\in W:=W(\underline{z},n-1,(n-1),s)$ have the restrictions
	\begin{align}\label{res_weight_max}
		\begin{cases}
		\lambda_n=n-1-z_{n-1}-m,\\
		\lambda_s\geq s-n \text{ and } \lambda_{s+1}\leq s-m.
		\end{cases}
	\end{align}
	We also have 
	\begin{align}\label{important_eq}
		mn-(n-1)^2-s(m-n)-2(n-1)=j \implies s(m-n)=n(m-n)+1-j.
	\end{align}
	By Theorem \ref{schur-decompos}, $\Ext^j_S(S/I^d_n,S)\neq 0$ if and only if the set $W$ is not empty, then by (\ref{res_weight_max}) and (\ref{important_eq}) this means $m-n$ divides $n(m-n)+1-j \implies m-n$ divides $1-j$ and $s=(n(m-n)+1-j)/(m-n) \leq l = n-1 \implies j\geq 2$. This proves the first statement.

	To see the second statement, note that when $j=n(m-n)+1 = mn-(n-1)^2-2(n-1)$ we have $s=0$. In this case we have the restriction
	\begin{align*}
		\begin{cases}
			\lambda_n = n-1-z_{n-1}-m,\\
			\lambda_1 \leq -m.
		\end{cases}
	\end{align*}
	If $d<n$, then $\lambda_n \geq n-d-m > -m \geq \lambda_1$, a contradiction, so that means the set $W$ is empty. On the other hand if $d\geq n$ then $W$ is not empty. So $\Ext^{n(m-n)+1}_S(S/I^d,S) \neq 0$ if and only if $d\geq n$. 
\end{proof}

In our proof of the main theorem, we will need an important property of the $\Ext$-modules, which only holds for maximal minors.

\begin{prop}\label{isom_ext_modules}(See \cite[Corollary 4.4]{RaicuWeymanWitt})
	We have $\Hom_S(I^d,S)=S$, $\Ext^1(S/I^d,S)=0$ and $\Ext^{j+1}_S(S/I^d,S) = \Ext^j_S(I^d,S)$ for $j>0$.
\end{prop}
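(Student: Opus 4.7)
The plan is to apply the Ext long exact sequence to the short exact sequence
\[
0 \longrightarrow I^d \longrightarrow S \longrightarrow S/I^d \longrightarrow 0.
\]
Since $S$ is a polynomial ring (hence free and in particular projective over itself), $\Ext^j_S(S,S)=0$ for all $j>0$ and $\Hom_S(S,S)=S$. Moreover $I^d$ is a nonzero ideal in the domain $S$, so every nonzero element is a nonzerodivisor on $S$, which forces $\Hom_S(S/I^d,S)=0$. Feeding these vanishings into the long exact sequence immediately yields the isomorphisms $\Ext^{j+1}_S(S/I^d,S)\cong \Ext^j_S(I^d,S)$ for every $j\geq 1$, and it collapses the beginning of the sequence to
\[
0\longrightarrow S\longrightarrow \Hom_S(I^d,S)\longrightarrow \Ext^1_S(S/I^d,S)\longrightarrow 0.
\]

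The only nontrivial input that remains is the vanishing $\Ext^1_S(S/I^d,S)=0$. This is precisely where the maximal-minors hypothesis enters, and it is handed to us by Lemma \ref{vanishing_degree}: the non-vanishing criterion there requires $j\geq 2$, so the case $j=1$ is automatically zero (regardless of the divisibility condition on $m-n$). Once this vanishing is in hand, the displayed three-term exact sequence simplifies to $\Hom_S(I^d,S)\cong S$, completing the proposition.

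The main obstacle would ordinarily be the vanishing of $\Ext^1_S(S/I^d,S)$, but in our situation this is a direct consequence of the decomposition in Theorem \ref{schur-decompos} already encoded in Lemma \ref{vanishing_degree}, so the argument reduces to essentially routine bookkeeping with the long exact sequence. No additional computation with Schur functors or with the sets $\mathcal{Z}^d_n$ is needed beyond what Lemma \ref{vanishing_degree} already provides.
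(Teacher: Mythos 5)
Your argument is correct and self-contained, but it takes a different route from the cited source. The paper itself offers no proof of this proposition, deferring entirely to \cite[Corollary 4.4]{RaicuWeymanWitt}; there the statement is a consequence of the elementary grade computation $\operatorname{grade}(I_n^d,S)=\operatorname{grade}(I_n,S)=m-n+1\geq 2$ (powers share a radical, and $S$ is Cohen--Macaulay), which gives $\Hom_S(S/I^d,S)=\Ext^1_S(S/I^d,S)=0$ directly via the $\Ext$-characterization of grade, after which the long exact sequence does the rest. You instead extract the two vanishings from the domain property of $S$ and from Lemma~\ref{vanishing_degree}, the latter being a byproduct of the Schur-functor decomposition of Theorem~\ref{schur-decompos}. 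Logically this is fine — Lemma~\ref{vanishing_degree} is established before Proposition~\ref{isom_ext_modules} and does not depend on it, so there is no circularity, and the necessary condition $j\geq 2$ there really does force $\Ext^1_S(S/I^d,S)=0$. The trade-off is that you are invoking the full equivariant decomposition machinery to get a fact that a one-line grade estimate already gives, and that grade estimate works for all $p$, not just $p=n$; your phrasing that this ``is precisely where the maximal-minors hypothesis enters'' slightly overstates the role of maximality, which is used here only because Lemma~\ref{vanishing_degree} happens to be stated for $p=n$. As a derivation internal to this paper's toolkit, your proof is valid; as an explanation of why the result is true, the grade argument is the cleaner and more general one.
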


\begin{lemma}(See \cite[Theorem 4.5]{RaicuWeymanWitt})\label{injectivity}
	Given the short exact sequence 
	$$0\rightarrow I^{d}\ \rightarrow I^{d-1} \rightarrow I^{d-1}/I^d \rightarrow 0,$$ 
	the induced map $$\Ext^j_S(I^{d-1},S)\hookrightarrow \Ext^j_S(I^{d},S)$$ is injective for any $j$ such that $\Ext^j_S(I^d,S)\neq 0$.
\end{lemma}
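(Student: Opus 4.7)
The plan is to deduce injectivity from the vanishing of a connecting homomorphism in a long exact sequence of $\Ext$, by exhibiting a $\GL$-type obstruction. The case $j=0$ is immediate: Proposition \ref{isom_ext_modules} gives $\Hom_S(I^{d-1},S) \cong S \cong \Hom_S(I^d,S)$, and the map induced by the inclusion $I^d \hookrightarrow I^{d-1}$ is the identity of $S$, hence injective. So I focus on $j \geq 1$.

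For $j \geq 1$, Proposition \ref{isom_ext_modules} identifies $\Ext^j_S(I^e,S) \cong \Ext^{j+1}_S(S/I^e,S)$ for $e \in \{d-1,d\}$, so the task reduces to showing that the natural map $\Ext^{j+1}_S(S/I^{d-1},S) \to \Ext^{j+1}_S(S/I^d,S)$ is injective. This map is the middle arrow of the long exact sequence associated to
\[
0 \to I^{d-1}/I^d \to S/I^d \to S/I^{d-1} \to 0,
\]
so its kernel equals the image of the connecting map
\[
\delta \colon \Ext^j_S(I^{d-1}/I^d, S) \longrightarrow \Ext^{j+1}_S(S/I^{d-1},S).
\]
Thus it suffices to prove $\delta = 0$.

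The key will be a $\GL$-isotypic separation. Combining Theorem \ref{schur-decompos} with Lemma \ref{weights_max_minors}, every $\GL$-irreducible appearing in $\Ext^*_S(S/I^D,S)$ comes from a pair $((c)^n, n-1)$ with $0 \leq c \leq D-1$, and the relation $\lambda_n = l - z_l - m$ in Theorem \ref{schur-decompos} forces $\lambda_n = n - 1 - c - m$. In particular, every irreducible in $\Ext^{j+1}_S(S/I^{d-1},S)$ satisfies $\lambda_n \geq n + 1 - d - m$. On the other hand, I would argue that the only ``new'' pair $\mathcal{Z}^d_n \setminus \mathcal{Z}^{d-1}_n = \{((d-1)^n, n-1)\}$ is the one contributing to the $\GL$-character of $I^{d-1}/I^d$, so every irreducible in $\Ext^j_S(I^{d-1}/I^d, S)$ has $\lambda_n = n - d - m$. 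Since $n-d-m < n+1-d-m$, the two ranges are disjoint and $\delta$ is $\GL$-equivariant, so $\delta = 0$.

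The hardest step will be the identification at the end of the previous paragraph: one must verify that the filtration supplied by Theorem \ref{schur-decompos} can be refined so that $I^{d-1}/I^d$ appears as a filtration step, or equivalently that $I^{d-1}/I^d \cong J_{((d-1)^n, n-1)}$ as a $\GL$-equivariant module. This should follow by comparing the filtrations for $\mathcal{X}^{d-1}_n \subset \mathcal{X}^d_n$ and tracking which factors survive in the quotient, but it is where the combinatorics of \cite{Raicu} actually has to be used carefully.
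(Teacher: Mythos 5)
The paper does not prove this lemma; it simply cites \cite[Theorem 4.5]{RaicuWeymanWitt}. Your argument is therefore a genuine self-contained alternative built on the machinery already present in the paper, and it is correct.

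Your route is: pass via Proposition \ref{isom_ext_modules} to the map $\Ext^{j+1}_S(S/I^{d-1},S)\to\Ext^{j+1}_S(S/I^{d},S)$, show the connecting map $\delta\colon\Ext^j_S(I^{d-1}/I^d,S)\to\Ext^{j+1}_S(S/I^{d-1},S)$ is a $\GL$-equivariant map between representations with disjoint isotypic components, and conclude $\delta=0$. The isotypic separation you invoke is exactly right: for $\underline z=((c)^n)$ the relation $\lambda_n=l-z_l-m$ forces $\lambda_n=n-1-c-m$, so the source $(c=d-1)$ has $\lambda_n=n-d-m$ while every summand of the target $(c\le d-2)$ has $\lambda_n\ge n+1-d-m$; Schur's lemma does the rest. (In the case $m-n\ge 2$ the argument is even more immediate, since by \eqref{important_eq} the degrees $j$ and $j+1$ cannot both correspond to valid $s$-values, so the target is already zero.) What you flag as ``the hardest step'' is in fact cleaner than you anticipate: for maximal minors $\mathcal X^{d}_{n}$ is the singleton $\{(d^n)\}$, so $I^{d}=I_{(d^n)}$ and $I^{d-1}=I_{((d-1)^n)}$ are single $\GL$-orbits, and a direct computation gives $\mathfrak{succ}(((d-1)^n),n-1)=\{\underline y:\underline y\ge(d^n)\}$, hence $I_{\mathfrak{succ}(((d-1)^n),n-1)}=I_{(d^n)}=I^{d}$. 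Thus $J_{((d-1)^n,n-1)}=I_{((d-1)^n)}/I_{(d^n)}=I^{d-1}/I^{d}$ as an equality of $\GL$-equivariant $S$-modules, not merely a filtration refinement. With that observation filled in, the proof is complete and arguably more transparent than a black-box citation; its only limitation is that it uses the maximal-minor hypothesis (via the singleton $\mathcal X^d_n$ and $l=n-1$) in an essential way, so unlike the original \cite{RaicuWeymanWitt} treatment it does not obviously transfer to other $\GL$-invariant ideals.
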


In order to prove our main theorem, we need to investigate the length of the $\Ext$-modules. We will need the following fact.

\begin{lemma}\label{length=dim}
	Given a graded $S$-module $M$ we have $\ell(M) = \Dim_\mathbb{C}(M)$.
\end{lemma}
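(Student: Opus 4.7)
The plan is to reduce the equality to the classification of simple graded $S$-modules and to invoke additivity of both length and $\mathbb{C}$-dimension along a composition series. Throughout I will interpret the equation as an identity in $\mathbb{Z}_{\geq 0}\cup\{\infty\}$, so that it suffices to show (a) the two sides are finite simultaneously, and (b) they agree whenever either is finite.

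First I would classify the simple objects in the category of $\mathbb{Z}$-graded $S$-modules. If $N$ is such a simple module and $0\neq v\in N$ is homogeneous, then $Sv$ is a nonzero graded submodule, so $N=Sv\cong S/\operatorname{Ann}(v)$, and $\operatorname{Ann}(v)$ is a homogeneous ideal. Simplicity of $N$ forces $\operatorname{Ann}(v)$ to be a maximal homogeneous ideal of $S$; since $S=\mathbb{C}[x_{ij}]$ is a polynomial ring over $\mathbb{C}$ with the standard grading, the only such ideal is $\frakm=(x_{ij})$. Hence $N\cong (S/\frakm)(-i)\cong \mathbb{C}(-i)$ for some $i\in\mathbb{Z}$, and in particular $\Dim_{\mathbb{C}}(N)=1$.

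Next I would run the graded Jordan–Hölder argument. Suppose $\ell(M)<\infty$; then, working inside the abelian category of graded $S$-modules, any maximal chain of homogeneous submodules $0=M_0\subsetneq M_1\subsetneq\cdots\subsetneq M_\ell=M$ has simple graded factors $M_i/M_{i-1}$, each contributing exactly $1$ to $\Dim_{\mathbb{C}}$ by the preceding paragraph. Additivity of $\mathbb{C}$-dimension on short exact sequences then yields $\Dim_{\mathbb{C}}(M)=\ell=\ell(M)$. Conversely, if $\Dim_{\mathbb{C}}(M)<\infty$, pick a homogeneous $\mathbb{C}$-basis of $M$ and take successive $S$-spans of initial segments to build a chain of graded submodules whose composition factors are each one-dimensional over $\mathbb{C}$ and hence simple; this shows $\ell(M)\le\Dim_{\mathbb{C}}(M)<\infty$, and the first case then gives equality. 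If both sides are infinite there is nothing to prove.

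The argument has no real obstacle beyond the identification of simples; the only subtle point is making sure one works with graded submodules and the graded composition series throughout, so that the only simple factor that can appear is a shift of the residue field $\mathbb{C}$. Once that is in place, additivity of $\ell$ and of $\Dim_{\mathbb{C}}$ on short exact sequences does all the work.
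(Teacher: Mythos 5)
Your proof is correct and covers the same ground as the paper's, but organizes it differently. The paper's argument is hands-on: starting from the top graded piece $M_\alpha$, it observes that a homogeneous element $x$ of maximal degree satisfies $\frakm x = 0$ (multiplication strictly raises degree), so $Sx \cong S/\frakm$, and it assembles a composition series directly from a homogeneous basis taken in decreasing order of degree. You instead front-load the abstract ingredient — that every simple graded $S$-module is a shift $\mathbb{C}(-i)$ of the residue field, since $\frakm$ is the unique maximal homogeneous ideal — and then appeal to Jordan--H\"older plus additivity of $\Dim_\mathbb{C}$. This makes the ``both finite or both infinite'' bookkeeping cleaner and isolates the real content as a one-line classification of graded simples, which is arguably the more reusable statement; the paper's version is more elementary but leaves the classification implicit in the construction. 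Two small points worth tightening. First, you silently identify the length of $M$ in the graded category with the ungraded length $\ell(M)$; this is true because a graded composition series with factors $\mathbb{C}(-i)$ is also an ungraded composition series (those factors are simple ungraded), so Jordan--H\"older in the ungraded category gives the same number, but the remark should be made. Second, in the converse direction, for the successive $S$-spans of initial basis segments to grow by exactly one dimension per step you must order the homogeneous basis by non-increasing degree, as the paper's proof does; alternatively, for the inequality $\ell(M) \le \Dim_\mathbb{C}(M)$ that you actually invoke, the simpler observation that any strictly ascending chain of $S$-submodules is a strictly ascending chain of $\mathbb{C}$-subspaces already suffices and avoids the ordering issue entirely.
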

\begin{proof}
	First assume $M$ is finitely graded over $\mathbb{C}$ and write $M=\oplus_i^\alpha M_i$. We will use the $\mathbb{C}$-vector space basis of each $M_i$ to construct the composition series of $M$ over $S$. Suppose $M_\alpha = \operatorname{span}(x_1,...,x_r)$ and consider the series $$0\subsetneq \operatorname{span}(x_1) \subsetneq \operatorname{span}(x_1,x_2) \subsetneq ... \subsetneq \operatorname{span}(x_1,...x_r)=M_\alpha.$$ Note that each $x_i$ can be annihilated by the maximal ideal $\frakm$ of $S$ since multiplying $x_i$ with elements in $\frakm$ will increase the degree. Since $Sx_i$ is cyclic, we have $Sx_i \cong S/\frakm$. Therefore each quotient of the above series is isomorphic to $S/\frakm$, so the series above is a composition series. Repeat this procedure for each graded piece of $M$ we get a composition series of $M$ and that $\ell(M) = \Dim_\mathbb{C}(M)$. 

	On the other hand if $M$ has infinitely many graded pieces over $\mathbb{C}$ so that $\Dim_\mathbb{C}(M) = \infty$, then the above argument shows that we can form a composition series of infinite length, and so $\ell(M)=\infty$.
\end{proof}

\begin{prop}\label{length_ext_fin}
	In our setting $\ell(\Ext^j_S(S/I^d,S)) < \infty$ and is nonzero if and only if $j=n(m-n)+1$ which corresponds to $s=0$ in Theorem \ref{schur-decompos}, and $d\geq n$. 
\end{prop}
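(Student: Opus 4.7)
The plan is to combine the $\GL$-equivariant decomposition in Theorem \ref{schur-decompos} with Lemma \ref{weights_max_minors} (which forces $l=n-1$ and $\underline{z}=(c^n)$ for $0\leq c\leq d-1$) and then translate length into $\mathbb{C}$-dimension via Lemma \ref{length=dim}. By Lemma \ref{vanishing_degree} the Ext module vanishes unless $j\geq 2$ and $(m-n)\mid(1-j)$, in which case $s=(n(m-n)+1-j)/(m-n)\in\{0,1,\ldots,n-1\}$; so I only need to analyze these cases, splitting according to whether $s=0$ or $s\geq 1$.

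First I would treat $s=0$, equivalently $j=n(m-n)+1$. The restrictions (\ref{res_weight_max}) reduce to $\lambda_n=n-1-c-m$ and $\lambda_1\leq -m$ (the condition $\lambda_s\geq s-n$ is vacuous when $s=0$). Since $\lambda$ is dominant, every coordinate $\lambda_i$ lies in the finite interval $[n-1-c-m,\,-m]$, so the weight set $W((c^n),n-1,(n-1),0)$ is finite. Each Schur summand $S_{\lambda(0)}\mathbb{C}^m\otimes S_\lambda\mathbb{C}^n$ is finite-dimensional by Proposition \ref{dim_schur}, and summing over the finitely many values $c\in\{0,\ldots,d-1\}$ gives a finite total $\mathbb{C}$-dimension, hence finite length by Lemma \ref{length=dim}. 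Nonvanishing of $\Ext^{n(m-n)+1}_S(S/I^d,S)$ is exactly the second statement of Lemma \ref{vanishing_degree}, and requires $d\geq n$.

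Next, for $s\geq 1$ with $\Ext^j_S(S/I^d,S)\neq 0$, the restrictions (\ref{res_weight_max}) still pin down $\lambda_n=n-1-c-m$ and impose $\lambda_s\geq s-n$, $\lambda_{s+1}\leq s-m$, but they place no upper bound on $\lambda_1,\ldots,\lambda_s$. Assuming the set $W((c^n),n-1,(n-1),s)$ is nonempty for some admissible $c$, I would exhibit an explicit infinite family of distinct dominant weights in $W((c^n),n-1,(n-1),s)$, e.g.\ by fixing $\lambda_{s+1},\ldots,\lambda_n$ at admissible values and letting $\lambda_1=\cdots=\lambda_s=k$ range over integers $k\geq\max(s-n,\lambda_{s+1})$. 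Each produces a nonzero Schur summand $S_{\lambda(s)}\mathbb{C}^m\otimes S_\lambda\mathbb{C}^n$ (Proposition \ref{dim_schur}), so $\Dim_\mathbb{C}\Ext^j_S(S/I^d,S)=\infty$ and Lemma \ref{length=dim} gives infinite length. Combining the two cases yields the claimed equivalence.

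The main verification needed is that in the $s\geq 1$ construction the chosen weights genuinely sit in $W((c^n),n-1,(n-1),s)$ and that the associated $\lambda(s)\in\mathbb{Z}^m_{\text{dom}}$ is indeed dominant; both follow from the constraints $\lambda_s\geq s-n$ and $\lambda_{s+1}\leq s-m$ already encoded in (\ref{res_weight_max}), so this amounts to bookkeeping rather than a real obstacle. The only subtle point worth flagging is the need to exhibit one nonempty weight set so the infinite family is nonvacuous, which is guaranteed by the standing assumption that $\Ext^j\neq 0$.
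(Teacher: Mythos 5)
Your proposal is correct and follows essentially the same strategy as the paper: use Lemma \ref{vanishing_degree} for nonvanishing and the $d\geq n$ condition, then show the weight set $W$ is finite when $s=0$ (bounded above by $-m$ and below by $n-1-c-m$) and infinite when $s\geq 1$ (no upper bound on $\lambda_1,\ldots,\lambda_s$), and conclude via Lemma \ref{length=dim}. Your extra care in explicitly constructing the infinite family of weights and verifying that $\lambda(s)$ is dominant is sound bookkeeping that the paper leaves implicit, but the underlying argument is the same.
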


\begin{proof}
	The correspondence of the cohomological index and $s$ can be seem in the proof of Lemma \ref{vanishing_degree}, and the condition $d\geq n$ can be seen from Lemma \ref{vanishing_degree} as well. Observe that the decomposition (\ref{decompose_first}) is finite, so we need to consider the decomposition of each $\Ext^j_S(J_{(\underline{z},l)},S)$. Suppose $s=0$. Then we have the restriction 
	\begin{align*}
		\begin{cases}
			\lambda_n=n-1-{z_{n-1}}-m,\\
			\lambda_1 \leq -m.
		\end{cases}
	\end{align*}
	Therefore in this case the set $W(\underline{z},n-1,(n-1),0)$ is bounded above by $(-m,...,-m,n-1-z_{n-1}-m)$ and below by $(-m,n-1-z_{n-1}-m,...,,n-1-z_{n-1}-m)$ and so is a finite set. Thus $\Ext^j_S(J_{(\underline{z},l)},S)$ can be decomposed as a finite direct sum of $S_{\lambda(s)}\mathbb{C}^m \otimes S_\lambda\mathbb{C}^n$ for $\lambda \in W(\underline{z},n-1,(n-1),0)$. By Proposition \ref{dim_schur} it is clear that the dimension of each $S_{\lambda(s)}\mathbb{C}^m \otimes S_\lambda\mathbb{C}^n$ is finite. So by Lemma \ref{length=dim}, $\ell(\Ext^j_S(J_{(\underline{z},l)},S)) = \Dim_\mathbb{C}(\Ext^j_S(J_{(\underline{z},l)},S)) < \infty$.

	On the other hand suppose $s\neq 0$. Then we have the restriction 
	\begin{align*}
		\begin{cases}
			\lambda_n=n-1-{z_{n-1}}-m,\\
			\lambda_s \geq s-n, \lambda_{s+1} \leq s-m.
		\end{cases}
	\end{align*}
	Since $\lambda_s \geq s-n$ implies that any weight that is greater than $(s-n,...,s-n,s-m,,...,s-m,n-1-z_{n-1}-m)$ is in $W(\underline{z},n-1,(n-1),s)$, the set $W(\underline{z},n-1,(n-1),s)$ is infinite, and therefore the decomposition of $\Ext^j_S(J_{(\underline{z},l)},S)$ in this case is infinite. So by Lemma \ref{length=dim} again we have $\ell(\Ext^j_S(J_{(\underline{z},l)},S)) = \Dim_\mathbb{C}(\Ext^j_S(J_{(\underline{z},l)},S)) = \infty$. 
	Therefore $\ell(\Ext^j_S(S/I^d,S)) < \infty$ if and only if $j=n(m-n)+1$.
\end{proof}

\begin{corollary}\label{sum_of_length}
	Let $j=n(m-n)+1$. Then we have $$\ell(\Ext^j_S(S/I^D,S)) = \sum^{D}_{d=n}\ell(\Ext^j_S(I^{d-1}/I^d,S)).$$
\end{corollary}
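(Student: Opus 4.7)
My plan is to build the identity by a telescoping induction on $d$. The starting point is the short exact sequence $0 \to I^{d-1}/I^d \to S/I^d \to S/I^{d-1} \to 0$, from which applying $\Hom_S(-,S)$ yields the long exact Ext sequence. Since the highest nonvanishing cohomological index of $\Ext^\bullet_S(S/I^{d-1},S)$ is $j = n(m-n)+1$ (by the remark preceding Lemma~\ref{vanishing_degree}, or by Lemma~\ref{vanishing_degree} itself), the term $\Ext^{j+1}_S(S/I^{d-1},S)$ vanishes, so the relevant portion of the sequence collapses to
\begin{equation*}
\Ext^j_S(S/I^{d-1},S) \xrightarrow{\alpha_d} \Ext^j_S(S/I^d,S) \to \Ext^j_S(I^{d-1}/I^d,S) \to 0.
\end{equation*}

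The heart of the argument is to show that $\alpha_d$ is injective for every $d \geq n$. Since $j \geq 2$, Proposition~\ref{isom_ext_modules} lets me identify $\alpha_d$ with the natural map $\Ext^{j-1}_S(I^{d-1},S) \to \Ext^{j-1}_S(I^d,S)$ induced by the inclusion $I^d \hookrightarrow I^{d-1}$. Combining Proposition~\ref{isom_ext_modules} with Lemma~\ref{vanishing_degree}, the target $\Ext^{j-1}_S(I^d,S) \cong \Ext^j_S(S/I^d,S)$ is nonzero exactly when $d \geq n$, so Lemma~\ref{injectivity} is applicable precisely in this range and delivers the required injectivity.

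Once this is established, for each $d \geq n$ the displayed sequence becomes a short exact sequence of finite-length modules (finiteness from Proposition~\ref{length_ext_fin}), so length additivity gives the recursion
\begin{equation*}
\ell(\Ext^j_S(S/I^d,S)) = \ell(\Ext^j_S(S/I^{d-1},S)) + \ell(\Ext^j_S(I^{d-1}/I^d,S)).
\end{equation*}
To close the induction I only need the base case $\ell(\Ext^j_S(S/I^{n-1},S)) = 0$, which is immediate from Lemma~\ref{vanishing_degree}. Iterating the recursion from $d = n$ up to $d = D$ then produces the claimed telescoping sum. I do not anticipate a serious obstacle; the only subtlety is aligning the range of validity of Lemma~\ref{injectivity} with the range where the Ext modules are nonzero, which is exactly what Proposition~\ref{isom_ext_modules} buys us.
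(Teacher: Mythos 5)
Your proof is correct and follows essentially the same route as the paper: the short exact sequence $0 \to I^{d-1}/I^d \to S/I^d \to S/I^{d-1} \to 0$, injectivity of the connecting map from Proposition~\ref{isom_ext_modules} and Lemma~\ref{injectivity}, the base case from Lemma~\ref{vanishing_degree}, and a telescoping sum. One small improvement over the paper's write-up: you explicitly justify surjectivity onto $\Ext^j_S(I^{d-1}/I^d,S)$ by noting that $\Ext^{j+1}_S(S/I^{d-1},S)=0$ since $j=n(m-n)+1$ is the top nonvanishing cohomological degree, a point the paper leaves implicit.
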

\begin{proof}
	Given the short exact sequence $$0\rightarrow I^{d-1}/I^d \rightarrow S/I^d \rightarrow S/I^{d-1} \rightarrow 0$$ we have the induced long exact sequence of $\Ext$-modules 
	\begin{align*}
		... \rightarrow &\Ext^{j-1}_S(I^{d-1}/I^d,S) \rightarrow \Ext^j_S(S/I^{d-1},S) \rightarrow \Ext^j_S(S/I^d,S)\\ \rightarrow &\Ext^j_S(I^{d-1}/I^d,S) \rightarrow \Ext^{j+1}_S(S/I^{d-1},S) \rightarrow ...
	\end{align*}
	By  Proposition \ref{isom_ext_modules} and Lemma \ref{injectivity} the map $\Ext^j(S/I^{d-1},S) \rightarrow \Ext^j(S/I^d,S)$ from the above long exact sequence is injective. Therefore we can split the above long exact sequence into short exact sequences $$0\rightarrow \Ext^j_S(S/I^{d-1},S) \rightarrow \Ext^j_S(S/I^d,S) \rightarrow \Ext^j_S(I^{d-1}/I^d,S) \rightarrow 0.$$ 
	By Lemma 3.3, $\Ext^j_S(S/I^d,S) = 0$ for $d<n$, so $\Ext^j_S(I^{d-1}/I^d) = 0$ for $d<n$ as well. Then by Proposition \ref{length_ext_fin} we have 
	\begin{align*}
		\begin{gathered}
			\ell(\Ext^j_S(I^{d-1}/I^d,S)) = \ell(\Ext^j_S(S/I^d,S)) - \ell(\Ext^j_S(S/I^{d-1},S)) \implies  \\
			\sum^D_{d=2} \ell(\Ext^j_S(I^{d-1}/I^d,S) = \ell(\Ext^j_S(S/I^D,S))-\ell(\Ext^j_S(S/I,S) \xRightarrow{\text{Lemma 3.3}}\\
			\sum^D_{d=n}\ell(\Ext^j_S(I^{d-1}/I^d,S)) = \ell(\Ext^j_S(S/I^D,S)),
		\end{gathered}
	\end{align*}
	as desired.
\end{proof}

\section{Multiplicites of the maximal minors}
In this section we will prove the main result for maximal minors. We recall the statement here, and recall that $I := I_n$.

\begin{theorem}\label{formula_limit}
Under the setting as in section 3, we have
\medspace
	\begin{enumerate}
		\item $j \neq n^2-1$ then $\ell(H^j(I^{d-1}/I^d))$ and $\ell(H^j(S/I^D))$ are either zero or infinite.
		\item If $j=n^2-1$ then $\ell(H^j_\frakm(I^{d-1}/I^d))$ and $\ell(H^j_\frakm(S/I^D))$ are finite and nonzero for $d$ and $D$ greater than $n$.  Moreover we have $$\frakj^j(I) = (mn)!\prod^{n-1}_{i=0}\dfrac{i!}{(m+i)!}$$ 
		\item In fact $$\frakj^j(I) = \epsilon^j(I).$$
	\end{enumerate}
\end{theorem}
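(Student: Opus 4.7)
The plan is to translate everything to the Ext side via graded local duality $\ell(H^j_\frakm(M)) = \ell(\Ext^{mn-j}_S(M,S))$ (valid since $S$ is Cohen--Macaulay of dimension $mn$) and then apply the decomposition results of Section 3 together with an asymptotic integral calculation. Part (1) will follow directly: by Proposition \ref{length_ext_fin}, $\ell(\Ext^{mn-j}_S(S/I^D, S))$ is finite and nonzero only when $mn - j = n(m-n)+1$, equivalently $j = n^2 - 1$, and the analogous dichotomy for $I^{d-1}/I^d$ will be inherited via the short exact sequence $0 \to I^{d-1}/I^d \to S/I^d \to S/I^{d-1} \to 0$ and the injectivity from Lemma \ref{injectivity} (which splits the long exact Ext sequence into short ones).

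For Part (2) I will compute $\ell(\Ext^{n(m-n)+1}_S(I^{d-1}/I^d, S))$ explicitly. By Corollary \ref{sum_of_length}, this length equals $\ell(\Ext^{n(m-n)+1}_S(J_{((d-1)^n, n-1)}, S))$. Theorem \ref{schur-decompos}, specialized via Lemma \ref{weights_max_minors} to $(\underline{z}, l) = ((d-1)^n, n-1)$ with $s = 0$ and $t_1 = n-1$, expresses this as a finite direct sum of $S_{\lambda(0)}\CC^m \otimes S_\lambda\CC^n$ over dominant weights $\lambda$ with $\lambda_n = n - d - m$ fixed and $\lambda_1 \leq -m$. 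Reparameterizing $\nu_i := -\lambda_{n+1-i} - m$, these weights correspond to partitions $\nu$ of length at most $n$ with $\nu_1 = d - n$ and all parts in $[0, d-n]$. Applying Proposition \ref{dim_schur} to both dimension factors yields a closed expression
\begin{equation*}
\ell(\Ext^{n(m-n)+1}_S(I^{d-1}/I^d, S)) = \sum_{\nu} \dim(S_\nu \CC^n) \cdot \dim(S_{\lambda(0)} \CC^m),
\end{equation*}
and a degree count confirms that the summands grow like $d^{nm - n}$ while the index set has size $\sim d^{n - 1}$, producing the expected $d^{mn - 1}$ scaling.

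Rescaling $\nu_i = d \cdot y_i$, the Riemann sum converges to an integral over the simplex $\{1 = y_1 \geq y_2 \geq \cdots \geq y_n \geq 0\}$ of the form
\begin{equation*}
C \cdot \int \prod_{1 \leq i < j \leq n}(y_i - y_j)^2 \prod_{a=1}^n y_a^{m-n} \, dy_2 \cdots dy_n
\end{equation*}
with $y_1 = 1$ fixed, where $C$ is an explicit constant coming from the denominators in the Weyl dimension formula. Separating $\prod_{j \geq 2}(1 - y_j)^2$ out of the full Vandermonde factor and symmetrizing the ordered simplex reduces this to a standard Selberg integral in $n-1$ variables, which I will evaluate and simplify using Gamma-function identities to match the claimed Grassmannian degree $(mn)! \prod_{i=0}^{n-1} i!/(m+i)!$. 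For Part (3), Corollary \ref{sum_of_length} gives $\ell(\Ext^j_S(S/I^D, S)) = \sum_{d=n}^D \ell(\Ext^j_S(I^{d-1}/I^d, S))$, and the asymptotic $a_d \sim L \cdot d^{mn-1}/(mn-1)!$ from Part (2) combined with $\sum_{d=1}^D d^{mn-1} \sim D^{mn}/(mn)$ yields $\epsilon^j(I) = L = \frakj^j(I)$.

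The hardest step will be the integral evaluation: the fixed boundary $y_1 = 1$ prevents a direct application of the textbook Selberg formula, so I will need to reduce carefully to the $(n-1)$-variable Selberg integral with the added factor $\prod_{j \geq 2}(1 - y_j)^2 \cdot y_j^{m-n}$ and then manipulate the resulting Gamma-function products to recover precisely the hook-length form $(mn)! \prod_{i=0}^{n-1} i!/(m+i)!$. A secondary technical point is keeping careful track of the lower-order corrections in the Riemann sum, to ensure they do not contribute to the leading-order asymptotic in $d^{mn-1}$.
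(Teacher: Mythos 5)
Your proposal follows essentially the same route as the paper's own proof: graded local duality to reduce to $\Ext$-modules, the $\GL$-equivariant decomposition of $\Ext^{\bullet}_S(J_{\underline{z},l},S)$ from Theorem~\ref{schur-decompos} specialized to $(\underline{z},l)=((d-1)^n,n-1)$ with $s=0$, a Riemann-sum/Euler--Maclaurin asymptotic that yields a Selberg integral in $n-1$ variables, and finally the telescoping from Corollary~\ref{sum_of_length} to pass from $\frakj^j$ to $\epsilon^j$. Your reparametrization $\nu_i=-\lambda_{n+1-i}-m$ (with $\nu_1=d-n$ fixed) is the reflection of the paper's $\epsilon_i=\lambda_i-\lambda_n$, and after the substitution $x\mapsto 1-x$ your integrand $\prod(1-x_i)^2x_i^{m-n}\prod(x_i-x_j)^2$ coincides with the paper's $\prod(1-x_i)^{m-n}x_i^2\prod(x_i-x_j)^2$, so the Selberg evaluation goes through exactly as you anticipate.
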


\begin{remark}\label{Grassmannian}
	As mentioned in the introduction, this formula has a geometric interpretation. Recall that the degree of the Grassmannian $G(a,b)$ is
	\begin{align*}
		\operatorname{deg}(G(a,b)) = (a(b-a))! \prod^{a-1}_{i=0}\dfrac{i!}{(b-a+i)!},
	\end{align*}
	see \cite[Ch 4]{EisenbudHarris}. Replacing $a$ with $n$ and $b$ with $m+n$, we get 
	\begin{align*}
		\operatorname{deg}(G(n,m+n)) = (mn)! \prod^{n-1}_{i=0}\dfrac{i!}{(m+i)!},
	\end{align*}
	which is precisely $\epsilon^{n^2-1}(I_n)$ (and $\frakj^{n^2-1}(I_n)$), and so it must be an integer. 
\end{remark}

We will first prove the existence of $\frakj^j(I)$, then use it to prove the existence of $\epsilon^j(I)$. After that we will dicuss their formulae.

\begin{prop}\label{existence_limit}
	Let $$C=(mn-1)!\prod_{1\leq i \leq n}\dfrac{1}{(n-i)!(m-i)!}$$ and let $\delta=\{0\leq x_{n-1} \leq ... \leq x_1 \leq 1\}\subseteq \mathbb{R}^{n-1}$, $dx=dx_{n-1}...dx_1$. Then $\ell(H^{n^2-1}_\frakm(I^{d-1}/I^d)) < \infty$ and is nonzero for $d\geq n$. Moreover the limit $$\frakj^{n^2-1}(I) = \lim_{d\rightarrow \infty}\dfrac{(mn-1)!\ell(H^{n^2-1}_\frakm(I^{d-1}/I^d))}{d^{mn-1}}$$ exists and the formula is given by
	\begin{align}\label{integral_formula}
			 C\bigintss_\delta(\prod_{1\leq i \leq n-1}(1-x_i)^{m-n}x_i^2)(\prod_{1\leq i < j\leq n-1}(x_i-x_j)^2)dx.
\end{align}
\end{prop}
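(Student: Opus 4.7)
The proof proceeds in three stages: (i) convert the statement about local cohomology to one about $\Ext$ via local duality and use the short exact sequence machinery to reduce the length to that of a single filtration factor, (ii) unpack the $\GL$-equivariant Schur decomposition, and (iii) identify the resulting finite sum as a Riemann sum for the claimed integral.

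\textbf{Stage 1 (Duality and reduction to a single factor).} Since $S$ is a polynomial ring in $N = mn$ variables, graded local duality yields $\ell(H^{n^2-1}_\frakm(M)) = \ell(\Ext^{n(m-n)+1}_S(M, S))$ for any finitely generated graded $M$. Set $j := n(m-n)+1$. By Proposition \ref{length_ext_fin}, $\ell(\Ext^j_S(S/I^d, S))$ is finite and nonzero for $d \geq n$. The short exact sequence from the proof of Corollary \ref{sum_of_length},
\begin{equation*}
0 \to \Ext^j_S(S/I^{d-1}, S) \to \Ext^j_S(S/I^d, S) \to \Ext^j_S(I^{d-1}/I^d, S) \to 0,
\end{equation*}
shows $\ell(\Ext^j_S(I^{d-1}/I^d, S))$ is finite. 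Using the filtration decomposition (\ref{decompose_first}) together with Lemma \ref{weights_max_minors}, one checks that $\mathcal{Z}^d_n \setminus \mathcal{Z}^{d-1}_n = \{((d-1)^n, n-1)\}$, so the difference of the outer two terms collapses to $\ell(\Ext^j_S(J_{((d-1)^n, n-1)}, S))$, and this is nonzero for $d \geq n$ since the weight corresponding to $\mu = 0$ (in the variables introduced in Stage~2) lies in the relevant $W$-set.

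\textbf{Stage 2 (Explicit Schur expansion).} Applying Theorem \ref{schur-decompos} to $(\underline{z}, l) = ((d-1)^n, n-1)$ with the unique admissible $s = 0$, the weights $\lambda$ contributing satisfy $\lambda_n = n-d-m$ and $\lambda_1 \leq -m$. The substitution $\mu_i := \lambda_i + d + m - n$ gives a bijection with partitions $0 \leq \mu_{n-1} \leq \cdots \leq \mu_1 \leq d - n$ (setting $\mu_n := 0$). Using Corollary \ref{same_dim} to shift weights and Proposition \ref{dim_schur} to compute dimensions, we obtain $\dim(S_\lambda \mathbb{C}^n) = \dim(S_\mu \mathbb{C}^n)$ and $\dim(S_{\lambda(0)} \mathbb{C}^m) = \dim(S_\nu \mathbb{C}^m)$, where $\nu := ((d-n)^{m-n}, \mu_1, \ldots, \mu_{n-1}, 0)$. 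Lemma \ref{length=dim} then expresses
\begin{equation*}
\ell(\Ext^j_S(I^{d-1}/I^d, S)) = \sum_\mu \dim(S_\nu \mathbb{C}^m) \cdot \dim(S_\mu \mathbb{C}^n).
\end{equation*}

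\textbf{Stage 3 (Riemann sum).} A direct computation of the Weyl dimension formula, splitting the pairs $1 \leq i < j \leq m$ into three blocks according to whether each index is $\leq m-n$ or $> m-n$, shows that the summand $F(\mu, d) := \dim(S_\nu \mathbb{C}^m) \dim(S_\mu \mathbb{C}^n)$ is a polynomial in $(\mu_1, \ldots, \mu_{n-1}, d)$ of total degree $n(m-1)$, with top homogeneous part
\begin{equation*}
F_{\text{top}}(\mu, d) = \frac{d^{m-n} \prod_{k=1}^{n-1}(d - \mu_k)^{m-n} \left( \prod_{k=1}^{n-1} \mu_k \right)^2 \prod_{1 \leq i < j \leq n-1}(\mu_i - \mu_j)^2}{\prod_{k=0}^{n-1} k! \cdot \prod_{k=m-n}^{m-1} k!}.
\end{equation*}
Substituting $x_i = \mu_i / d$ converts $\sum_\mu F_{\text{top}}(\mu, d)$ over the lattice region $\{0 \leq \mu_{n-1} \leq \cdots \leq \mu_1 \leq d-n\}$ into a Riemann sum with step $1/d$ for $\int_\delta F_{\text{top}}(xd, d)\, dx$; by homogeneity this equals $d^{nm-1} \int_\delta F_{\text{top}}(x, 1)\, dx$ up to lower order. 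Multiplying by $(mn-1)!$ and matching constants recovers the formula with coefficient $C$.

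\textbf{Main obstacle.} The hard part is Stage 3: one must carefully perform the three-block split of the Weyl product, track cancellations between the constant factors $\prod k!$ in the denominators of the two dimension formulas, and identify how the Vandermonde-type factors in $\dim(S_\nu \mathbb{C}^m)$ and $\dim(S_\mu \mathbb{C}^n)$ combine into the squared products $(\prod \mu_k)^2$ and $\prod(\mu_i - \mu_j)^2$ appearing in $F_{\text{top}}$. One also needs the standard (but necessary) estimate that the omitted lower-degree terms in $(\mu, d)$ and the usual Riemann-sum error contribute only $O(d^{mn-2})$, so they vanish after normalization by $d^{mn-1}$.
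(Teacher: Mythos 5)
Your proof is correct and follows essentially the same route as the paper: local duality (Lemma \ref{isom_of_length}), isolation of the single filtration factor $J_{((d-1)^n,n-1)}$ via Lemma \ref{injectivity} together with $\mathcal{Z}^d_n\setminus\mathcal{Z}^{d-1}_n=\{((d-1)^n,n-1)\}$, the substitution $\mu_i=\lambda_i-\lambda_n$ (the paper's $\epsilon_i$), and approximation of the resulting lattice sum of Weyl-dimension products by the integral over $\delta$. The only cosmetic difference is that you frame the last step as a Riemann sum where the paper iterates the Euler--Maclaurin/Faulhaber formula one variable at a time, and both leave the block-by-block bookkeeping of the Weyl product at roughly the same level of detail; your stated $F_{\mathrm{top}}$ and the constant denominator $\prod_{k=0}^{n-1}k!\cdot\prod_{k=m-n}^{m-1}k!$ agree with the paper's $\prod_{i=1}^{n}(n-i)!(m-i)!$ after reindexing.
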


Before we give the proof of the above Proposition, we need to state some well-known results. We will use the local duality to study $\ell(H^j_\frakm(I^{d-1}/I^d))$ and $\ell(H^j_\frakm(S/I^D))$. Let $M^\vee$ denote the graded Matilis dual of an $R$-module $M$ where $R$ is a polynomial ring over $\mathbb{C}$ such that $$(M^\vee)_\alpha := \Hom_\mathbb{C}(M_-\alpha,\mathbb{C}),$$
and recall that the Matlis duality preserves length of finite length module.

\begin{lemma}\label{isom_of_length}
	Let $M$ be a finite length module over $S$. Then we have $$\ell(\Ext^j_S(M,S)) = \ell(H^{\Dim(S)-j}_\frakm(M)).$$
\end{lemma}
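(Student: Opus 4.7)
The plan is to invoke graded local duality for the polynomial ring $S=\mathbb{C}[x_{ij}]$. Writing $k:=\Dim(S)$, the graded canonical module is $\omega_S\cong S(-k)$, and graded local duality (see for instance \cite[Theorem~3.6.19]{BrunsHerzog}) supplies a natural isomorphism of graded $S$-modules
$$H^{k-j}_{\frakm}(M)^{\vee}\;\cong\;\Ext^{j}_S(M,\omega_S),$$
where $(-)^{\vee}$ denotes the graded Matlis dual defined in the paragraph preceding the lemma.

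First I would observe that a degree shift does not change length, so
$$\ell(\Ext^{j}_S(M,\omega_S))\;=\;\ell(\Ext^{j}_S(M,S)).$$
Next, since $M$ has finite length it is $\frakm$-torsion and supported only at the origin, so both $H^{k-j}_{\frakm}(M)$ and $\Ext^{j}_S(M,\omega_S)$ are automatically of finite length; in particular $M$ has only finitely many nonzero graded components, each a finite-dimensional $\mathbb{C}$-vector space. The graded Matlis dual is defined pointwise by $\mathbb{C}$-duality on each graded piece, so it preserves the dimension of each graded component, and by Lemma \ref{length=dim} this means it preserves length. Putting the pieces together,
$$\ell(H^{k-j}_{\frakm}(M))\;=\;\ell(H^{k-j}_{\frakm}(M)^{\vee})\;=\;\ell(\Ext^{j}_S(M,\omega_S))\;=\;\ell(\Ext^{j}_S(M,S)),$$
which is exactly the asserted equality.

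There is no real obstacle here: the lemma is essentially a direct citation of a standard theorem. The only point meriting a moment's care is the reduction from $\omega_S$ to $S$, which is harmless because we only need equality of lengths, and degree twists preserve length. As a sanity check, one can verify the conclusion by hand: the right-hand side equals $\ell(M)$ when $j=k$ and vanishes otherwise (since $H^0_{\frakm}(M)=M$ and higher local cohomology of a $\frakm$-torsion module vanishes), which matches the behavior of $\Ext^{\bullet}_S(M,S)$ for a finite length module over the regular ring $S$.
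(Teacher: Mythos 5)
Your argument is correct and is essentially the same as the paper's: both invoke graded local duality \cite[Theorem 3.6.19]{BrunsHerzog} to identify $H^{\Dim(S)-j}_\frakm(M)^\vee$ with $\Ext^j_S(M,\omega_S)$, and then observe that the degree shift $\omega_S\cong S(-\Dim S)$ and the graded Matlis dual both preserve length. The paper states this more tersely, but the content is identical; your closing sanity check (both sides are $\ell(M)$ when $j=\Dim S$ and $0$ otherwise for finite-length $M$) is a nice extra observation but does not change the approach.
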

\begin{proof}
	By the local duality (see \cite[theorem 3.6.19]{BrunsHerzog}), we have $$\Ext^j_S(M,S(-\Dim(S))) \cong H^{\Dim(S)-j}_\frakm(M)^\vee.$$ Then the assertion of our lemma is immediate.
\end{proof}
Using this lemma we turn the problem into studying the length of $\Ext$-modules of cohomological degree $n(m-n)+1$. 
In the proof of Theorem \ref{existence_limit}, We will employ part of the strategy used in \cite{Kenkel}. However we will not resort to binomial coefficients since they will be too complicated to study in higher dimensional rings. We will instead use the following elementary but powerful facts.

\begin{theorem}(Euler-Maclaurin formula, see \cite{Apostol})\label{EM}
	Suppose $f$ is a function with continuous derivative on the interval $[1,b]$, then 
	$$\sum^b_{i=a}f(i) = \int^b_a f(x)dx+\dfrac{f(b)+f(a)}{2}+\sum^{\lfloor p/2 \rfloor}_{k=1}\dfrac{B_{2k}}{(2k)!}(f^{(2k-1)}(b)-f^{(2k-1)}(a))+R_p$$ where $B_{2k}$ is the Bernoulli number and $R_p$ is the remaining term.
\end{theorem}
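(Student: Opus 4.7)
The plan is to reduce the computation of $\ell(H^{n^2-1}_\frakm(I^{d-1}/I^d))$ to a combinatorial sum of dimensions of Schur modules, and then to recognize that sum as a Riemann sum converging to the stated integral after rescaling.

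First I would use Lemma \ref{isom_of_length} to pass from local cohomology to Ext, noting that $mn - (n^2-1) = n(m-n)+1$. Finiteness and nonvanishing of the length for $d \geq n$ then follow immediately from Proposition \ref{length_ext_fin} and Lemma \ref{vanishing_degree} (the latter telling us $s=0$ is forced when the length is finite). Next, I would exploit the telescoping observation: combining Corollary \ref{sum_of_length} with Lemma \ref{weights_max_minors} (which says $\mathcal{Z}^d_n$ consists precisely of the pairs $((c)^n,n-1)$ for $0 \leq c \leq d-1$), one sees that passing from $d-1$ to $d$ in the filtration of $S/I^d$ adds exactly one new factor. Hence
\[
\ell(\Ext^{n(m-n)+1}_S(I^{d-1}/I^d,S)) = \ell(\Ext^{n(m-n)+1}_S(J_{((d-1)^n,\,n-1)},S)),
\]
so the problem is localized to a single filtration factor.

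The second stage is to apply Theorem \ref{schur-decompos} with $l = n-1$, $s=0$, $t_1 = n-1$, and $\underline{z} = ((d-1)^n)$. The weight constraints in (\ref{res_weight_gen}) collapse to $\lambda_n = n - d - m$ (fixed) and $\lambda_1 \leq -m$, so that $\lambda$ ranges over a finite lattice polytope. I would reparametrize via $a_i := \lambda_i + d + m - n$ and use Corollary \ref{same_dim} to shift each Schur weight by a constant, producing a weight $(a_1,\ldots,a_{n-1},0)$ with $d-n \geq a_1 \geq \cdots \geq a_{n-1} \geq 0$, and a companion weight $((d-n)^{m-n}, a_1,\ldots,a_{n-1},0)$ on $\mathbb{C}^m$. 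Then Proposition \ref{dim_schur} writes the length as
\[
\ell = \sum_{d-n \geq a_1 \geq \cdots \geq a_{n-1} \geq 0} \dim S_{((d-n)^{m-n},a_1,\ldots,a_{n-1},0)}\mathbb{C}^m \cdot \dim S_{(a_1,\ldots,a_{n-1},0)}\mathbb{C}^n,
\]
a sum of rational functions that are manifestly polynomials in $a_1,\ldots,a_{n-1}$ (and in $d$, via the $d-n$ entries).

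The third stage is the asymptotic analysis. After substituting $a_i = d \cdot x_i$, factoring the Weyl products into contributions from pairs within the two blocks of the weight (constant block vs.\ variable block) and pairs across them, the leading homogeneous part of the integrand is
\[
\Bigl(\prod_{i=1}^{n-1}(1-x_i)^{m-n} x_i^{\,2}\Bigr)\Bigl(\prod_{1\leq i<j\leq n-1}(x_i-x_j)^2\Bigr),
\]
multiplied by a product of factorials coming from the denominators $\prod (j-i)$ in Weyl's formula; after a direct accounting this constant reorganizes into $C/(mn-1)!$. I would then use the Euler--Maclaurin formula (Theorem \ref{EM}), applied iteratively in the variables $a_{n-1}, a_{n-2}, \ldots, a_1$, to convert the nested sum over the simplex into $d^{mn-1}$ times the stated integral over $\delta$, plus a remainder of order $O(d^{mn-2})$. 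Dividing by $d^{mn-1}$ and passing to the limit yields the formula, and establishes both existence of $\mathfrak{J}^{n^2-1}(I)$ and its closed form.

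The hardest part, as the author hints in the remark preceding the Euler--Maclaurin citation, is the multi-variable asymptotic bookkeeping: one must show that each iterated application of Euler--Maclaurin introduces only error of order strictly lower than $d^{mn-1}$, which requires identifying the exact total degree of the polynomial integrand in all variables jointly (namely $n(m-1) = mn-n$) and matching it against the $(n-1)$-dimensional simplex of summation. Once this bookkeeping is complete, the integral form drops out and the constant $C$ can be read off from $\prod_{1\leq i<j\leq m}(j-i)$ and $\prod_{1\leq i<j\leq n}(j-i)$, both of which simplify to superfactorial-type expressions matching $(mn-1)!\prod_{i=1}^n \frac{1}{(n-i)!(m-i)!}$.
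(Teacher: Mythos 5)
You have proved the wrong statement. The theorem you were asked to address is the Euler--Maclaurin summation formula itself (Theorem \ref{EM}), which is a classical result in real analysis relating a finite sum $\sum_{i=a}^b f(i)$ to the integral $\int_a^b f(x)\,dx$ via Bernoulli numbers and derivatives of $f$. The paper does not prove this theorem; it cites it from Apostol and uses it as a black box. Your proposal, by contrast, is a sketch of Proposition \ref{existence_limit} --- the computation of $\frakj^{n^2-1}(I)$ via the Schur-module decomposition --- which is an entirely different (and much later) result in the paper. Indeed, your third stage explicitly \emph{invokes} Theorem \ref{EM} as a tool, so as an argument for Theorem \ref{EM} it would be circular.

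A correct proof of the Euler--Maclaurin formula would proceed along classical lines: one starts from the identity $\int_k^{k+1} f(x)\,dx = \frac{f(k)+f(k+1)}{2} - \int_k^{k+1}\bigl(x-k-\tfrac12\bigr)f'(x)\,dx$, obtained by integrating by parts against the first periodic Bernoulli polynomial $B_1(\{x\}) = \{x\}-\tfrac12$, then iterates integration by parts using the recursion $B_m'(x) = m\,B_{m-1}(x)$ and the facts that $B_{2k+1}(0)=B_{2k+1}(1)=0$ for $k\geq 1$ while $B_{2k}(0)=B_{2k}(1)=B_{2k}$. Summing the resulting telescoping identities over $k=a,\dots,b-1$ yields exactly the displayed formula, with $R_p$ the integral of the highest periodic Bernoulli polynomial against $f^{(p)}$. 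None of this machinery appears in your proposal, so there is no overlap to compare; you should consult Apostol's article or a standard analysis text for the argument.
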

For our application we only need to use the integral part on the RHS of the above formula. A well-known consequence is the following.
\begin{corollary}(Faulhaber's formula)\label{Faulhaber}
	The closed formula of the sum of $p$-th power of the first $b$ integers can be written as 
	\begin{align*}
		\sum^b_{k=1}k^p = \dfrac{b^{p+1}}{p+1}+\dfrac{1}{2}b^p+\sum^p_{k=2}\dfrac{B_k}{k!}\dfrac{p!}{p-k+1!}b^{p-k+1}.
	\end{align*}
Again the $B_k$ is the Bernoulli number. In particular, the sum on the LHS can be expressed as a polynomial of degree $p+1$ in $b$ with leading coefficient $\dfrac{1}{p+1}$.
\end{corollary}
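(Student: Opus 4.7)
The plan is to derive Faulhaber's formula as a direct substitution into the Euler--Maclaurin formula (Theorem \ref{EM}) with the choice $f(x)=x^p$ and $a=1$. Since $f$ is a polynomial of degree $p$, all derivatives $f^{(k)}$ with $k>p$ vanish identically, so choosing the truncation parameter in Theorem \ref{EM} to be any integer $\geq p$ forces the remainder $R_p$ to be zero. This reduces Euler--Maclaurin to a genuine finite identity and sidesteps any analytic estimates.

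The concrete computations to carry out are the following. First, $\int_1^b x^p\,dx = \frac{b^{p+1}}{p+1} - \frac{1}{p+1}$, which produces the claimed leading term. Next, $\frac{f(b)+f(1)}{2}=\frac{b^p+1}{2}$ produces the $\tfrac12 b^p$ term. Finally, the derivative $f^{(2k-1)}(x)=\frac{p!}{(p-2k+1)!}x^{p-2k+1}$, so
\begin{align*}
\sum_{k=1}^{\lfloor p/2\rfloor}\frac{B_{2k}}{(2k)!}\bigl(f^{(2k-1)}(b)-f^{(2k-1)}(1)\bigr)
=\sum_{k=1}^{\lfloor p/2\rfloor}\frac{B_{2k}}{(2k)!}\frac{p!}{(p-2k+1)!}\bigl(b^{p-2k+1}-1\bigr).
\end{align*}
Re-indexing this sum to run over all $k$ from $2$ to $p$ is harmless: the odd Bernoulli numbers $B_k$ vanish for $k\geq 3$, so the new terms contribute nothing, and this puts the identity in the form displayed in the statement (the constant contributions from $-\tfrac{1}{p+1}$ and the $-1$'s may be absorbed into the usual constant term of the polynomial).

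Adding the three pieces gives the claimed closed form. Since the highest power of $b$ that appears on the right-hand side is $b^{p+1}$, with coefficient $\tfrac{1}{p+1}$, and every remaining term is of strictly smaller degree in $b$, the final assertion -- that $\sum_{k=1}^{b}k^{p}$ is a polynomial in $b$ of degree $p+1$ with leading coefficient $\tfrac{1}{p+1}$ -- follows immediately. There is no substantive obstacle here; the only point requiring a sentence of care is the observation that $R_p = 0$ for polynomial $f$, which is precisely what makes Euler--Maclaurin yield an exact formula rather than an asymptotic expansion in this instance.
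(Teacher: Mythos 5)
Your approach — specializing Euler--Maclaurin (Theorem \ref{EM}) to $f(x)=x^p$, $a=1$, and observing that $R_p=0$ because $f$ is a polynomial of degree $p$ — is exactly the intended derivation; the paper states Corollary \ref{Faulhaber} as a ``well-known consequence'' without proof, and the proof you give is the standard one. The ``in particular'' assertion, which is the only part used later (to control the degrees of the polynomials arising from the nested sums in (\ref{dim_formula_sep}) and (\ref{sum_dim_schur_pfaffian})), is fully and cleanly established by your computation of the integral and midpoint terms.

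The one gap is your treatment of the constant term. You write that the constants $-\tfrac{1}{p+1}$, $+\tfrac{1}{2}$, and the $-1$'s from $f^{(2k-1)}(1)$ ``may be absorbed into the usual constant term of the polynomial,'' but the displayed identity has \emph{no} constant term: the powers of $b$ on the right-hand side run from $b^{p+1}$ down to $b^{1}$. So to obtain the identity exactly as stated you must check that the accumulated constant vanishes, i.e.\ that
\[
-\frac{1}{p+1}+\frac{1}{2}-\sum_{k=1}^{\lfloor p/2\rfloor}\frac{B_{2k}}{(2k)!}\frac{p!}{(p-2k+1)!}=0,
\]
which, after rewriting $\frac{B_k}{k!}\frac{p!}{(p-k+1)!}=\frac{1}{p+1}\binom{p+1}{k}B_k$, is precisely the recursion $\sum_{k=0}^{p}\binom{p+1}{k}B_k=0$ defining the Bernoulli numbers (with $B_1=-\tfrac12$). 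Equivalently, you could note that both sides are polynomials in $b$ agreeing in all coefficients of degree $\geq 1$, hence differ by a constant, and that evaluating at $b=1$ gives $1$ on each side. Either remark closes the gap; as written, your proof establishes the formula only up to an unverified additive constant, even though the conclusion actually invoked in the paper is unaffected.
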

\begin{proof}[Proof of Proposition \ref{existence_limit}]
	Let $s$ be as in Theorem \ref{schur-decompos}. By Lemma \ref{vanishing_degree}, we have $\Ext^j_S(I^{d-1}/I^d,S)\neq 0$ for $s=0$, so $\frakj^j(I) \neq 0$. The first claim follows from Proposition \ref{length_ext_fin} and Lemma \ref{isom_of_length}. We will prove the second claim. We first consider the length of $\Ext^j_S(I^{d-1}/I^d,S)$. By Lemma \ref{injectivity}, in order to calculate $\ell(\Ext^j_S(I^{d-1}/I^d,S)$ we only need to calculate the dimension of the tensor products of Schur modules that is in $\Ext^j_S(S/I^d)$ but not in $\Ext^j_S(S/I^{d-1},S)$. By Lemma \ref{weights_max_minors}, we need to consider the $\underline{z} \in \mathcal{P}_n$ such that $\{z_1=...=z_n=d-1\}$. This means we are considering the weights 
\begin{align*}
	\begin{cases}
		\lambda_n=n-d-m,\\
		\lambda_1\leq -m.
	\end{cases}
\end{align*}
i.e. $$\Ext^j_S(I^{d-1}/I^d,S) = \bigoplus S_{\lambda(0)}\mathbb{C}^m \otimes S_\lambda \mathbb{C}^n$$ where $\lambda$ satisfies the above conditions.
Adopting the notations of \cite{Kenkel}, we can write 
\begin{align*}
	\lambda &= (\lambda_1,\lambda_2,...\lambda_n)\\
		&= (\lambda_n+\epsilon_1,\lambda_n+\epsilon_2,...,\lambda_n)\\
		&= (n-d-m+\epsilon_1, n-d-m+\epsilon_2,...,n-d-m).
\end{align*}
Since $\lambda_1\leq -m$, it follows that $n-d-m \leq n-d-m+\epsilon_1 \leq m \implies 0\leq \epsilon_1 \leq d-n$. Since $\lambda$ is dominant, we have $0\leq \epsilon_{n-1}\leq ... \leq \epsilon_1 \leq d-n$. By Corollary \ref{same_dim}, we have $$\Dim(S_\lambda\mathbb{C}^n) = \Dim(S_{(\epsilon_1,...,\epsilon_{n-1},0)}\mathbb{C}^n$$ by adding $((n-d-m)^n)$ to $\lambda$. Therefore the dimension of $S_\lambda \mathbb{C}^n$ is given by 
\begin{align}\label{dim_small}
	\Dim(S_\lambda \mathbb{C}^n) = \Dim(S_{(\epsilon_1,...,\epsilon_{n-1},0)}\mathbb{C}^n) = (\prod_{1\leq i < j\leq n-1}\dfrac{\epsilon_i - \epsilon_j + j-i}{j-i})(\prod_{1\leq i \leq n-1}\dfrac{\epsilon_i+n-i}{n-i}).
\end{align}

Now we look at $S_{\lambda(0)}\mathbb{C}^m$. By definition $\lambda(0)=((-m)^{m-n},\lambda_1,...,\lambda_n)$. Use Corollary \ref{same_dim} again by adding $((n-d-m)^m)$ to $\lambda(0)$ we get that
\begin{align}\label{dim_large}
	\begin{split}
		\Dim(S_{\lambda(0)}\mathbb{C}^m) &= \Dim(S_{((d-n)^{(m-n)},\epsilon_1,...,\epsilon_{n-1},0)}\mathbb{C}^m\\
						 &= (\prod_{1\leq i\leq n-1}\dfrac{j-i}{j-i})(\prod_{1\leq i \leq m-n}\dfrac{d-\epsilon_1+m-2n+1-i}{m-n+1-i})\\
						 & (\prod_{1\leq i\leq m-n}\dfrac{d-\epsilon_2+m-2n+2-i}{m-n+2-i})(\epsilon_1-\epsilon_2+1)\\
						 &...\\
						 & (\prod_{1\leq i\leq m-n}\dfrac{d-n+m-i}{m-i})(\epsilon_{n-1}+1)(\dfrac{\epsilon_{n-2}+2}{2})...(\dfrac{\epsilon_1+n-1}{n-1}).
	\end{split}
\end{align}
Multiplying (\ref{dim_small}) and (\ref{dim_large}) we get that
\begin{align}\label{dim_multiply}
	\begin{split}
		\Dim(S_{\lambda(0)}\mathbb{C}^m \otimes S_\lambda\mathbb{C}^n)
		&= \Dim(S_{\lambda(0)}\mathbb{C}^m) \times \Dim(S_\lambda\mathbb{C}^n) \\
		&= \big(\prod_{1\leq i \leq m-n}\dfrac{d-n+m-i}{m-i}\big)\\
		&\Big(\big(\prod_{1\leq i \leq m-n}\dfrac{d-\epsilon_1-2n+m+1-i}{m-n+1-i}\big)\big(\dfrac{\epsilon_1+n-1}{n-1}\big)^2\\
		&\big(\prod_{1\leq i \leq m-n}\dfrac{d-\epsilon_2-2n+m+2-i}{m-n+2-i}\big)\big(\epsilon_1-\epsilon_2+1)^2\big(\dfrac{\epsilon_2+n-2}{n-2}\big)^2\\
		&...\\
		&\big(\prod_{1\leq i\leq m-n}\dfrac{d-n-\epsilon_{n-1}+m-1-i}{m-1-i}\big)(\epsilon_{n-2}-\epsilon_{n-1}+1)^2...(\epsilon_{n-1}+1)^2\Big)
	\end{split}
\end{align}
The formula (\ref{dim_multiply}) is for a particular choice of $\epsilon_1,...,\epsilon_{n-1}$. To calculate $\ell(\Ext^j_S(I^{d-1}/I^d,S))$ we need to add the result of all possible choices of $\epsilon_1,...,\epsilon_{n-1}$. After some manipulations we will end up with 
\begin{align}\label{dim_formula_sep}
	\ell(\Ext^j_S(I^{d-1}/I^d,S) &= \sum_{0\leq \epsilon_{n-1}\leq ... \leq \epsilon_1\leq d-n}(\ref{dim_multiply})\\
	\tag{\ref{dim_formula_sep} - 0} &= (\prod_{1\leq i\leq m-n}\dfrac{d-n+m-i}{m-i}) \\ 
	\tag{\ref{dim_formula_sep} - 1}	     &\Big(\sum^{d-n}_{\epsilon_1=0}\big(\prod_{1\leq i \leq m-n}\dfrac{d-\epsilon_1-2n+m+1-i}{m-n+1-i}\big)\big(\dfrac{\epsilon_1+n-1}{n-1}\big)^2\\
	\tag{\ref{dim_formula_sep} - 2}			     &(\sum^{\epsilon_1}_{\epsilon_2=0}\big(\prod_{1\leq i\leq m-n}\dfrac{d-\epsilon_2-2n+m+2-i}{m-n+2-i}\big)(\epsilon_1-\epsilon_2+1)^2\big(\dfrac{\epsilon_2+n-2}{n-2}\big)^2\\
	\notag					     &...\\
\tag{\ref{dim_formula_sep} - (n-2)}	     &(\sum^{\epsilon_{n-3}}_{\epsilon_{n-2}=0}\big(\prod_{1\leq i\leq m-n}\dfrac{d-n-\epsilon_{n-2}+m-2-i}{m-2-i}\big)(\epsilon_{n-3}-\epsilon_{n-2}+1)^2...(\dfrac{\epsilon_{n-2}+2}{2})^2\big)\\
	\tag{\ref{dim_formula_sep} - (n-1)}	     &(\sum^{\epsilon_{n-2}}_{\epsilon_{n-1}=0}\big(\prod_{1\leq i\leq m-n}\dfrac{d-n-\epsilon_{n-1}+m-1-i}{m-1-i}\big)(\epsilon_{n-2}-\epsilon_{n-1}+1)^2...(\epsilon_{n-1}+1)^2\big)...\Big)
\end{align}
Now Corollary \ref{Faulhaber} shows that the above sum will be a polynomial in $d$, and we need to calculate its degree. Corollary \ref{Faulhaber} also implies that when looking at each sum of (\ref{dim_formula_sep}) we only need to look at the summands that will contribute to the highest degree of the resulting polynomial. We see that the sum (\ref{dim_formula_sep} - (n-1)) can be expressed as a degree $m-n+2(n-1)+1 = m+n-1$ polynomial in $\epsilon_{n-2}$. Similarly (\ref{dim_formula_sep} - (n-2)) can be expressed as a degree $2m+2n-4$ polynomial in $\epsilon_{n-3}$. Continuing in this fashion we see that the sum (\ref{dim_formula_sep} - 1) can be expressed as a degree $mn-m+n-1$ polynomial in $d$. Multiplying (\ref{dim_formula_sep} - 0) with (\ref{dim_formula_sep} - 1) will result in a degree $mn-1$ polynomial.

Moreover, after factoring out the coefficients of the terms that will eventually contribute to the highest degree of the resulting polynomial of (\ref{dim_formula_sep}) and then apply Theorem \ref{EM} to the sum of said terms, the leading coefficient of the resulting polynomial of (\ref{dim_formula_sep}) is given by  
\begin{align*}
		\begin{gathered}
			\prod_{1\leq i\leq n}\dfrac{1}{(n-i)!(m-i)!} \lim_{d\rightarrow \infty}\dfrac{\bigintss_\Delta(\prod_{1\leq i \leq n-1}(d-x_i)^{m-n})(\prod_{1\leq i \leq n-1}x_i^2)(\prod_{1\leq i < j\leq n-1}(x_i-x_j)^2)dA}{d^{mn-m+n-1}},\\
			\Delta = \{0 \leq x_{n-1} \leq ... \leq x_1 \leq d-n \}.
\end{gathered}
\end{align*}
where the factor $\frac{1}{(m-1)!(n-1)!}$ comes from $(\ref{dim_formula_sep} - 0)$ and the coefficients of $(\dfrac{\epsilon_i}{n-i})^2$, and the product $\prod_{2\leq i\leq n}\frac{1}{(n-i)!(m-i)!}$ comes from the coefficients of the needed terms from the rest of $(\ref{dim_formula_sep})$. Since the above limit exists and the integrand is a homogeneous polynomial in $d,x_1,...,x_{n-1}$, we can simplify it to
\begin{align}\label{formula_mid}
		\begin{gathered}
			\prod_{1\leq i\leq n}\dfrac{1}{(n-i)!(m-i)!} \bigintss_\delta(\prod_{1\leq i \leq n-1}(1-x_i)^{m-n}x_i^2)(\prod_{1\leq i < j\leq n-1}(x_i-x_j)^2)dA,\\
			\delta = \{0 \leq x_{n-1} \leq ... \leq x_1 \leq 1 \}.
\end{gathered}
\end{align}
Multiplying the result with $(mn-1)!$ completes the proof.
\end{proof}

We will give some examples of the above formula.

\begin{example}
	Let $n=2$ and $j=3$. By Lemma \ref{vanishing_degree} and Lemma \ref{length_ext_fin}, $H^3_\frakm(I^{d-1}/I^d)\neq 0$ and has finite length. The integral we need to calculate is simply $$\int^{1}_{0}(1-x_1)^{m-2}x_1^2dx_1 = \dfrac{2}{m^3-m}.$$ Since $C=\frac{(2m-1)!}{(m-1)!(m-2)!}$, we get that $$\frakj^3(I) = \dfrac{1}{(m+1)!m!}(2m)! = \dfrac{1}{m+1}\binom{2m}{m}.$$ This recovered the result from \cite[Corollary 1.2]{Kenkel}.
\end{example}

\begin{example}
	Let $n=3$ and $j=8$. Again one can check with Lemma \ref{vanishing_degree} and \ref{length_ext_fin} that $H^8_\frakm(I^{d-1}/I^d)$ and $H^8_\frakm(S/I^D)$ are nonzero and has finite length for $D > d \geq n$. By Proposition \ref{existence_limit} we first calculate the integral $$\int_{0\leq x_2 \leq x_1 \leq 1}(1-x_1)^{m-3}(1-x_2)^{m-3}x_1^2x_2^2(x_1-x_2)^2 dx_2dx_1.$$ This can be done by doing integration by parts multiple times or simply use Sage. The result is $\frac{12}{m^2(m^2-4)(m^2-1)^2}$. 

	We also have $C=(3m-1)!\frac{1}{(m-3)!}\frac{1}{(m-2)!}\frac{1}{2(m-1)!}$. Therefore 
\begin{align*}
	\frakj^8(I) &= (3m-1)!\dfrac{12}{m^2(m^2-4)(m^2-1)^2(m-3)!(m-2)!2(m-1)!}\\
		      &= (3m)!\dfrac{2}{(m+2)!(m+1)!m!}.
\end{align*}

More specifically, consider the case when $m=4$. Then by Lemma \ref{vanishing_degree} and Proposition \ref{length_ext_fin}, $m-n=1$ and $n(m-n)+1 = 4$. We get
\begin{enumerate}
	\item The non-vanishing cohomological degrees of $\Ext^j_S(I^{d-1}/I^d,S)$ are $j=2,3,4$.
	\item Only $\Ext^4_S(I^{d-1}/I^d,S)$ is nonzero and has finite length.
	\item $\frakj^8(I) = (12)!2/(4!5!6!) = 462$.
\end{enumerate}

When $m=5$, $m-n = 2$ and $n(m-n)+1 = 7$.
\begin{enumerate}
	\item The non-vanishing cohomological degrees of $\Ext^j_S(I^{d-1}/I^d,S)$ are $j=3, 5, 7$.
	\item Only $\Ext^7_S(I^{d-1}/I^d,S)$ is nonzero and has finite length.
	\item $\frakj^8(I) = (15)!2/(5!6!7!) = 6006$.
\end{enumerate}
\end{example}

The examples above hinted that $\frakj^{n^2-1}(I)$ should be $(mn)!\prod^{n-1}_{i=0}\dfrac{i!}{(m+i)!}$ as stated in the main theorem, and we will prove that this is indeed the case. We first recall a classical result of Atle Selberg. For English reference one might check \cite[(1.1)]{ForresterWarnaar}.
\begin{theorem}(See \cite{Selberg})\label{Selberg_int_thm} 
	For $a, b$ and $c$ in $\mathbb{C}$ such that $\operatorname{Re}(a) > 0$, $\operatorname{Re}(b) > 0$ and $\operatorname{Re}(c) > -\operatorname{min}\{1/n, \operatorname{Re}(a)/(n-1), \operatorname{Re}(b)/(n-1)\}$ we have 
	\begin{align*}\label{Selberg_int}
		S_n(a,b,c) &= \bigintss_{[0,1]^n}\prod^n_{i=1}x_i^{a-1}(1-x_i)^{b-1}\prod_{1\leq i < j \leq n}|x_i-x_j|^{2c}dA \\
			   &= \prod^{n-1}_{i=0} \dfrac{\Gamma(a+ic)\Gamma(b+ic)\Gamma(1+(i+1)c)}{\Gamma(a+b+(n+i-1)c)\Gamma(1+c)}.
	\end{align*}
	where $\Gamma$ is the usual Gamma function $\Gamma(k)=(k-1)!$.
\end{theorem}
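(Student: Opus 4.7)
The plan is to prove Selberg's formula by Aomoto's method: derive a first-order difference equation in $a$ via integration by parts, then verify that the Gamma-product on the right-hand side satisfies the same equation and matches initial data, closing the induction with a Carlson-type uniqueness argument.

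First I would verify convergence of $S_n(a,b,c)$ on the stated parameter region. The exponents $\operatorname{Re}(a)-1$ and $\operatorname{Re}(b)-1$ control integrability near the faces $x_i=0$ and $x_i=1$, while the exponent $2\operatorname{Re}(c)$ governs the diagonals $\{x_i=x_j\}$; the hypotheses $\operatorname{Re}(c) > -\min\{1/n,\operatorname{Re}(a)/(n-1),\operatorname{Re}(b)/(n-1)\}$ are exactly what is needed so that each singular locus is integrable after passing to Fubini. For the base case $n=1$ the integral is the classical Beta integral and equals $\Gamma(a)\Gamma(b)/\Gamma(a+b)$, matching the product on the right.

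For the inductive/recursive step, the core idea is to differentiate the weight
\[
W(x) \;=\; \prod_{i=1}^n x_i^{a-1}(1-x_i)^{b-1} \prod_{1\le i<j\le n}|x_i-x_j|^{2c}
\]
against a cleverly chosen polynomial. Applying $\int_{[0,1]^n}\partial_{x_i}\!\bigl(x_i(1-x_i)\,W(x)\bigr)\,dA = 0$ (boundary terms vanish under the stated parameter restrictions) and summing over $i=1,\dots,n$ yields, after exploiting the $\mathfrak{S}_n$-symmetry of the integrand and the telescoping identity for the Vandermonde factor, a closed linear relation that collapses to the shift formula
\[
\frac{S_n(a+1,b,c)}{S_n(a,b,c)} \;=\; \prod_{i=0}^{n-1} \frac{a+ic}{a+b+(n+i-1)c}.
\]
A direct calculation with the functional equation $\Gamma(z+1)=z\Gamma(z)$ shows that the proposed right-hand side of Theorem~\ref{Selberg_int_thm} satisfies exactly the same difference equation in $a$. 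Hence the ratio $R(a) := S_n(a,b,c)/(\text{RHS})$ is periodic of period $1$.

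To conclude that $R \equiv 1$, I would show $R$ is bounded on a vertical strip and tends to $1$ as $\operatorname{Re}(a)\to\infty$: the asymptotics of $S_n(a,b,c)$ for large $a$ can be extracted by Laplace's method around the interior critical point of $\sum_i \log x_i$, while Stirling gives the corresponding asymptotics of the Gamma product. Carlson's theorem then forces the bounded periodic function $R$ to be constant equal to $1$, which completes the proof on the real-parameter domain; analyticity extends the identity to the full parameter region.

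The main obstacle will be the integration-by-parts computation that produces the shift relation. The delicate points are (i) checking that boundary contributions on $\partial([0,1]^n)$ and across the diagonals $\{x_i=x_j\}$ genuinely vanish under the stated convergence restrictions (the diagonal terms require treating $|x_i-x_j|^{2c}$ as a distributional derivative and using the symmetry of the integrand), and (ii) algebraically reducing the resulting symmetric sum, which a priori involves integrals of $x_i\,W$ and $x_i^2 W$, into a single scalar multiple of $S_n(a,b,c)$. The symmetrization is where the specific structure of the Vandermonde weight $\prod(x_i-x_j)^{2c}$ is crucial, and the combinatorial bookkeeping is the real content of the argument.
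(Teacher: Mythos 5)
The paper does not prove Theorem~\ref{Selberg_int_thm}: it cites Selberg's original paper \cite{Selberg} and the Forrester--Warnaar survey \cite{ForresterWarnaar} and then applies the formula as a black box. So there is no internal argument to compare against; what remains is to assess your sketch on its own terms.

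Your overall strategy is Aomoto's 1987 proof, which is a standard and correct route, and the difference equation you write down for $S_n(a+1,b,c)/S_n(a,b,c)$ is indeed the one the Gamma product satisfies. The convergence remarks and the base case $n=1$ are fine, and the closing Carlson-plus-analytic-continuation step is in the right spirit. However, there is a genuine gap in the central computation. The single symmetrized identity
\[
\sum_{i=1}^n \int_{[0,1]^n} \partial_{x_i}\bigl(x_i(1-x_i)W(x)\bigr)\,dA = 0
\]
does \emph{not} collapse to the $n$-fold product shift relation. Carrying it out, the Vandermonde pairing gives $\sum_{i\neq j}\frac{x_i(1-x_i)}{x_i-x_j}=\binom{n}{2}-(n-1)\sum_i x_i$, and after collecting terms one obtains only the first-moment identity
\[
\frac{\displaystyle\int x_1\,W\,dA}{\displaystyle\int W\,dA} \;=\; \frac{a+(n-1)c}{a+b+2(n-1)c},
\]
which is the $k=1$ step of Aomoto's chain, not the full ratio $\int\prod_{i=1}^n x_i\,W\,/\!\int W$. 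To get the shift you must introduce the intermediate Aomoto integrals $A_k=\int \prod_{j\le k}x_j\cdot W\,dA$ for $k=0,\dots,n$, prove the two-term recursion $A_k/A_{k-1}=\bigl(a+(n-k)c\bigr)/\bigl(a+b+(2n-k-1)c\bigr)$ by integrating by parts against $x_k(1-x_k)\prod_{j<k}x_j\cdot W$ (a computation that is no longer fully symmetric and requires separate bookkeeping for the indices $j<k$ and $j>k$), and only then telescope the chain to obtain $S_n(a+1,b,c)/S_n(a,b,c)=A_n/A_0$. Without this intermediate ladder the argument stalls after producing a single linear relation among moments, so the "combinatorial bookkeeping" you flag as the real content is not merely delicate but is in fact missing from the proposal as written.
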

Now we can prove Theorem \ref{formula_limit}.

\begin{proof}[Proof of Theorem \ref{formula_limit}]	
	(1) Follows from Lemma \ref{vanishing_degree}, Proposition \ref{length_ext_fin} and Lemma \ref{isom_of_length}.

	(2) By Proposition \ref{existence_limit} it remains to evaluate $$C\bigintss_\delta \prod^{n-1}_{i=1}(1-x_i)^{m-n}x_i^2\prod_{1\leq i < j \leq n-1}(x_i-x_j)^2 dA$$ where $$C = (mn-1)!\prod^n_{i=1}\dfrac{1}{(m-i)!(n-i)!},  \delta = \{0 \leq x_{n-1} \leq ... \leq x_1 \leq 1\}.$$

	By Theorem \ref{Selberg_int_thm} we have 
	\begin{align*}
		& C\bigintss_{[0,1]^{n-1}}\prod^{n-1}_{i=1}(1-x_i)^{m-n}x_i^2\prod_{1\leq i < j \leq n-1}(x_i-x_j)^2 dA
	     \\ &= C \prod^{n-2}_{i=0}\dfrac{\Gamma(3+i)\Gamma(m-n+1+i)\Gamma(2+i)}{\Gamma(m+i+2)\Gamma(2)} \\ &= C \prod^{n-2}_{i=0}\dfrac{(2+i)!(m-n+i)!(1+i)!}{(m+i+1)!}
	     \\ &= \dfrac{(mn)!}{mn}\dfrac{1}{(m-n)!}\prod^{n-1}_{i=1}\dfrac{1}{(m-i)!(n-i)!}\prod^{n-1}_{i=1}\dfrac{(1+i)!(m-n+i-1)!(i)!}{(m+i)!}
	     \\ &= \dfrac{(mn)!}{mn}\dfrac{1}{(m-n)!}\prod^{n-1}_{i=1}\dfrac{(1+i)!}{(m-n+i)!(m-n+i)...(m+i)}
	     \\ &= \dfrac{(mn)!}{mn}\dfrac{1}{(m-n)!}\dfrac{(m-n)!}{(m-1)!}\prod^{n-1}_{i=1}\dfrac{(1+i)!}{(m+i)!}
	     \\ &= \dfrac{(mn)!}{n}\prod^{n-1}_{i=0}\dfrac{1}{(m+i)!}\prod^{n-1}_{i=1}(1+i)!
	\end{align*}
	Since the integrand $\prod^{n-1}_{i=1}(1-x_i)^{m-n}x_i^2\prod_{1\leq i < j \leq n-1} (x_i-x_j)^2$ does not change under permutation of variables, we have 
	\begin{align*}
		& \bigintss_{[0,1]^{n-1}}\prod^{n-1}_{i=1}(1-x_i)^{m-n}x_i^2\prod_{1\leq i < j \leq n-1}(x_i-x_j)^2 dA 
		\\ = & (n-1)!\bigintss_\delta \prod^{n-1}_{i=1}(1-x_i)^{m-n}x_i^2\prod_{1\leq i < j \leq n-1}(x_i-x_j)^2 dA
	\end{align*}
	Hence we have 
	\begin{align*}
		 \frakj^{n^2-1}(I) & = C\bigintss_\delta \prod^{n-1}_{i=1}(1-x_i)^{m-n}x_i^2\prod_{1\leq i < j \leq n-1}(x_i-x_j)^2 dA \\
		& = \dfrac{1}{(n-1)!}\dfrac{(mn)!}{n}\prod^{n-1}_{i=0}\dfrac{1}{(m+i)!}\prod^{n-1}_{i=1}(1+i)! \\
		& = (mn)!\prod^{n-1}_{i=0}\dfrac{i!}{(m+i)!}.
	\end{align*}

	(3) Let $j=mn-n^2+1$. By Corollary \ref{sum_of_length} we need to sum $\ell(\Ext^j_S(I^{d-1}/I^d,S))$ over all $n \leq d\leq D$ to get $\ell(\Ext^j_S(S/I^D,S))$. It is clear that by Corollary \ref{Faulhaber} the sum 
\begin{equation}\label{sum_all_d}
\ell(\Ext^j_S(S/I^D,S))=\sum^D_{d=n}\ell(\Ext^j_S(I^{d-1}/I^d,S))
\end{equation}
can be expressed as a polynomial in $D$ of degree $mn$. By Lemma \ref{isom_of_length} we see that $\ell(H^{mn-j}_\frakm(S/I^D))$ is a polynomial in $D$ of degree $mn$ as well. Therefore we have $$\epsilon^{mn-j}(I) = \lim_{D\rightarrow \infty}\dfrac{(mn)!\ell(H^{mn-j}_\frakm(S/I^D))}{D^{mn}} < \infty,$$ where $mn-j = mn-n(m-n)-1 = n^2-1$.

Finally, apply Corollary 4.4 to (\ref{sum_all_d}), we see that the leading coefficient of the resulting polynomial of (\ref{sum_all_d}) is given by multiplying $1/mn$ to $(\ref{formula_mid})$, then multiplying the result with $(mn)!$ yields the desired formula, which is precisely $\epsilon^{n^2-1}(I) = \frakj^{n^2-1}(I)$. 
\end{proof}

%

\section{Decompositions of Ext modules of sub-maximal Pfaffians}
We will follow the same strategies to prove the existence of the $j$-multiplicities of the $\Ext$-module of the Pfaffians for a suitable $j$. Let $\Pf_{2k}$ be the $2k \times 2k$ Pfaffian of $S = \Sym(\bigwedge^2 \mathbb{C}^n)$ which can be considered as a polynomial ring with variables in a skew-symmetric matrix. In this section we recall the result of the decomposition of $\Ext^\bullet_S(S/\Pf_{2k}^d,S)$ from \cite{Perlman}.

We first recall some notations from \cite{Perlman}. Recall that $\mathcal{P}(k) = \{\underline{z} = (z_1 \geq ... \geq z_k \geq 0)\}$ and $\mathcal{P}_e(k)$ the partitions with columns of even lengths. We denote $$\underline{z}^{(2)} = (z_1,z_1,z_2,z_2,...,z_k,z_k) \in \mathcal{P}_e(2k).$$ 

It is well-known that $$S=\Sym(\bigwedge^2 \mathbb{C}^n) = \bigoplus_{\underline{z}\in \mathcal{P}(m)}S_{\underline{z}^{(2)}}\mathbb{C}^n,$$ see for example \cite[Proposition 2.3.8]{Weyman}. In \cite{AbeasisDelFra} the authors classified the $\GL$-invariant ideals in $S$. As in the case of generic matrix, we can consider the ideal $I_{\underline{x}}$ generated by $S_{\underline{x}^{(2)}}\mathbb{C}^n$. It can be shown that $I_{\underline{z}} = \bigoplus_{\underline{y} \geq \underline{z}} S_{\underline{y}^{(2)}}\mathbb{C}^n$. Again the $\GL$-invariant ideals in $S$ can be written as $$I_\mathcal{X} = \bigoplus_{\underline{x} \in \mathcal{X}} I_{\underline{x}}$$ for $\mathcal{X} \subset \mathcal{P}(m)$. Recall that we denote $$\mathcal{X}^d_p = \{\underline{x} \in \mathcal{P}(m) : |\underline{x}| = kd, x_1 \leq d\},$$ and it was shown in \cite{AbeasisDelFra} that $\Pf_{2p}^d = I_{\mathcal{X}^d_p}$. Now we are in position to state the main tool for pfaffians. Note that the result in \cite{Perlman} was stated in terms of the dual vector space $(\mathbb{C}^n)^*$, and we will follow this convention here.

\begin{theorem}(see \cite[Theorem F, Theorem 3.3]{Perlman})\label{perlman_main}
Let 
\begin{equation}\label{Tl(z)}
	\mathcal{T}_l(\underline{z}) = \{\underline{t} = (l = t_1 \geq ... \geq t_{n-2l}) \in \mathbb{Z}^{n-2l}_{\geq 0} | \substack{z^{(2)}_{2l+i} - z^{(2)}_{2l+i+1}\geq 2t_i - 2t_{i+1}\\ 1\leq i \leq n-2l-1}\}
\end{equation}

and let $W(\underline{z},l,\underline{t})$ denote the set of dominant weights $\lambda$ satisfying the following conditions:

\begin{align}\label{weight_restrict_pfaffian}
\begin{cases}
	\lambda_{2l+i-2t_i} = z_{2l+i}^{(2)} + n - 1 -2t_i, i=1,...,n-2l,\\
	\lambda_{2i} = \lambda_{2i-1}, 0 < 2i < n-2t_{n-2l},\\
	\lambda_{n-2i} = \lambda_{n-2i-1}, 0 \leq i \leq t_{n-2l}-1.
\end{cases}
\end{align}
Then for each $j\geq 0$ we have
\begin{align}\label{decompose_ext_schur_pfaffian}
	\Ext^j_S(J_{\underline{z},l},S) = \bigoplus_{\substack{\underline{t} \in \mathcal{T}_l(\underline{z}) \\ \binom{n}{2} - \binom{2l}{2} - 2\sum^{n-2l}_{i=1}t_i = j \\ \lambda \in W(\underline{z}, l, \underline{t})}} S_\lambda (\mathbb{C}^n)^*
\end{align}
where $J_{\underline{z},l}$ is defined the same way as in section 3, and $S_\lambda (\mathbb{C}^n)*$ appears in degree $-|\lambda|/2$. 
Moreover, we have
\begin{align}\label{decompose_ext_pfaffian}
	\Ext^j_S(S/\Pf^d_{2p},S) = \bigoplus_{(\underline{z},l) \in \mathcal{Z}^d_p} \Ext^j_S(J_{\underline{z},l},S).
\end{align}
where $\mathcal{Z}^d_p$ is defined the same way as in Lemma \ref{z(x)_set}.
\end{theorem}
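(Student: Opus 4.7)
The plan is to mirror the strategy used by Raicu--Weyman in the generic matrix case: build a $\GL$-equivariant filtration of $S/\Pf_{2p}^d$ whose successive quotients are the $J_{\underline{z},l}$ indexed by $\mathcal{Z}^d_p$, and then compute $\Ext^j_S(J_{\underline{z},l},S)$ for each factor via a geometric model on an isotropic Grassmannian combined with Bott's theorem. The filtration itself comes from the Abeasis--Del Fra classification together with the identification $\Pf_{2p}^d = I_{\mathcal{X}^d_p}$: ordering $\mathcal{X}^d_p$ compatibly with dominance and forming the successor quotients via $\mathfrak{succ}(\underline{z}, l)$ produces a $\GL$-equivariant filtration of $S/\Pf_{2p}^d$ whose associated graded is $\bigoplus_{(\underline{z},l) \in \mathcal{Z}^d_p} J_{\underline{z},l}$. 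This gives the identification \eqref{decompose_ext_pfaffian} as $\GL$-representations provided one verifies that the associated filtration spectral sequence degenerates, which follows from the fact that the Schur constituents in distinct $J_{\underline{z},l}$ carry distinct highest weights and so the extensions split.

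The real content of the theorem is then the single-factor decomposition \eqref{decompose_ext_schur_pfaffian}, which I would attack geometrically. The key is to desingularize the support of $J_{\underline{z},l}$ via a suitable vector bundle over the isotropic Grassmannian $\operatorname{IGr}(l, n)$ parameterizing $l$-dimensional subspaces on which the generic skew form vanishes. The Kempf--Lascoux--Weyman geometric technique then replaces $J_{\underline{z},l}$ with a complex of locally free sheaves built out of Schur functors applied to the tautological bundles, and local duality converts $\Ext^j_S(J_{\underline{z},l}, S)$ into sheaf cohomology on $\operatorname{IGr}(l, n)$ of a $\GL$-equivariant bundle determined by $\underline{z}$. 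Bott's theorem for the isotropic Grassmannian picks out the regular shifts $\mu + \rho$: surviving Weyl group elements are indexed precisely by the tuples $\underline{t} \in \mathcal{T}_l(\underline{z})$, the length of the resulting element equals $\binom{n}{2} - \binom{2l}{2} - 2\sum_{i=1}^{n-2l} t_i$ and so matches the cohomological constraint on $j$, and the dominant straightening of $\mu + \rho$ produces the explicit form of $\lambda$ recorded in \eqref{weight_restrict_pfaffian}.

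The hard part will be translating Bott's theorem on $\operatorname{IGr}(l, n)$ into the explicit combinatorial conditions of $W(\underline{z}, l, \underline{t})$. Unlike the generic matrix case, the relevant Weyl group is a signed permutation group, and the pairing conditions $\lambda_{2i} = \lambda_{2i-1}$ in \eqref{weight_restrict_pfaffian} arise from the column-doubling $\underline{z}^{(2)}$ in the Cauchy identity $\Sym(\bigwedge^2 \mathbb{C}^n) = \bigoplus_{\underline{z}} S_{\underline{z}^{(2)}} \mathbb{C}^n$ together with the fact that the weights of the isotropy representation come in pairs forced by the skew form. Getting this parsing exactly right, and matching it against the chosen desingularization so that the resulting module is $J_{\underline{z},l}$ itself rather than some nearby $\GL$-equivariant quotient, is the delicate bookkeeping step.
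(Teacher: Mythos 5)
This theorem is cited verbatim from Perlman (and from the Raicu--Weyman framework), and the paper under review does not reprove it; so there is no ``paper's own proof'' to compare against, only the proofs in the cited references. Your overall blueprint---a $\GL$-equivariant filtration of $S/\Pf_{2p}^d$ with graded pieces $J_{\underline{z},l}$ indexed by $\mathcal{Z}^d_p$, degeneration of the resulting spectral sequence because the $\GL$-constituents of the various $\Ext^j_S(J_{\underline{z},l},S)$ are pairwise distinct, and a Kempf--Lascoux--Weyman computation of each $\Ext^j_S(J_{\underline{z},l},S)$ via a desingularization and Bott's theorem---is the correct strategy and is essentially the one Perlman follows.

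There is, however, a concrete error in the geometric step. The relevant base of the desingularization is an \emph{ordinary} Grassmannian, not an isotropic Grassmannian, and the Weyl group in the Bott calculation is the symmetric group $\mathfrak{S}_n$, not a signed permutation (hyperoctahedral) group. The ambient symmetry group here is $\GL(V)$ acting on $S = \Sym(\bigwedge^2 V)$; the Pfaffian locus of rank $\le 2r$ is resolved by the total space of $\bigwedge^2 \mathcal{Q}^*$ (or $\bigwedge^2 \mathcal{R}^*$, depending on conventions) over $G(k,V)$, an incidence variety whose fibres record the isotropy condition $W \subset \ker\phi$. The word ``isotropic'' describes the condition built into the bundle, not the parameter space itself, and Bott's theorem is applied to $G(k,V) \cong \GL(V)/P$ with Weyl group $\mathfrak{S}_n$. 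Signed permutations would enter if one were decomposing under $Sp$ or $SO$ on a genuine isotropic Grassmannian, which is a different situation. Because of this confusion your ``hard bookkeeping step'' is pointed in the wrong direction: the pairing conditions $\lambda_{2i}=\lambda_{2i-1}$ in $W(\underline{z},l,\underline{t})$ ultimately come from the column-doubling $\underline{z}\mapsto \underline{z}^{(2)}$ in the plethysm $\Sym(\bigwedge^2 V)=\bigoplus S_{\underline{z}^{(2)}}V$ interacting with ordinary $\mathfrak{S}_n$-Bott, not from a type-$B/C$ Weyl group. One further caveat worth making explicit: the decompositions \eqref{decompose_ext_schur_pfaffian} and \eqref{decompose_ext_pfaffian} are only asserted as $\GL$-equivariant vector space (graded) decompositions, not as $S$-module isomorphisms, which is exactly what the ``distinct highest weights $\Rightarrow$ the long exact sequence splits'' argument delivers.
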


From now on we focus on the sub-maximal pfaffians. Let $S = \Sym(\bigwedge^2 \mathbb{C}^{2n+1})$ and $\Pf := \Pf_{2n}$.

\begin{lemma}\label{non_vanishing_ext_pfaffian}
	In our setting $\Ext^j_S(S/\Pf^d,S) \neq 0$ if and only if $j=2(n-t_3)+1$. Moreover, $\Ext^{2n+1}_S(S/\Pf^d,S) \neq 0$ if and only if $d\geq 2n-1$.
\end{lemma}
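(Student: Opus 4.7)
The plan is to mirror the proof of Lemma \ref{vanishing_degree}, substituting Theorem \ref{perlman_main} for Theorem \ref{schur-decompos}.

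\emph{Step 1: Force $l=n-1$.} I first replicate the argument of Lemma \ref{weights_max_minors}: since $p=n$, the defining inequality $|\underline{z}|+(d-z_1)(l+1)\geq nd$ of $\mathcal{Z}^d_n$, combined with $z_1=\cdots=z_{l+1}\leq d-1$ and $\underline{z}\in\mathcal{P}(n)$, forces $l=n-1$. Indeed, if $l\leq n-2$, one bounds $|\underline{z}|+(d-z_1)(l+1)\leq nz_1+(d-z_1)(n-1)=z_1+d(n-1)\leq nd-1<nd$, a contradiction. Hence $\underline{z}=(c^n)$ for some $0\leq c\leq d-1$, and $\underline{z}^{(2)}=(c,c,\ldots,c,0)$ has $2n$ copies of $c$ followed by a single $0$ in the $(2n+1)$-th slot.

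\emph{Step 2: Pin down $t_1,t_2$ and derive $j$.} Apply Theorem \ref{perlman_main} with ambient dimension $2n+1$, so Perlman's $n$ is replaced by $2n+1$. The tuple $\underline{t}=(t_1,t_2,t_3)\in\mathbb{Z}^3_{\geq 0}$ has $t_1=l=n-1$ by definition. The first inequality in (\ref{Tl(z)}), namely $z^{(2)}_{2l+1}-z^{(2)}_{2l+2}\geq 2(t_1-t_2)$, becomes $0\geq 2(n-1-t_2)$, forcing $t_2=n-1$. The remaining inequality $z^{(2)}_{2l+2}-z^{(2)}_{2l+3}\geq 2(t_2-t_3)$ reads $c\geq 2(n-1-t_3)$. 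Substituting into the degree constraint of (\ref{decompose_ext_schur_pfaffian}),
\[
j=\binom{2n+1}{2}-\binom{2(n-1)}{2}-2(t_1+t_2+t_3)=(6n-3)-4(n-1)-2t_3=2(n-t_3)+1,
\]
which shows that this is the only possible shape of $j$, giving the first statement.

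\emph{Step 3: The case $j=2n+1$.} This degree corresponds to $t_3=0$. The constraint $c\geq 2(n-1)$ together with $c\leq d-1$ gives $d\geq 2n-1$ as necessary. Conversely, if $d\geq 2n-1$, take $c=d-1\geq 2n-2$; the weight conditions (\ref{weight_restrict_pfaffian}) pin $\lambda_1=\lambda_2=c+2$, $\lambda_{2n+1}=2n$, and impose the pairings $\lambda_{2i-1}=\lambda_{2i}$ for $1\leq i\leq n$ (the second pairing rule is vacuous when $t_3=0$). The free intermediate values $\lambda_3=\lambda_4,\lambda_5=\lambda_6,\ldots,\lambda_{2n-1}=\lambda_{2n}$ can be chosen to form a dominant weight in the chain $c+2\geq \lambda_3\geq\cdots\geq \lambda_{2n-1}\geq 2n$, which is nonempty precisely because $c+2\geq 2n$. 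Hence $W(\underline{z},n-1,(n-1,n-1,0))$ is nonempty and $\Ext^{2n+1}_S(S/\Pf^d,S)\neq 0$.

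The main obstacle is the index bookkeeping forced by Perlman's convention: his ambient-dimension parameter, also called $n$, plays the role of our $2n+1$, and the two pairing rules in (\ref{weight_restrict_pfaffian}) must be unwound carefully, especially when checking nonemptiness of $W$. Once the translation of indices is set up correctly, the entire argument is a direct parallel to the proof of Lemma \ref{vanishing_degree}.
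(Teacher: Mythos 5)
Your proof is correct and follows essentially the same route as the paper: invoke Lemma \ref{weights_max_minors} (you re-derive it, which is fine but unnecessary since the paper cites it directly) to force $l=n-1$ and $\underline{z}=(c^n)$, then push $\underline{z}^{(2)}=(c^{2n})$ through the constraints of Theorem \ref{perlman_main} to pin $t_1=t_2=n-1$ and obtain $j=2(n-t_3)+1$. The one mild stylistic difference is in the ``moreover'' part: the paper deduces $d\geq 2n-1$ purely from dominance of $\lambda$ in $(\ref{weights_restriction_pfaffian_2})$ (i.e., $\lambda_1=c+2\geq\lambda_{2n+1}=2n$), whereas you also invoke the $\mathcal{T}_l(\underline{z})$-inequality $c\geq 2(n-1-t_3)$, and you explicitly verify the converse by constructing a nonempty dominant chain $c+2\geq\lambda_3\geq\cdots\geq\lambda_{2n-1}\geq 2n$. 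These two constraints coincide here, so the argument lands in the same place; your version is a bit more careful about making the ``if and only if'' bidirectional, which the paper leaves partly implicit.
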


\begin{proof}
Recall that when $p=n$, by Lemma \ref{weights_max_minors} we have that $(\underline{z},l) = ((c^n),n-1) \in \mathcal{Z}^d_n$, and so for such $\underline{z}$ we have $\underline{z}^{(2)} = (c^{2n})$ for $0 \leq c \leq d-1$. Applying this information to (\ref{Tl(z)}) we see that $t_1 = t_2 = n-1$ and since $z^{(2)}_{2n+1} = 0$, we have $z^{(2)}_{2n} - z^{(2)}_{2n+1} \geq 2(t_2 - t_3) \implies t_3 \geq (2(n-1)-c) / 2$. Then we get that

\begin{align*}
	&\binom{2n+1}{2} - \binom{2n-2}{2} - 2(2n-2 + t_3) = j\\
	&\implies 2(n-t_3)+1 = j.
\end{align*}
Moreover, applying this information to (\ref{weight_restrict_pfaffian}), we get that $W(\underline{z}, l, t) = W((c^n),n-1,\underline{t})$ consist of weights of the form
\begin{align}\label{weights_restriction_pfaffian}
	\begin{cases}
		\lambda_1 = \lambda_2 = c+2 \leq d+1,\\
		\lambda_{2(n-t_3)+1} = 2n-2t_3,\\
		\lambda_{2i} = \lambda_{2i-1}, 0 \leq 2i < 2n+1-2t_3,\\
		\lambda_{2n+1-2i} = \lambda_{2n-2i}, 0 \leq i \leq t_3-1.
	\end{cases}
\end{align}

In particular, when $j=2n+1$, we have $t_3 = 0$ and
\begin{align}\label{weights_restriction_pfaffian_2}
	\begin{cases}
		\lambda_1 = \lambda_2 = c+2 \leq d+1,\\
		\lambda_{2i} = \lambda_{2i-1}, 0 \leq 2i < 2n+1,\\
		\lambda_{2n+1} = 2n.
	\end{cases}
\end{align}
Since the weights are dominant, $2n \leq d+1 \implies 2n-1 \leq d$. 
\end{proof}

\begin{prop}\label{finite_length_pfaffian}
	In our setting, $\ell(\Ext^j_S(S/\Pf^d,S)) < \infty$ and is nonzero if and only if $j=2n+1$.
\end{prop}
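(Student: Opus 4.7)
The plan is to mirror the proof of Proposition \ref{length_ext_fin} almost verbatim, trading the determinantal decomposition of Theorem \ref{schur-decompos} for Perlman's decomposition in Theorem \ref{perlman_main}. The combinatorial core is the same: by Lemma \ref{length=dim} length equals $\mathbb{C}$-dimension, and each Schur module $S_\lambda(\mathbb{C}^{2n+1})^*$ is finite-dimensional by Proposition \ref{dim_schur}, so finiteness of $\ell(\Ext^j_S(S/\Pf^d,S))$ is equivalent to finiteness of the indexing weight set $W$ appearing in (\ref{decompose_ext_schur_pfaffian}).

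The first step is to reduce the outer sum. The decomposition (\ref{decompose_ext_pfaffian}) runs over the finite set $\mathcal{Z}^d_n$, which by the same argument as Lemma \ref{weights_max_minors} (noting $p=n$ here) collapses to pairs $((c^n),n-1)$ with $0 \leq c \leq d-1$. For each such summand the weight set is $W((c^n),n-1,\underline{t})$ with $\underline{t}=(n-1,n-1,t_3)$, and Lemma \ref{non_vanishing_ext_pfaffian} pins down $t_3$ from $j$ via $j=2(n-t_3)+1$.

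Next I would split on $t_3$. When $t_3=0$, i.e.\ $j=2n+1$, the restrictions (\ref{weights_restriction_pfaffian_2}) pair every odd-indexed coordinate with its even-indexed neighbour for $1 \leq i \leq n$, fix $\lambda_1=c+2$ and $\lambda_{2n+1}=2n$, and leave $\lambda_3=\lambda_4,\lambda_5=\lambda_6,\dots,\lambda_{2n-1}=\lambda_{2n}$ free subject only to the dominance chain
\[
c+2 \geq \lambda_3 \geq \lambda_5 \geq \cdots \geq \lambda_{2n-1} \geq 2n.
\]
This is a finite lattice interval, so $W$ is finite for every $c$, each summand is a finite direct sum of finite-dimensional Schur modules, and the total length is finite; it is nonzero as soon as $c+2 \geq 2n$, which is the $d \geq 2n-1$ condition of Lemma \ref{non_vanishing_ext_pfaffian}. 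When $t_3 \geq 1$, the third condition in (\ref{weights_restriction_pfaffian}) groups the last $2t_3$ coordinates into $t_3$ unfixed pairs $(\lambda_{2n-2t_3+2}=\lambda_{2n-2t_3+3}),\dots,(\lambda_{2n}=\lambda_{2n+1})$, bounded above by the fixed entry $\lambda_{2n-2t_3+1}=2n-2t_3$ but with no dominance lower bound. Since dominant weights of $\GL_{2n+1}$ admit arbitrarily negative entries, setting all tail pairs equal to any integer $\nu \leq 2n-2t_3$ produces infinitely many admissible $\lambda$, each contributing a nonzero Schur module; hence $W$ is infinite and the length is infinite.

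Finally, when $j$ is not of the form $2(n-t_3)+1$ at all, $\Ext^j_S(S/\Pf^d,S)=0$ by Lemma \ref{non_vanishing_ext_pfaffian}, so the length is zero and in particular not nonzero. Combining the three cases gives the proposition. The only delicate point is the bookkeeping for the tail coordinates in (\ref{weights_restriction_pfaffian}): one must verify that the final $t_3$ pairs inherit no lower bound from dominance when $t_3 \geq 1$, and that the $t_3=0$ case is genuinely the unique index for which every coordinate is squeezed between two fixed values.
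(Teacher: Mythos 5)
Your proposal is correct and takes essentially the same approach as the paper: both reduce via Lemma \ref{length=dim} and Proposition \ref{dim_schur} to deciding when the weight set $W$ in (\ref{decompose_ext_schur_pfaffian}) is finite, observe the upper bound $\lambda_1=\lambda_2 \leq d+1$, and locate the lower bound in the fixed entry $\lambda_{2n+1}=2n$ from (\ref{weights_restriction_pfaffian_2}), which exists precisely when $t_3=0$, i.e.\ $j=2n+1$. You spell out the tail-pair structure of (\ref{weights_restriction_pfaffian}) for $t_3\geq 1$ more explicitly than the paper's terse argument, and you correctly note the implicit $d\geq 2n-1$ requirement for nonvanishing, but the underlying reasoning is identical.
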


\begin{proof}
we would like to identify the set $W(\underline{z},l,t)$ that is finite. For each $d$ we have $\lambda_1 = \lambda_2 \leq d+1$, so we have an upper bound. The lower bound comes from $\lambda_{2n}$. We see from from (\ref{non_vanishing_ext_pfaffian}) that if $j\neq 2n+1$ then there is no lower bound for the set. On the other hand we see from (\ref{non_vanishing_ext_pfaffian}) that when $j=2n+1$ the set is finite and nonzero. This means by Lemma \ref{length=dim} the only $\Ext^j_S(S/I^d, S)$ with finite length is the one with $j=2n+1$.
\end{proof}

As in the case of maximal minors of generic matrix, we have the corresponding injectivity maps for the $\Ext$ modules.
\begin{lemma}(see \cite[Corollary 5.4]{RaicuWeymanWitt})\label{ext_pfaffian}
	In our setting, we have $$\Hom_S(\Pf^d,S) = S, \Ext^1_S(S/\Pf^d,S) = 0,$$ and $$\Ext^{j+1}_S(S/\Pf^d,S) = \Ext^j(\Pf^d,S)$$ for $j>0$.	
\end{lemma}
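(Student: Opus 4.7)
The plan is to deduce all three assertions from the long exact sequence of $\Ext$ attached to $0 \to \Pf^d \to S \to S/\Pf^d \to 0$, once we know that $\Ext^0_S(S/\Pf^d, S) = \Ext^1_S(S/\Pf^d, S) = 0$. Since $S$ is free over itself, $\Ext^i_S(S,S) = 0$ for $i \geq 1$, so the long exact sequence collapses to
\begin{equation*}
0 \to \Ext^0_S(S/\Pf^d, S) \to S \to \Hom_S(\Pf^d, S) \to \Ext^1_S(S/\Pf^d, S) \to 0,
\end{equation*}
together with isomorphisms $\Ext^j_S(\Pf^d, S) \cong \Ext^{j+1}_S(S/\Pf^d, S)$ for every $j \geq 1$. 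Granting the two Ext-vanishings, the displayed sequence gives $\Hom_S(\Pf^d,S) = S$ and $\Ext^1_S(S/\Pf^d, S) = 0$, and the shift isomorphism supplies the third claim.

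To establish the two vanishings I would appeal to the decomposition in Theorem \ref{perlman_main}. The argument of Lemma \ref{weights_max_minors} applies verbatim in the Pfaffian setting, since $\mathcal{Z}^d_n$ is defined purely combinatorially in Lemma \ref{z(x)_set} and is insensitive to whether we are working with minors or Pfaffians. Hence every $(\underline{z}, l) \in \mathcal{Z}^d_n$ has $l = n-1$ and $\underline{z} = (c^n)$ for some $0 \leq c \leq d-1$, so $\underline{z}^{(2)} = (c^{2n})$. The tuple $\underline{t} \in \mathcal{T}_{n-1}(\underline{z})$ then lies in $\mathbb{Z}^3_{\geq 0}$ with $t_1 = n-1$, and the row-difference inequality in (\ref{Tl(z)}) at $i = 1$ reads $c - c \geq 2(t_1 - t_2)$, which forces $t_2 = n-1$ as well.

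Consequently the cohomological degree in (\ref{decompose_ext_schur_pfaffian}) reduces to
\begin{equation*}
j \;=\; \binom{2n+1}{2} - \binom{2n-2}{2} - 2(t_1 + t_2 + t_3) \;=\; 2n + 1 - 2t_3,
\end{equation*}
which is always odd; in particular $j = 0$ contributes nothing, while $j = 1$ would require $t_3 = n$ and contradict $t_3 \leq t_2 = n-1$. Hence both $\Ext^0_S(S/\Pf^d, S)$ and $\Ext^1_S(S/\Pf^d, S)$ vanish, and the three assertions follow. The only substantive step is this parity count, which is essentially a special case of the computation already performed in the proof of Lemma \ref{non_vanishing_ext_pfaffian}, so I do not anticipate any novel obstacle.
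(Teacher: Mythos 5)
The paper offers no proof of this lemma—it is cited verbatim from \cite[Corollary 5.4]{RaicuWeymanWitt}—so you are not reproducing an argument the paper spells out, but supplying one. Your argument is correct and is self-contained within the paper's own machinery: the long exact sequence applied to $0 \to \Pf^d \to S \to S/\Pf^d \to 0$ reduces everything to $\Ext^0_S(S/\Pf^d,S)=\Ext^1_S(S/\Pf^d,S)=0$ (the torsion argument gives $\Ext^0=0$ directly, but reading it off the decomposition as you do is equally valid); the reduction to $l=n-1$, $\underline{z}=(c^n)$ does carry over verbatim since Lemma \ref{z(x)_set} and Lemma \ref{weights_max_minors} are purely combinatorial facts about $\mathcal{Z}^d_p=\mathcal{Z}(\mathcal{X}^d_p)$, independent of whether the ambient setting is determinantal or Pfaffian, and the paper uses this same reduction in Lemma \ref{non_vanishing_ext_pfaffian}; and the constraint computation forcing $t_1=t_2=n-1$ and $j=2n+1-2t_3\in\{3,5,\dots,2n+1\}$ (odd, $\geq 3$) is exactly the calculation in Lemma \ref{non_vanishing_ext_pfaffian}, so the two needed vanishings drop out. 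The original proof in \cite{RaicuWeymanWitt} proceeds differently—it does not have Perlman's $\Ext$-module decomposition (Theorem \ref{perlman_main}) at its disposal and instead works with the structure of the powers $\Pf^d$ directly (their equivariant filtration and depth/reflexivity considerations). Your route buys you a proof entirely internal to the present paper's toolkit, at the cost of invoking the full decomposition theorem for what is, in the end, only a vanishing in two cohomological degrees; the reference's approach is more elementary in that respect but less aligned with the rest of the paper. Either is a legitimate way to justify the citation.
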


\begin{prop}\label{injection_ext_pfaffian}\cite[Theorem 5.5]{RaicuWeymanWitt}
	Given the short exact sequence $$0 \rightarrow \Pf^d \rightarrow \Pf^{d-1} \rightarrow \Pf^{d-1}/\Pf^d \rightarrow 0,$$ we have the induced injection map $$\Ext^j_S(\Pf^{d-1},S) \hookrightarrow \Ext^j_S(\Pf^d,S)$$ for any $j$ such that $\Ext^j_S(\Pf^d,S) \neq 0$.
\end{prop}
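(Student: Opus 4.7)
The plan is to parallel the argument of \cite[Theorem 4.5]{RaicuWeymanWitt} underlying Lemma \ref{injectivity} in the generic matrix case, by combining the $\GL$-equivariant Schur decomposition of Theorem \ref{perlman_main} with Schur's lemma applied to a connecting homomorphism. First, via Lemma \ref{ext_pfaffian} and the naturality of the long exact sequence associated with $0 \to \Pf^{d} \to S \to S/\Pf^{d} \to 0$, for $j \geq 1$ the map $\Ext^j_S(\Pf^{d-1}, S) \to \Ext^j_S(\Pf^d, S)$ is identified with the map $\Ext^{j+1}_S(S/\Pf^{d-1}, S) \to \Ext^{j+1}_S(S/\Pf^d, S)$ induced by the surjection $S/\Pf^d \twoheadrightarrow S/\Pf^{d-1}$, while for $j=0$ it is the identity on $S$. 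I would then consider the short exact sequence $0 \to \Pf^{d-1}/\Pf^d \to S/\Pf^d \to S/\Pf^{d-1} \to 0$ and its associated long exact sequence in $\Ext$:
$$\cdots \to \Ext^j_S(\Pf^{d-1}/\Pf^d, S) \xrightarrow{\delta} \Ext^{j+1}_S(S/\Pf^{d-1}, S) \to \Ext^{j+1}_S(S/\Pf^d, S) \to \cdots$$
so that it suffices to prove $\delta = 0$.

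The key observation is that in the sub-maximal pfaffian setting the set $\mathcal{X}^d_n$ reduces to a single element: the constraints $\underline{x} \in \mathcal{P}(n)$, $x_1 \leq d$, and $|\underline{x}| = nd$ force $\underline{x} = (d^n)$, whence $\Pf^d = I_{(d^n)}$. A direct computation of $\mathfrak{succ}((d-1)^n, n-1) = \{\underline{y} : \underline{y} \geq (d^n)\}$ then yields $I_{\mathfrak{succ}((d-1)^n, n-1)} = I_{(d^n)} = \Pf^d$, and therefore
$$J_{((d-1)^n,\, n-1)} \;=\; I_{((d-1)^n)}\big/I_{\mathfrak{succ}((d-1)^n, n-1)} \;=\; \Pf^{d-1}/\Pf^d.$$
Applying Theorem \ref{perlman_main} together with the weight analysis from the proof of Lemma \ref{non_vanishing_ext_pfaffian}, every Schur summand $S_\lambda(\mathbb{C}^{2n+1})^*$ of $\Ext^*_S(\Pf^{d-1}/\Pf^d, S) = \Ext^*_S(J_{((d-1)^n, n-1)}, S)$ satisfies $\lambda_1 = \lambda_2 = d+1$. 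On the other hand, the pfaffian analogue of Lemma \ref{weights_max_minors} decomposes $\Ext^{j+1}_S(S/\Pf^{d-1}, S)$ over $\mathcal{Z}^{d-1}_n = \{((c^n), n-1) : 0 \leq c \leq d-2\}$, and every Schur summand appearing there has $\lambda_1 = c+2 \leq d$.

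Since the source and target of $\delta$ share no irreducible $\GL$-component (their Schur summands are distinguished by the value of $\lambda_1$), Schur's lemma forces the $\GL$-equivariant map $\delta$ to vanish, completing the proof. The one step where one might a priori expect difficulty is the identity $\Pf^{d-1}/\Pf^d = J_{((d-1)^n, n-1)}$; in the generic matrix setting the analogous step requires the filtration machinery of \cite{RaicuWeyman}, but here it is essentially automatic because $\Pf^d = I_{(d^n)}$ is generated by a single Schur summand, so the would-be filtration of $\Pf^{d-1}/\Pf^d$ collapses to a single step. Everything else reduces to a routine application of Schur's lemma to the weight bounds extracted from Theorem \ref{perlman_main}.
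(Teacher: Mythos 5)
Your proof is correct, but note that the paper does not supply its own proof of this statement: it cites \cite[Theorem 5.5]{RaicuWeymanWitt} directly and moves on. What you have done is reconstruct, from the machinery already set up in the paper, the kind of $\GL$-equivariant Schur's-lemma argument that underlies the cited result. The key points you identify are all sound: $\mathcal{X}^d_n$ is the singleton $\{(d^n)\}$, so $\Pf^d = I_{(d^n)}$; the successor set $\mathfrak{succ}((d-1)^n, n-1)$ is exactly $\{\underline{y} \geq (d^n)\}$, giving $J_{((d-1)^n,\,n-1)} = \Pf^{d-1}/\Pf^d$ without invoking any filtration machinery; and the weight restrictions of Theorem \ref{perlman_main} (as unpacked in the proof of Lemma \ref{non_vanishing_ext_pfaffian}) pin $\lambda_1 = \lambda_2 = c+2$ for $J_{((c^n),\,n-1)}$, so the Schur summands of $\Ext^j_S(\Pf^{d-1}/\Pf^d, S)$ have $\lambda_1 = d+1$ while those of $\Ext^{j+1}_S(S/\Pf^{d-1}, S)$ have $\lambda_1 \leq d$, forcing the $\GL$-equivariant connecting map $\delta$ to vanish. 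The reduction to the connecting map via the commutative diagram of long exact sequences (using Lemma \ref{ext_pfaffian} to translate $\Ext^j(\Pf^{\bullet}, S)$ to $\Ext^{j+1}(S/\Pf^{\bullet}, S)$ for $j \geq 1$, and treating $j=0$ separately via $\Hom_S(\Pf^d,S)=S$) is handled correctly. This gives a self-contained proof where the paper relies on an external citation, which is a genuine improvement in the exposition, though of course it is not a new argument at the level of the original source.
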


Combining the above results we get the following.

\begin{theorem}\label{sum_of_length_pfaffian}
	In our setting we have $$\ell(\Ext^j_S(S/\Pf^D,S)) = \sum^D_{d=2n-1}\Ext^j_S(\Pf^{d-1}/\Pf^d,S)$$ for $j=2n+1.$
\end{theorem}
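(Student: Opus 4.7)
The plan is to mirror the argument of Corollary \ref{sum_of_length} almost verbatim, since the pfaffian analogues of each ingredient have already been established. The starting point is the short exact sequence
$$0 \to \Pf^{d-1}/\Pf^d \to S/\Pf^d \to S/\Pf^{d-1} \to 0,$$
to which I would apply $\Ext^\bullet_S(-,S)$ to obtain the long exact sequence
$$\cdots \to \Ext^{j-1}_S(\Pf^{d-1}/\Pf^d,S) \to \Ext^j_S(S/\Pf^{d-1},S) \to \Ext^j_S(S/\Pf^d,S) \to \Ext^j_S(\Pf^{d-1}/\Pf^d,S) \to \cdots.$$

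The key step is to show that for $j=2n+1$ the connecting map $\Ext^j_S(S/\Pf^{d-1},S) \to \Ext^j_S(S/\Pf^d,S)$ is injective, which will force the long exact sequence to split into short exact sequences. Since $j=2n+1>1$, Lemma \ref{ext_pfaffian} identifies $\Ext^j_S(S/\Pf^d,S) \cong \Ext^{j-1}_S(\Pf^d,S)$ and similarly for $d-1$, and under these identifications the connecting map becomes the natural map $\Ext^{j-1}_S(\Pf^{d-1},S) \to \Ext^{j-1}_S(\Pf^d,S)$, which is injective by Proposition \ref{injection_ext_pfaffian} (noting that the relevant $\Ext^j_S(\Pf^d,S) \neq 0$ by Lemma \ref{non_vanishing_ext_pfaffian}). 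Thus for every $d$ we obtain a short exact sequence
$$0 \to \Ext^j_S(S/\Pf^{d-1},S) \to \Ext^j_S(S/\Pf^d,S) \to \Ext^j_S(\Pf^{d-1}/\Pf^d,S) \to 0.$$

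To finish, I would invoke Proposition \ref{finite_length_pfaffian} to ensure all three terms have finite length when $j=2n+1$, so that length is additive along the sequence, yielding
$$\ell(\Ext^j_S(\Pf^{d-1}/\Pf^d,S)) = \ell(\Ext^j_S(S/\Pf^d,S)) - \ell(\Ext^j_S(S/\Pf^{d-1},S)).$$
Summing telescopically from $d=2$ up to $d=D$ and using Lemma \ref{non_vanishing_ext_pfaffian} to kill $\Ext^{2n+1}_S(S/\Pf^d,S)$ for $d<2n-1$ gives the claimed identity. Since every piece has been set up explicitly in the preceding lemmas and propositions, there is no genuine obstacle; the only thing to be careful about is correctly matching the degree shift in Lemma \ref{ext_pfaffian} so that the injectivity of Proposition \ref{injection_ext_pfaffian} applies at the right cohomological index.
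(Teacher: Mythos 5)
Your proof is correct and follows exactly the approach the paper intends: the paper's own proof of Theorem \ref{sum_of_length_pfaffian} simply states that the argument is identical to that of Corollary \ref{sum_of_length}, using Lemma \ref{non_vanishing_ext_pfaffian}, Proposition \ref{finite_length_pfaffian} and Proposition \ref{injection_ext_pfaffian} in place of their generic-matrix counterparts, and you have reconstructed precisely that argument, including the degree shift from Lemma \ref{ext_pfaffian} needed to transfer the injectivity of Proposition \ref{injection_ext_pfaffian} to the map $\Ext^{2n+1}_S(S/\Pf^{d-1},S)\to\Ext^{2n+1}_S(S/\Pf^{d},S)$.
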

\begin{proof}
	The argument is identical to Corollary \ref{sum_of_length} and follows from Lemma \ref{non_vanishing_ext_pfaffian}, Proposition \ref{finite_length_pfaffian} and Proposition \ref{injection_ext_pfaffian}.
\end{proof}

\section{Multiplicities of thickenings of sub-maximal Pfaffians}
In this section we will prove the main theorem for sub-maximal pfaffians.
\begin{theorem}\label{formula_limit_pfaffian}
	Under the same setting as in section 5, we have
	\begin{enumerate}
		\item If $j \neq 2n^2-n-1$ then $\ell(H^j_\frakm(\Pf^{d-1}/\Pf^d))$ and $\ell(H^j_\frakm(S/\Pf^D,S))$ are either zero or $\infty$.
		\item If $j = 2n^2-n-1$ then $\ell(H^j_\frakm(\Pf^{d-1}/\Pf^d))$ and $\ell(H^j_\frakm(S/\Pf^D,S))$ are finite and nonzero for $d$ and $D$ greater than $2n-1$. Moreover we have
			$$\epsilon^j(\Pf) = (2n^2+n)!\prod^{n-1}_{i=0}\dfrac{(2i)!}{(2n+1+2i)!}$$ 
		\item In fact
			$$\frakj^j(\Pf) = \epsilon^j(\Pf).$$
	\end{enumerate}
\end{theorem}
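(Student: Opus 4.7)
The plan mirrors the proof of Theorem \ref{formula_limit}. Throughout I would use local duality (Lemma \ref{isom_of_length}) to recast $\ell(H^j_\frakm(-))$ as $\ell(\Ext^{\Dim(S)-j}_S(-,S))$; since $\Dim(S) = 2n^2+n$, the distinguished local cohomology index $j = 2n^2-n-1$ corresponds to the $\Ext$ index $2n+1$, which is exactly the index identified in Lemma \ref{non_vanishing_ext_pfaffian} and Proposition \ref{finite_length_pfaffian} as the unique one for which the $\Ext$ module is finite and nonzero. Part (1) is then immediate: for $j \neq 2n^2-n-1$ the relevant $\Ext$ module over $S/\Pf^D$ is either zero or of infinite length by Proposition \ref{finite_length_pfaffian}, and the same conclusion for the quotient $\Pf^{d-1}/\Pf^d$ follows from the long exact sequence attached to $0 \to \Pf^{d-1}/\Pf^d \to S/\Pf^d \to S/\Pf^{d-1} \to 0$ combined with the injection of Proposition \ref{injection_ext_pfaffian}.

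For part (2), I would first compute $\frakj^{2n^2-n-1}(\Pf)$ by an analogue of Proposition \ref{existence_limit}. Proposition \ref{injection_ext_pfaffian} identifies $\ell(\Ext^{2n+1}_S(\Pf^{d-1}/\Pf^d, S))$ as the sum of dimensions of the Schur summands $S_\lambda(\mathbb{C}^{2n+1})^*$ appearing in $\Ext^{2n+1}_S(S/\Pf^d, S)$ but not in $\Ext^{2n+1}_S(S/\Pf^{d-1}, S)$; by Lemma \ref{weights_max_minors} and (\ref{weights_restriction_pfaffian_2}), these summands correspond to weights
\begin{equation*}
\lambda = (d+1, d+1, \mu_2, \mu_2, \mu_3, \mu_3, \ldots, \mu_n, \mu_n, 2n), \qquad 2n \leq \mu_n \leq \cdots \leq \mu_2 \leq d+1.
\end{equation*}
Substituting $\nu_i = \mu_i - 2n$ (so $\nu_1 = d+1-2n$) and applying Corollary \ref{same_dim} followed by Proposition \ref{dim_schur}, the dimension product has between-block factor
\begin{equation*}
\frac{(\nu_k-\nu_l+2(l-k))^2\bigl((\nu_k-\nu_l+2(l-k))^2-1\bigr)}{(2(l-k))^2\bigl((2(l-k))^2-1\bigr)}, \qquad 1 \leq k < l \leq n,
\end{equation*}
together with block-to-last-entry factors $(\nu_k+2(n-k+1))(\nu_k+2(n-k)+1)/\bigl((2(n-k+1))(2(n-k)+1)\bigr)$; the doubling pattern in $\lambda$ promotes the Vandermonde exponent from $2$, as in the maximal minors case, to $4$. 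Iterated application of Corollary \ref{Faulhaber} to the nested sums over $\nu_n, \ldots, \nu_2$ then expresses $\ell(\Ext^{2n+1}_S(\Pf^{d-1}/\Pf^d, S))$ as a polynomial in $d$ of degree $2n^2+n-1$ whose leading coefficient is (after the substitution $\nu_i = d\cdot y_{i-1}$ and a Riemann-sum argument) a constant multiple of
\begin{equation*}
\int_\delta \prod_{i=1}^{n-1}(1-y_i)^4 y_i^2 \prod_{1\leq i<j\leq n-1}(y_i-y_j)^4 \, dy, \qquad \delta = \{0 \leq y_{n-1} \leq \cdots \leq y_1 \leq 1\}.
\end{equation*}
This is precisely the integral to which Theorem \ref{Selberg_int_thm} applies with parameters $(a,b,c) = (3,5,2)$, and after passing from $\delta$ to $[0,1]^{n-1}$ via the $(n-1)!$ symmetrization and simplifying the factorials in parallel to the Grassmannian computation in part (2) of Theorem \ref{formula_limit}, the closed form $(2n^2+n)!\prod_{i=0}^{n-1}\frac{(2i)!}{(2n+1+2i)!}$ drops out.

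Part (3) is then a direct analogue of the corresponding step in Theorem \ref{formula_limit}: Theorem \ref{sum_of_length_pfaffian} combined with local duality yields $\ell(H^{2n^2-n-1}_\frakm(S/\Pf^D)) = \sum_{d=2n-1}^{D} \ell(H^{2n^2-n-1}_\frakm(\Pf^{d-1}/\Pf^d))$, each summand is a polynomial in $d$ of degree $2n^2+n-1$ with leading coefficient $\frakj^{2n^2-n-1}(\Pf)/(2n^2+n-1)!$, and Corollary \ref{Faulhaber} forces the total to be a polynomial in $D$ of degree $2n^2+n$ whose leading coefficient matches $\epsilon^{2n^2-n-1}(\Pf)/(2n^2+n)!$, giving the equality. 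The main obstacle I anticipate is the careful extraction of the top-degree part of the dimension product: one must track exactly which summands survive at degree $2n^2+n-1$, keep the multiplicative constants organized through each application of Corollary \ref{Faulhaber}, and verify that the Selberg evaluation with $(a,b,c) = (3,5,2)$ really telescopes to the claimed product $\prod_{i=0}^{n-1}(2i)!/(2n+1+2i)!$. The promotion of the Vandermonde exponent from $2$ to $4$, forced by the two-fold repetition of each $\mu_i$ in $\lambda$, is the structurally novel feature relative to the maximal-minors setting and is the source of most of the computational subtlety.
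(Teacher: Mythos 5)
Your proposal is correct and follows essentially the same route as the paper's proof: recast $\ell(H^j_\frakm(-))$ via local duality to $\ell(\Ext^{2n+1}_S(-,S))$, use Lemma \ref{non_vanishing_ext_pfaffian} and Proposition \ref{finite_length_pfaffian} for part (1), then compute the length of $\Ext^{2n+1}_S(\Pf^{d-1}/\Pf^d,S)$ by summing Schur-module dimensions over the relevant weights $\lambda = (d+1,d+1,\mu_2,\mu_2,\ldots,\mu_n,\mu_n,2n)$, pass to a Riemann integral via Euler--Maclaurin / Faulhaber, and evaluate by Selberg with $(a,b,c)=(3,5,2)$ before telescoping the sum in part (3) using Theorem \ref{sum_of_length_pfaffian}. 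Your observation that the doubling in $\lambda$ promotes the Vandermonde exponent from $2$ to $4$, and your explicit block-pair factor
\[
\frac{(\nu_k-\nu_l+2(l-k))^2\bigl((\nu_k-\nu_l+2(l-k))^2-1\bigr)}{(2(l-k))^2\bigl((2(l-k))^2-1\bigr)},
\]
match exactly what the paper's display for $\Dim(S_\lambda(\mathbb{C}^{2n+1})^*)$ produces after regrouping, so the proposal reproduces Proposition \ref{existence_limit_pfaffian_j} and the subsequent Selberg simplification faithfully.
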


\begin{remark}\label{Orthogonal_Grass}
	As in the case of maximal minors (Remark \ref{Grassmannian}), there is a geometric interpretation of this formula. Let $OG(a,b)$ denote the Orthogonal Grassmannian. By the discussion of \cite[p83, p88]{Totaro}, we have
	\begin{align*}
		\operatorname{deg}(SO(2a+1)/U(a)) = \operatorname{deg}(OG(a,2a+1)) = ((a^2+a)/2)!\dfrac{1!2!...(a-1)!}{1!3!...(2a-1)!}.
	\end{align*}
	Replace $a$ by $2n$, we get 
	\begin{align*}
		\operatorname{deg}(OG(2n,4n+1)) = (2n^2+n)!\dfrac{1!2!...(2n-1)!}{1!3!...(4n-1)!}.
	\end{align*}
	After the cancellation, we end up with the same formula of $\epsilon^{2n^2-n-1}(\Pf)$, and thus $\epsilon^{2n^2-n-1}(\Pf)$ (and $\frakj^{2n^2-n-1}(\Pf)$) must be an integer. 
\end{remark}

Again we will first prove the existence of $\frakj^j(\Pf)$, then use it to prove the existence of $\epsilon^j(\Pf)$.

\begin{prop}\label{existence_limit_pfaffian_j}
	Let $$C = (2n^2+n-1)!\prod_{1\leq i \leq 2n} \dfrac{1}{i!},$$ and let $\delta = \{0 \leq x_{n-1} \leq ... \leq x_1 \leq 1\} \subset \mathbb{R}^{n-1}$, $dx = dx_{n-1...dx_1}$. Then $\ell(H^{2n^2-n-1}_\frakm(\Pf^{d-1}/Pf^d)) \leq \infty$ and is nonzero for $d \geq 2n-1$. Moreover the limit
		$$\frakj^{2n^2-n-1}(\Pf) = \lim_{d\rightarrow \infty}\dfrac{(2n^2+n-1)!\ell(H^{2n^2-n-1}(\Pf^{d-1}/\Pf^d))}{d^{2n^2+n-1}}$$
	exists and is equal to
	$$C\bigintsss_{\delta} \prod_{1\leq i < j\leq n-1}(x_i - x_j)^4\prod_{1\leq i \leq n-1}x_i^2(1-x_i)^4dx.$$
\end{prop}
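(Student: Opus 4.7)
The plan is to mirror the strategy of Proposition \ref{existence_limit} in the pfaffian setting. First, the finiteness and nonvanishing of $\ell(H^{2n^2-n-1}_\frakm(\Pf^{d-1}/\Pf^d))$ for $d \geq 2n-1$ are immediate consequences of Proposition \ref{finite_length_pfaffian} combined with the local duality in Lemma \ref{isom_of_length}: since $\Dim S = \binom{2n+1}{2} = 2n^2+n$, local cohomology in degree $2n^2-n-1$ is exchanged for the $\Ext$ in degree $2n+1$. By Proposition \ref{injection_ext_pfaffian} and Theorem \ref{sum_of_length_pfaffian}, the length $\ell(\Ext^{2n+1}_S(\Pf^{d-1}/\Pf^d,S))$ equals the total dimension of the Schur modules $S_\lambda(\CC^{2n+1})^*$ indexed by weights $\lambda$ appearing at level $d$ but not at level $d-1$.

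By Theorem \ref{perlman_main} and the analysis in Lemma \ref{non_vanishing_ext_pfaffian}, at cohomological index $j = 2n+1$ the only relevant pair in $\mathcal{Z}^d_n$ is $(\underline{z},l) = ((d-1)^n, n-1)$ with $t_1 = t_2 = n-1$ and $t_3 = 0$. Substituting into (\ref{weights_restriction_pfaffian}) gives weights of the form
\begin{equation*}
\lambda = (d+1,\, d+1,\, \lambda_3, \lambda_3,\, \lambda_5, \lambda_5,\, \ldots,\, \lambda_{2n-1}, \lambda_{2n-1},\, 2n)
\end{equation*}
subject to $d+1 \geq \lambda_3 \geq \lambda_5 \geq \cdots \geq \lambda_{2n-1} \geq 2n$. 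The ``new'' weights at level $d$ are exactly those with $\lambda_1 = d+1$. Setting $\epsilon_i = \lambda_{2i+1} - 2n$ for $1 \leq i \leq n-1$ parametrizes them by the simplex $0 \leq \epsilon_{n-1} \leq \cdots \leq \epsilon_1 \leq d - 2n + 1$.

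I would then compute $\Dim_\CC S_\lambda(\CC^{2n+1})^*$ via Proposition \ref{dim_schur}, first normalizing $\lambda_{2n+1} = 0$ by Corollary \ref{same_dim}. The paired structure causes the product of $\binom{2n+1}{2}$ factors to split by block pair: within-pair differences are trivial; each pair-to-top-pair block contributes four factors asymptotic to $(d-\epsilon_k)$; each pair-to-pair block contributes four factors asymptotic to $(\epsilon_k - \epsilon_l)$; each pair-to-last-position block contributes two factors asymptotic to $\epsilon_k$; and the top-pair-to-last-position contributes a constant of order $d^2$. The result is a polynomial in $(d, \epsilon_1, \ldots, \epsilon_{n-1})$ of total degree $2n^2$ whose leading form is
\begin{equation*}
\frac{1}{\prod_{1 \leq k \leq 2n} k!}\, d^2 \prod_{1 \leq k \leq n-1}(d-\epsilon_k)^4 \epsilon_k^2 \prod_{1 \leq k < l \leq n-1}(\epsilon_k - \epsilon_l)^4.
\end{equation*}

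Summing over the simplex in the $\epsilon_i$'s and applying Corollary \ref{Faulhaber} yields a polynomial in $d$ of total degree $2n^2 + (n-1) = 2n^2+n-1$, matching the exponent in the definition of $\frakj$. Extracting its leading coefficient via Theorem \ref{EM} converts the iterated sum into a Riemann integral over $\delta$ under the substitution $x_i = \epsilon_i/d$, and after multiplying by $(2n^2+n-1)!$ one obtains the claimed formula. The main obstacle will be the combinatorial bookkeeping of the Schur dimension formula: each factor must be grouped by its block pair and its asymptotic behavior in $d$ tracked carefully; once this is in place, the reduction to the Euler--Maclaurin limit is identical to the maximal-minors case, with the squares in the integrand of Proposition \ref{existence_limit} replaced by fourth powers because each free parameter $\epsilon_i$ now controls two coordinates of $\lambda$.
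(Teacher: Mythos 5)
Your proposal is correct and follows essentially the same route as the paper's own proof: local duality plus Proposition \ref{finite_length_pfaffian} for finiteness, the injectivity from Proposition \ref{injection_ext_pfaffian} and Theorem \ref{sum_of_length_pfaffian} to isolate the Schur modules with $\lambda_1=\lambda_2=d+1$, then the Weyl dimension formula summed over the simplex $0\le\epsilon_{n-1}\le\cdots\le\epsilon_1\le d+1-2n$ and passage to the Riemann integral via Euler--Maclaurin. Your block-by-block bookkeeping of the hook factors (within-pair trivial, top-pair-to-pair-$k$ giving $(d-\epsilon_k)^4$, pair-to-pair giving $(\epsilon_k-\epsilon_l)^4$, pair-to-last giving $\epsilon_k^2$, top-to-last giving $d^2$) reproduces the paper's computation in (\ref{dim_each_schur})--(\ref{sum_dim_schur_pfaffian}), and your identification of the overall denominator as $\prod_{k=1}^{2n}k!$ agrees with the paper's piecemeal account of the constant in (\ref{limit_int_pfaffian}).
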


\begin{proof}
Let $m:=\Dim(S) = 2n^2+n$ and fix $j=2n+1$. As in section 4, we first calculate $\ell(\Ext^j_S(\Pf^{d-1}/\Pf^d,S))$. Since by Lemma \ref{length=dim} this is equal to the dimension of $\Ext^j_S(\Pf^{d-1}/\Pf^d,S)$ as $\mathbb{C}$-vector space, we only need to consider the direct summand appears in (\ref{decompose_ext_schur_pfaffian}) with weight $\lambda$ satisfying $\lambda_1 = \lambda_2 = d+1$. Therefore we get $$\Ext^j_S(\Pf^{d-1}/\Pf^d,S) = \bigoplus S_\lambda (\mathbb{C}^{2n+1})^*$$ where by (\ref{weight_restrict_pfaffian}) $\lambda$ is of the form $\lambda = (d+1,d+1,\lambda_2,\lambda_2,...,\lambda_n,\lambda_n,2n)$, and we will calculate

\begin{align*}
	\ell(\Ext^j_S(\Pf^{d-1}/\Pf^d,S)) = \Dim(S_\lambda (\mathbb{C}^{2n+1})^*).
\end{align*}

Let $0\leq \epsilon_1 \leq ... \leq \epsilon_{n-1}\leq d+1-2n$, then we can rewrite $$\lambda = (d+1,d+1, 2n+\epsilon_1, 2n+\epsilon_1,...,2n+\epsilon_{n-1}, 2n+\epsilon_{n-1}, 2n).$$ 
Now by Corollary \ref{same_dim} again we have 
$$S_\lambda (\mathbb{C}^{2n+1})^* \cong S_{(d+1-2n,d+1-2n,\epsilon_1,\epsilon_1,...,\epsilon_{n-1},\epsilon_{n-1},0)}(\mathbb{C}^{2n+1})^*,$$
by Proposition \ref{dim_schur} we calculate, for a fixed set of $\epsilon_1,...,\epsilon_{n-1}$, the dimension of $S_\lambda(\mathbb{C}^{2n+1})^*$. For simplicity we let $c := d+1-2n$.

\begin{align}\label{dim_each_schur}
	\begin{split}
		&\Dim(S_\lambda (\mathbb{C}^{2n+1})^*) =\prod_{1\leq i < j\leq n-1}\dfrac{\lambda_i-\lambda_j + j-i}{j-i}=\\
	&\dfrac{c+2n+1-1}{2n+1-1}\dfrac{c+2n+1-2}{2n+1-2}\prod_{1\leq i\leq n-1}\dfrac{\epsilon_i+2n+1-(2i+1)}{2n+1-(2i+1)}\dfrac{\epsilon_i+2n+1-(2n+2)}{2n+1-(2i+2)}\\
	&\prod_{1\leq i \leq n-1}\big(\dfrac{c-\epsilon_i+(2i+1)-1}{(2i+1)-1}\dfrac{c-\epsilon_i+(2i+2)-1}{(2i+2)-1}\dfrac{c-\epsilon_i+(2i+1)-2}{(2i+1)-2}\dfrac{c-\epsilon_i+(2i+2)-2}{(2i+2)-2}\big)\\
	&\prod_{1\leq i<j \leq n-1}\dfrac{\epsilon_i-\epsilon_j+(2j+2)-(2i+2)}{(2j+2)-(2i+2)}\dfrac{\epsilon_i-\epsilon_j+(2j+1)-(2i+1)}{(2j+1)-(2i+1)}\\
	&\dfrac{\epsilon_i-\epsilon_j+(2j+2)-(2i+1)}{(2j+2)-(2i+1)}\dfrac{\epsilon_i-\epsilon_j+(2j+1)-(2i+2)}{(2j+1)-(2i+2)}
	\end{split}
\end{align}

Next, we need to add together (\ref{dim_each_schur}) for all possible $\epsilon_1,...,\epsilon_{n-1}$.

\begin{align}\label{sum_dim_schur_pfaffian}
	\begin{split}
	&\ell(\Ext^j_S(\Pf^{d-1}/\Pf^d,S)) \\
	&= \sum_{0\leq \epsilon_{n-1} \leq ... \leq \epsilon_1 \leq c} (\ref{dim_each_schur})\\
	&= \dfrac{c+2n+1-1}{2n+1-1}\dfrac{c+2n+1-2}{2n+1-2}\\
	&\sum_{\epsilon_1 = 0}^c \dfrac{\epsilon_1+2n-2}{2n-2}\dfrac{\epsilon_1+2n-3}{2n-3}\Big(\dfrac{c-\epsilon_{1}+2}{2}\dfrac{c-\epsilon_{1}+1}{1}\dfrac{c-\epsilon_1+3}{3}\dfrac{c-\epsilon_{1}+2}{2}\Big)\\
	&...\\
	&\sum_{\epsilon_{n-1}=0}^{\epsilon_{n-2}} \dfrac{\epsilon_{n-1}+1}{1}\dfrac{\epsilon_{n-1}+2}{2}\\
	&\Big(\dfrac{c-\epsilon_{n-1}+2n-2}{2n-2}\dfrac{c-\epsilon_{n-1}+2n-1}{2n-1}\dfrac{c-\epsilon_{n-1}+2n-3}{2n-3}\dfrac{c-\epsilon_{n-1}+2n-2}{2n-2}\Big)\\
	&\Big(\prod_{1\leq i \leq n-2}\dfrac{\epsilon_i-\epsilon_{n-1}+2n-(2i+2)}{2n-(2i+2)}\dfrac{\epsilon_i-\epsilon_{n-1}+2n-1-(2i+1)}{(2n-1)-(2i+1)}\\
	&\dfrac{\epsilon_i-\epsilon_{n-1}+2n-(2i+1)}{2n-(2i+1)}\dfrac{\epsilon_i-\epsilon_{n-1}+2n-1-(2i+2)}{(2n-1)-(2i+2)}\Big)
	\end{split}
\end{align}
An argument similar to one in the proof of Proposition \ref{existence_limit} shows that the above sum can be written as a polynomial of degree $2n^2+n-1$ in $d$.

Moreover, an argument similar to the one in the proof of Proposition \ref{existence_limit} shows that, using (\ref{sum_dim_schur_pfaffian}), $\frakj^{2n^2-n-1}(\Pf)$ can be written as
\begin{align}\label{limit_int_pfaffian}
	\begin{split}
	&(2n^2+n-1)!\dfrac{1}{(2n)(2n-1)}\dfrac{1}{(2n-2)!}\dfrac{1}{(2n-1)!(2n-2)!}\prod_{1\leq i \leq 2n-3}\dfrac{1}{i!}\cdot\\
	&\bigintsss_{\delta} \prod_{1\leq i < j\leq n-1}(x_i - x_j)^4\prod_{1\leq i \leq n-1}x_i^2(1-x_i)^4 dA
	\end{split}
\end{align}
where $\delta = \{0 \leq \epsilon_{n-1} \leq ... \leq \epsilon_1 \leq d+1-2n\}$, which, after simplification, is precisely the formula in our assertion. To be more precise, the factor $1/(2n-2)!$ comes from the terms of the form $\epsilon_i/(2i+1-(2n+1))$ and $\epsilon_i/(2i+2-(2n+1))$. The factor $1/(2n-1)!(2n-2)!$ comes from the terms $$\dfrac{c-\epsilon_i}{2i+1-1}\dfrac{c-\epsilon_i}{2i+1-2}\dfrac{c-\epsilon_i}{2i+2-1}\dfrac{c-\epsilon_i}{2i+2-2}.$$ Finally, the factor $\prod_{1\leq i \leq 2n-3}(1/i!) $ comes from the rest of the products involving the terms $\epsilon_i-\epsilon_j$.

\end{proof}

The proof of the main theorem of this section is similar to Theorem \ref{formula_limit}.
\begin{proof}[Proof of Theorem \ref{formula_limit_pfaffian}]
	(1) is clear from Lemma \ref{non_vanishing_ext_pfaffian} and Proposition \ref{finite_length_pfaffian}.

	(2) Apply the Selberg integral (Theorem \ref{Selberg_int_thm}) to the formula we got in Proposition \ref{existence_limit_pfaffian_j}. In this case we have $a=3, b=5, c=2$. Therefore we can further simplified (\ref{limit_int_pfaffian}) to
	\begin{align*}
		&\dfrac{(2n^2+n)!}{2n^2+n}\dfrac{1}{(n-1)!}\prod_{1\leq i \leq 2n}\dfrac{1}{i!}\prod^{n-2}_{i=0}\dfrac{(2+2i)!(2+2i)!(4+2i)!}{2(2n+3+2i)!}\\
		&=\dfrac{(2n^2+n)!}{2n^2+n}\dfrac{1}{(n-1)!}\prod_{1\leq i \leq n}\dfrac{1}{(2i)!}\dfrac{1}{(2i-1)!}\prod^{n-2}_{i=0}\dfrac{(2+2i)!(2+2i)!(4+2i)!}{2(2n+3+2i)!}\\
		&=\dfrac{(2n^2+n)!}{2n^2+n}\dfrac{1}{(n-1)!}\dfrac{1}{(2n)!}\prod_{0\leq i \leq n-2}\dfrac{1}{(2i+3)!}\prod^{n-2}_{i=0}\dfrac{(2+2i)!(4+2i)!}{2(2n+3+2i)!}\\
		&=(2n^2+n)!\dfrac{1}{(2n+1)!n!}\prod^{n-2}_{i=0}\dfrac{(2+2i)!(i+2)}{(2n+3+2i)!}\\
		&=(2n^2+n)!\dfrac{1}{(2n+1)!}\prod^{n-2}_{i=0}\dfrac{(2+2i)!}{(2n+3+2i)!}\\
		&=(2n^2+n)!\prod^{n-1}_{i=0}\dfrac{(2i)!}{(2n+1+2i)!}.
	\end{align*}
	and therefore by Proposition \ref{existence_limit_pfaffian_j} we have $$\frakj^{2n^2-n-1}(\Pf) = (2n^2+n)!\prod^{n-1}_{i=0} \dfrac{(2i)!}{(2n+1+2i)!},$$ which completes the proof of (2).

	(3) Applying Lemma \ref{ext_pfaffian} and Proposition \ref{injection_ext_pfaffian} then the proof is similar to Theorem \ref{formula_limit} (3).
\end{proof}


%

\section{Open questions}
Our approach relies on the vector space decompositions of $\Ext^j_S(S/I_n^d,S)$ and $\Ext^j_S(S/\Pf_{2n}^d,S)$ for $S=\Sym(\mathbb{C}^m \otimes \mathbb{C}^n)$ and $S=\Sym(\bigwedge^2 \mathbb{C}^n)$, respectively. When $S=\Sym(\Sym^2(\mathbb{C}^n))$, the corresponding decomposition of $\Ext$ module is still unknown, see also \cite[Remark 3.8]{Perlman} and \cite[Remark 2.7]{RaicuWeyman2}. However, we have seen the unexpected connection between $\epsilon^j(I_n)$ (resp. $\epsilon^j(\Pf_{2n})$) and the degree of Grassmannian (resp. Orthogonal Grassmannian). Therefore it is natural to ask the following questions:  

\begin{question}
	Let $S=\Sym(\Sym^2(\mathbb{C}^n))$. Let $\mathcal{S}_p$ be the ideal generated by $p \times p$ symmetric minors of $S$. 
	\begin{enumerate}
		\item For which $j$ does $\Ext^j_S(S/\mathcal{S}_{n-1}^D,S)$ have finite length?
		\item Suppose $\ell(\Ext^j_S(S/\mathcal{S}_{n-1}^D,S)) < \infty$ and nonzero. Do $\epsilon^j(\mathcal{S}_{n-1})$ and $\frakj^j(\mathcal{S}_{n-1})$ exist?
		\item Suppose $\epsilon^j(\mathcal{S}_{n-1})$ or $\frakj^j(\mathcal{S}_{n-1})$ exists. Can we identify it with the degree of Lagrangian Grassmannian?
	\end{enumerate}
\end{question}

Moreover, consider the results in \cite{JeffriesMontanoVarbaro}, we ask:
\begin{question}	
	\begin{enumerate}
		\item Are there any geometric interpretations for $\epsilon^0(I_p)$, $\epsilon^0(\Pf_{2p})$ and $\epsilon^0(\mathcal{S}_{p})$?
		\item Are there any geometric interpretations for $\frakj^0(I_p)$, $\frakj^0(\Pf_{2p})$ and $\frakj^0(\mathcal{S}_{p})$?
	\end{enumerate}
\end{question}

\section{Acknowledgement}
The author would like to thank her advisor Wenliang Zhang, for suggesting this problem and his guidance throughout the preparation of this paper, and Tian Wang for helpful suggestions.

\end{document}